\crefname{equation}{}{}
\newcommand{\ie}{i.e., }
\newcommand{\argmin}{\mathop{\arg\min}}
\newcommand{\Ttran}{\mathsf{T}}
\newcommand{\de}{\,\mathrm{d}}
\newcommand{\Rq}{\mathrm{Rq}}
\newcommand{\Ly}{\mathcal{L}}
\newcommand{\proj}{\Pi}
\newcommand{\defi}{\equiv}
\newcommand{\normml}{\left\vert\kern-0.25ex\left\vert\kern-0.25ex\left\vert}
\newcommand{\normmr}{\right\vert\kern-0.25ex\right\vert\kern-0.25ex\right\vert}
\newcommand{\order}{\mathcal{O}}
\newcommand{\zero}{\mathbf{0}}
\newcommand{\R}{\mathbb{R}}
\newcommand{\Sp}{\mathcal{S}_{+}^{n-1}}
\newcommand{\fig}{pdf}
\newcommand{\figsizeF}{0.24\textwidth}
\providecommand{\spa}[1]{\mathrm{span}\{#1\}}
\providecommand{\abs}[1]{\lvert#1\rvert}
\providecommand{\norm}[1]{\lVert#1\rVert}
\providecommand{\dual}[1]{\langle#1\rangle}
\providecommand{\bignorm}[1]{\bigl\lVert#1\bigr\rVert}
\providecommand{\Bignorm}[1]{\Bigl\lVert#1\Bigr\rVert}
\newtheorem{lemma}{Lemma}[section]
\newtheorem{theorem}{Theorem}[section]
\newtheorem{corollary}{Corollary}[section]
\newtheorem{remark}{Remark}[section]
\newtheorem{proposition}{Proposition}[section]
\numberwithin{equation}{section}
\begin{document}

\title{EPIC: a provable accelerated Eigensolver based on Preconditioning 
and Implicit Convexity}
%\thanks{Submitted to the editors \today.
%W.B. Chen is supported by the National Natural Science Foundation 
%of China (NSFC) 12071090. 
%Z.J.Bai is supported by National Science Foundation (NSF) grant DMS-1913364.}
\author{
Nian~Shao\thanks{Institute of Mathematics, EPF Lausanne, 1015 Lausanne, Switzerland (\url{nian.shao@epfl.ch}). 
%Part of this work was performed when Shao was at School of Mathematical Sciences, Fudan University.
}
\and
Wenbin~Chen\thanks{School of Mathematical Sciences and Shanghai Key Laboratory for Contemporary Applied Mathematics, Fudan University,  Shanghai, 200433, P. R. China (\url{wbchen@fudan.edu.cn}).}
\and
Zhaojun~Bai\thanks{Department of Computer Science and
Department of Mathematics, University of California, Davis, CA 95616, USA (\url{zbai@ucdavis.edu}).}
}

%\headers{Eigensolver based on Preconditioning and Implicit Convexity (EPIC)}{Nian Shao, Wenbin Chen and Zhaojun Bai}
\date{}

\maketitle

\begin{abstract}
	This paper is concerned with the extraction of the smallest eigenvalue and the corresponding eigenvector of a symmetric positive definite matrix pencil. 
    We reveal implicit convexity of the eigenvalue problem in Euclidean space.
	A provable accelerated eigensolver based on preconditioning and implicit convexity (EPIC) is proposed. 
	Theoretical analysis shows the acceleration of EPIC with the rate of convergence resembling the expected rate of convergence of the well-known locally optimal preconditioned conjugate gradient (LOPCG). A complete proof of the expected rate of convergence of LOPCG is elusive so far.  Numerical results confirm our theoretical findings of EPIC.
\end{abstract}
\noindent \textbf{Keywords.}
Eigenvalue problem, convexity, preconditioning, acceleration.

\medskip

% REQUIRED
\noindent \textbf{MSC Codes.}
15A08, 65F08, 65F15, 90C25
%\begin{keywords}
%Eigenvalue problem, convergence analysis, preconditioning, convexity, acceleration.
%\end{keywords}
%
%\begin{AMS}
%15A08, 65F08, 65F15, 90C25.
%\end{AMS}

%\newpage
%\tableofcontents 
%\newpage

\section{Introduction}
Eigenvalue problems are cornerstones in scientific and engineering computations. In this paper, we consider the following generalized eigenvalue problem:
\begin{equation} \label{evp}
	Au=Mu\lambda,
\end{equation}
where $A$ and $M$ are given $n \times n$ symmetric positive definite matrices, and $(\lambda, u)$ is a desired eigenpair.
Numerous algorithms for computing eigenvalues and their associated eigenvectors have been developed \cite{Bai2000,Parlett1998,Saad2011,Wilkinson1965,Golub2013}. 
Preconditioning techniques are often necessary for large scale problems and have been well-studied for solving linear systems of equations \cite{Wathen2015,Benzi2002}.
For eigenvalue problems, preconditioning has also been investigated extensively.
There are classical preconditioned gradient-type eigensolvers, such as
the preconditioned steepest descent method \cite{Samokish1958,Neymeyr2012,Zhou2019,Zhou2023}
and the preconditioned gradient-type method \cite{Knyazev1994,Knyazev1998}.
The convergence analysis of these gradient-type eigensolvers are studied in \cite{Dyakonov1996,Knyazev2003,Ovtchinnikov2006,Argentati2017} and the references therein.
%In numerical PDE, the MultiGrid (MG) method \cite{Bramble2019} and the Domain Decomposition (DD) method \cite{Toselli2005} are the two most popular preconditioner when solving linear systems. There are also some researches about \Red{solving eigenvalue problems by MG or DD methods}, such as \cite{Xu2001,Xu2002,Hu2011,Yang2011,Wang2018,Wang2019,Shao2023,Chen2022}.
One of the most popular preconditioned iterative method for the eigenvalue problem \Cref{evp} is the Locally Optimal Block Preconditioned Conjugate Gradient (LOBPCG) method \cite{Knyazev2001}.
Due to the use of a momentum term, the convergence of LOBPCG is significantly accelerated under careful implementations \cite{Duersch2018,Knyazev2007}.
Despite its great success in practices, the complete proof  of the expected rate of convergence and acceleration of LOBPCG in \cite[(5.5)]{Knyazev2001} is still elusive. 
%\Red{To the best of our knowledge, the only theoretical result for the superiority of LOBPCG over PSD is \cite{Ovtchinnikov2008}, where the acceleration is not proved.}

There are preconditioned eigensolvers with momentum from the perspectives of differential equations, see \cite{Chen2022} and references therein. Numerical results show that adding a momentum term can significantly improve the the rate of convergence, but theoretically, the acceleration is hard to prove.
%\bigskip

Momentum methods are widely used in convex optimization, which can date back to early 1960s \cite{Polyak1964}.
A popular  momentum method is the Nesterov Accelerated Gradient (NAG)
flow \cite{Nesterov1983}. 
%Theoretical analysis of the NAG flow is very important, and there are many researches about it. 
From the theoretical analysis of the convergence of NAG flows, the classical technique was the estimating sequences \cite{Nesterov2018}. 
Recently, a second-order ordinary differential equation (ODE) was derived in \cite{Su2014} to study the dynamic of NAG flows.
The connection between NAG flows and the ODEs has been studied extensively 
in the last few years \cite{Shi2021,Muehlebach2021,Luo2021,Odonoghue2015}.
For example, by combining the NAG flow with the preconditioning technique,
a preconditioned accelerated gradient descent methods for solving nonlinear PDEs was proposed in \cite{Park2021}.

%\bigskip
 
The crux of the great success of the NAG flow approach is the convexity of objective function. 
Unfortunately, for the eigenvalue problem \eqref{evp}, the associated Rayleigh quotient 
\begin{equation*}
\Rq(x)=\frac{x^{\Ttran}Ax}{x^{\Ttran}Mx}
\end{equation*}
is not (strongly) convex in  Euclidean space, due to the homogeneity $\Rq(tx)=\Rq(x)$ for all $t\neq0$.
One way to explore the convexity in eigenvalue computation is to consider the Rayleigh quotient on smooth manifolds \cite{Edelman1998,Alimisis2022}. Recently, a Riemannian Acceleration with Preconditioning (RAP) is proposed in \cite{Shao2023b}. It is an accelerated preconditioned eigensolver with rigorous proofs of the convergence and acceleration. 
Although the convexity structure on Riemannian manifolds is well--studied, the analysis of preconditioning is involved.
Besides the spectral condition number $\kappa(T^{-1}A)$ in \cite{Knyazev2003}, where $T$ is the symmetric positive preconditioner for $A$, some extra technical conditions for preconditioners, such as the leading angle, are required for the acceleration due to the operations on manifolds.
Even though extra conditions can be verified for some popular preconditioners, such as the domain decomposition, it would be better if the acceleration can be obtained with only some requirements about the spectral condition number.
One possible strategy is exploring the implicit convexity structure in Euclidean space as we will pursue in this work.

\paragraph{Contributions.}
In this paper, we reveal implicit convexity of the eigenvalue problem \eqref{evp} with respect to the smallest eigenvalue and the corresponding eigenvector. 
Compared with the treatment of geodesically convexity, the implicit convexity only involves analysis in Euclidean space as commonly encountered in matrix computations.  
A provable accelerated symmetric Eigensolver based on Preconditioning and Implicit Convexity (EPIC) will be proposed. Rigorous theoretical analysis of EPIC is presented and shows that the rate of convergence resembles the ``expectation'' of LOBPCG in \cite[(5.5)]{Knyazev2001}. Numerical results confirm our theoretical study.

\paragraph{Characterizations and condition number 
of strongly convex functions.}
For easy of reference, the following proposition provides the
characterizations of strongly convex functions. The proofs can be found in \cite[Chap~2.1]{Nesterov2018}.
Taking into the account of preconditioning to be discussed in this paper,  we consider a $P$-inner-product
\begin{equation} \label{eq:rmkip}
\dual{x,y}_{P}=x^{\Ttran}Py,
\end{equation}
where $P$ is a symmetric positive definite matrix. For simplicity, we use $\dual{\cdot,\cdot}$ and $\norm{\cdot}$ to denote a general inner--product and norm, which may be the $P$ inner--product and $P$ norm.

\begin{proposition} \label{prop.convexcharac} 
Suppose $\phi$ is a smooth function on a convex domain $\mathcal{Y}$,
and $0<\mu\leq L$ are positive scalars, the following three inequalities for characterizing the strongly convexity of $\phi$ are equivalent:
\begin{align}
& \frac{\mu}{2}\norm{y_{1}-y_{2}}^{2} \leq
\phi(y_{1})-\phi(y_{2})-\dual{\nabla\phi(y_{2}),y_{1}-y_{2}}\leq
\frac{L}{2}\norm{y_{1}-y_{2}}^{2}    \label{cvxmuL1} \\
& \mu\norm{y_{1}-y_{2}}\leq \norm{\nabla\phi(y_{1})-\nabla\phi(y_{2})}
\leq L\norm{y_{1}-y_{2}},  \label{cvxmuL2} \\
& \mu P \preceq \nabla^{2}\phi(y) \preceq LP, \label{cvxmuL3}
\end{align}
where $y, y_{1}$, $y_{2}\in \mathcal{Y}$.
%we use $\dual{\cdot,\cdot}$ and $\norm{\cdot}$ to denote a general inner-product in \eqref{eq:rmkip} and its associated norm. 
\end{proposition}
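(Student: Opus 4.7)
The plan is to prove the cycle $(3)\Rightarrow(1)\Rightarrow(2)\Rightarrow(3)$, working throughout in the $P$-inner product $\dual{\cdot,\cdot}_{P}$ so that $\nabla\phi$ and $\nabla^{2}\phi$ denote the gradient and Hessian with respect to this inner product. The whole argument rests on two standard calculus identities: the second-order Taylor formula with integral remainder for $\phi$, and the first-order fundamental theorem of calculus applied to $\nabla\phi$.

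For $(3)\Rightarrow(1)$, I would use the second-order Taylor identity
\[
\phi(y_{1})-\phi(y_{2})-\dual{\nabla\phi(y_{2}),y_{1}-y_{2}}=\int_{0}^{1}(1-t)\dual{\nabla^{2}\phi(y_{t})(y_{1}-y_{2}),y_{1}-y_{2}}\,dt,
\]
with $y_{t}=y_{2}+t(y_{1}-y_{2})$; plugging the Loewner sandwich (3) into the integrand and using $\int_{0}^{1}(1-t)\,dt=\tfrac{1}{2}$ recovers both bounds of (1). For $(1)\Rightarrow(2)$, the $\mu$-lower bound is immediate: adding (1) to itself with $y_{1}$ and $y_{2}$ swapped yields the monotonicity estimate $\dual{\nabla\phi(y_{1})-\nabla\phi(y_{2}),y_{1}-y_{2}}\ge\mu\norm{y_{1}-y_{2}}^{2}$, and one Cauchy--Schwarz produces $\norm{\nabla\phi(y_{1})-\nabla\phi(y_{2})}\ge\mu\norm{y_{1}-y_{2}}$. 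The $L$-upper bound is the nontrivial piece, and I would handle it by Nesterov's auxiliary-function trick: applying the $L$-upper half of (1) at the trial point $z=y_{2}-\tfrac{1}{L}(\nabla\phi(y_{2})-\nabla\phi(y_{1}))$ produces the co-coercivity inequality $\dual{\nabla\phi(y_{1})-\nabla\phi(y_{2}),y_{1}-y_{2}}\ge\tfrac{1}{L}\norm{\nabla\phi(y_{1})-\nabla\phi(y_{2})}^{2}$, from which Cauchy--Schwarz delivers the Lipschitz bound $\norm{\nabla\phi(y_{1})-\nabla\phi(y_{2})}\le L\norm{y_{1}-y_{2}}$.

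To close the cycle with $(2)\Rightarrow(3)$, I would set $y_{1}=y+sd$, $y_{2}=y$ for a fixed direction $d\ne 0$, divide (2) by $s$, and send $s\to 0^{+}$ to obtain $\mu\norm{d}\le\norm{\nabla^{2}\phi(y)d}\le L\norm{d}$; combined with the positive semi-definiteness of $\nabla^{2}\phi(y)$ already inherited from the convexity secured through (1), polarization upgrades these pointwise norm bounds to the Loewner sandwich in (3). The main obstacle of the whole proof is the $L$-upper bound in $(1)\Rightarrow(2)$, since naively differentiating (1) loses information and Nesterov's auxiliary-function argument is essential; the other implications are routine applications of the Taylor identities above.
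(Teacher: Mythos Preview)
The paper does not give its own proof of this proposition; it simply cites Chapter~2.1 of Nesterov's textbook, so there is nothing in-paper to compare against and your argument must stand on its own.

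Your step $(2)\Rightarrow(3)$ has a genuine gap. You write that the sign of $\nabla^{2}\phi(y)$ is ``already inherited from the convexity secured through (1)'', but in a cycle $(3)\Rightarrow(1)\Rightarrow(2)\Rightarrow(3)$ you may not invoke $(1)$ while deducing $(3)$ from $(2)$; that is circular. And the gap is not cosmetic: condition \eqref{cvxmuL2} alone does \emph{not} imply \eqref{cvxmuL3}. The function $\phi(y)=-\tfrac{\mu}{2}\norm{y}_{P}^{2}$ satisfies \eqref{cvxmuL2} with $\mu=L$ yet has $\nabla^{2}\phi=-\mu P$. Your limiting argument only yields $\mu\norm{d}\le\norm{\nabla^{2}\phi(y)d}\le L\norm{d}$, which pins the eigenvalues of the (symmetric) Hessian to $[-L,-\mu]\cup[\mu,L]$ with no sign information. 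So either the proposition needs an extra convexity hypothesis for the direction $(2)\Rightarrow(3)$, or the ``equivalence'' should be read as the one-way chains $(3)\Rightarrow(1)$ and $(3)\Rightarrow(2)$ --- which are in fact the only directions the paper ever uses (it establishes \eqref{cvxmuL3} in \Cref{convexP} and then consumes \eqref{cvxmuL1} and \eqref{cvxmuL2} downstream), and which your Taylor arguments handle correctly.

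A smaller issue: in $(1)\Rightarrow(2)$, Nesterov's auxiliary-function trick evaluates $\phi$ at the point $z=y_{2}-\tfrac{1}{L}\bigl(\nabla\phi(y_{2})-\nabla\phi(y_{1})\bigr)$, but nothing guarantees $z\in\mathcal{Y}$ when $\mathcal{Y}$ is a proper convex subset --- precisely the situation for the auxiliary problem \eqref{auxprob}. A domain-safe route is to go through $(3)$: the upper half of \eqref{cvxmuL1} gives $\nabla^{2}\phi\preceq LP$ via a second-order Taylor limit along a short segment inside $\mathcal{Y}$, and then integrating $\nabla^{2}\phi$ along the segment from $y_{2}$ to $y_{1}$ (which stays in $\mathcal{Y}$ by convexity) yields the Lipschitz bound in \eqref{cvxmuL2}.
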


By the convention in convex optimization \cite[P.77]{Nesterov2018},
the condition number of a strongly convex function $\phi$ is denoted by the ratio $\kappa=L/\mu$, where $L$ and $\mu$ come from \Cref{prop.convexcharac}. The condition number is closely tied to fundamental properties of algorithms. For examples, the rate of convergence of the gradient descent method and accelerated gradient descent method are bounded as $1-c\kappa$ and $1-c\kappa^{1/2}$ respectively for unconstrained convex minimization, where $c$ is some positive constant \cite[Chap~2.1]{Nesterov2018}.

\paragraph{Paper organization.}
In \Cref{sec:impconv}, we introduce the implicit convexity of the smallest eigenvalue problem by constructing an auxiliary problem on the tangent plane of an approximation of eigenvector on the $M$--sphere. 
A novel Locally Optimal scheme of Nesterov Accelerated Gradient (LONAG) flow will be proposed and analyzed in \Cref{sec:nagflow}.
In \Cref{sec:eic}, we will show that the auxiliary problem can be solved by LONAG implicitly on the $M$--sphere, which only involves some cheap operations. Such an implicit algorithm will be named as Eigensolver based on Implicit Convexity (EIC). Compared with steepest descent, the acceleration of EIC will be proved.
In \Cref{sec:epic}, a preconditioned version of EIC, which is called Eigensolver based on Preconditioning and Implicit Convexity (EPIC), will be given by involving a preconditioner $P$, which is associated with the co--preconditioner $T$ for $A$, for the auxiliary problem. Theoretical analysis show that EPIC can achieve acceleration, whose rate of convergence is faster than PSD and similar to the ``expectation'' of LOPCG.
Numerical results, including test for theoretical results and comparison with LOPCG will be given in \Cref{sec:numerics}.

\paragraph{Notation.}
We use $\dual{x,y}_{A}$ to represent the inner-product $x^{\Ttran}Ay$, where $A$ is a symmetric positive definite matrix, and $\norm{x}_{A}$ to represent its corresponding norm.
For the standard inner-product and norm in  Euclidean space, we use $\dual{\cdot,\cdot}$ and $\norm{\cdot}$ respectively.
For a symmetric positive definite pencil $(A,M)$, the notations $\lambda_{\min}(A,B)$ and $\lambda_{\max}(A,B)$ are used to represent the minimum and maximum generalized eigenvalue of $(A,B)$, respectively.
The notation $M_{1}\preceq M_{2}$ means $M_{2}-M_{1}$ is a symmetric semi-positive definite matrix.

%----------------------

%\newpage
\section{Implicit convexity of the symmetric eigenvalue problem}
\label{sec:impconv}

\subsection{The eigenvalue problem}
Suppose $A$ and $M$ are $n \times n$ symmetric positive definite matrices,
and $0<\lambda_{1}<\lambda_{2}\leq\dotsb\leq \lambda_{n}$
are eigenvalues of $(A,M)$ and $u_{1},\dotsc,u_{n}$ are the corresponding
unit eigenvectors, \ie $u_{i}^{\Ttran}Mu_{j}=0$ for $i\neq j$ and
$\norm{u_{i}}_{M}=1$.
%(for example, see Theorem 15.3.3 in \cite{Parlett1998}).
We consider the computation of the smallest eigenvalue and 
associated eigenvector $(\lambda_1, u_1)$ of $(A,M)$:
\begin{equation} \label{oriprob}
Au_{1}=Mu_{1}\lambda_{1}.
\end{equation}
It is well-known \cite{Golub2013} that
$u_{1}$ is the unique minimizer of the Rayleigh quotient:
\begin{equation*} 
	u_{1}=\argmin_{u\neq\zero}\Rq(u)\defi
	\argmin_{u\neq\zero}\frac{\norm{u}_{A}^{2}}{\norm{u}_{M}^{2}}.
\end{equation*}

\subsection{The auxiliary problem}
In this section, we will construct an auxiliary problem of
the eigenvalue problem~\eqref{oriprob}
and then convert the eigenvalue problem~\eqref{oriprob}
into an optimization problem of a convex function over a convex domain.
%\bigskip
Let $q$ be an approximation of the eigenvector $u_{1}$ satisfying
$q^{\Ttran}Mu_{1}>0$,\footnote{both $q$ and $-q$ are approximations
	of $u_{1}$} $\norm{q}_{M}=1$, and
\begin{equation} \label{eq:rhoq1}
\lambda_{1}\leq \rho_{q}\defi \Rq(q) < \frac{\lambda_{1}+\lambda_{2}}{2}.
\end{equation}
Let $\Sp$ be the hemisphere in $\R^{n}$:
\begin{equation*}
	\Sp\defi
	\bigg\{x\in\R^{n}\Bigm\vert \norm{x}_{M}=1,\,q^{\Ttran}Mx>0\bigg\}.
\end{equation*}
Define an $A$-spherical cap $\mathcal{X}$ of $\Sp$ as\footnote{$\mathcal{X}$ is a spherical cap defined by $A$-norm.}
\begin{equation} \label{eq:Xdef}
\mathcal{X}
= \bigg\{x\in\Sp \Bigm\vert \Rq(x)\leq\rho_{q} \bigg\} \subset \Sp.
\end{equation}
It is obvious that $\mathcal{X}$ is nonempty
since $u_{1}\in\mathcal{X}$.
Define operators $\psi\colon \Sp \mapsto \R^{n - 1}$ and $\psi^{\dagger}\colon \R^{n - 1} \mapsto \Sp$ as
\begin{equation} \label{defpsi}
	\psi (x)\defi \frac{Q^{\Ttran}Mx}{q^{\Ttran}Mx}\quad\text{and}\quad
	\psi^{\dagger}(y)\defi \frac{Qy+q}{\norm{Qy+q}_{M}}.
\end{equation}
where $Q$ is an $M$-orthogonal complement of the vector $q$, \ie
$\widetilde{Q}=[q,Q]$ is an $M$-orthogonal matrix.
The operators $\psi$ and $\psi^{\dagger}$ are well-defined,
\ie the denominators of $\psi$ and $\psi^{\dagger}$ are nonzero,
since $q$ and $Q$ are $M$-orthogonal.
Define the projected $A$-spherical cap $\mathcal{Y}$
of $\mathcal{X}$ as\footnote{$\mathcal{Y}$ is
the projection of $\mathcal{X}$.}
\begin{equation} \label{eq:Ydef}
\mathcal{Y} \defi
\bigg\{y\in\R^{n - 1}\Bigm\vert y=\psi(x),\,x\in\mathcal{X}\bigg\}.
\end{equation}
In \Cref{lemPsi},
it will be shown that $\psi^{\dagger}$ is the inverse of $\psi$ and
$\mathcal{Y} = \psi(\mathcal{X})$.

\paragraph*{Geometric interpretations.}
The tangent space of $\Sp$ at $q$ with respect to $M$-inner-product is
\begin{equation*}
\bigg\{Qy+q  \Bigm\vert y\in\R^{n - 1}\bigg\}\subset \R^{n}.
\end{equation*}
For any $x\in\Sp$,
\begin{equation} \label{eq:psiq}
Q\psi(x)+q = \frac{QQ^{\Ttran}Mx+qq^{\Ttran}Mx}{q^{\Ttran}Mx}
=\frac{x}{q^{\Ttran}Mx},
\end{equation}
where we use the fact that $\widetilde{Q} =[q,Q]$ is $M$-orthogonal.
Therefore, $Q\psi(x)+q$ is a projection of $x \in \mathcal{X}$
onto the tangent space at $q$. The operator $\psi$ maps a point $x\in\mathcal{X}$ to
the coordinates of its projection in the tangent space with the basis $Q$.
$Q\mathcal{Y}+q$ is the projection of $\mathcal{X}$ from the origin.
The relationship of $\Sp$, $\mathcal{X}$, $\mathcal{Y}$, $q$,
$u_{1}$ and $\psi(u_{1})$ is illustrated in \Cref{figq}.

\begin{figure}[t]
\centering
%\subfloat[2D case]{
\includegraphics[width=0.5\textwidth]{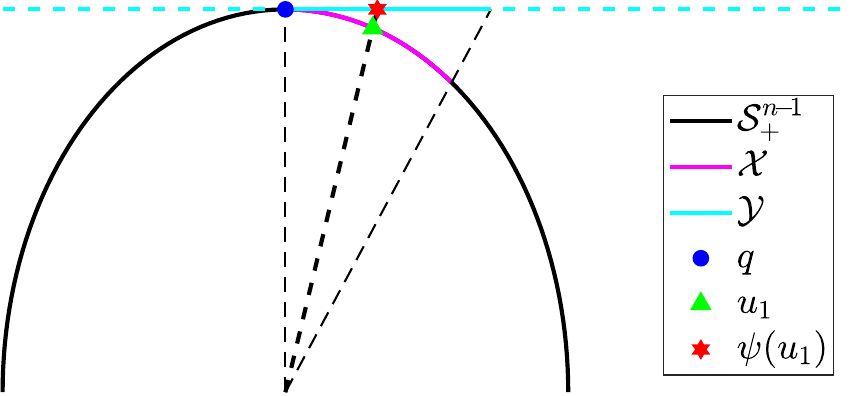}
 %}
%\subfloat[3D case]{
%\includegraphics[width=0.5\textwidth]{}}
\caption{The relationship of $\Sp$, $\mathcal{X}$, $\mathcal{Y}$,
$q$, $u_{1}$ and $\psi(u_{1})$}
\label{figq}
\end{figure}

\paragraph*{Definition of the auxiliary problem.}
Let $\phi\colon \R^{n - 1}\mapsto \R$ be defined by
\begin{equation} \label{defphi}
	\phi(y) \defi \Rq(Qy+q)
	= \frac{ y^{\Ttran}{B}y+2y^{\Ttran}b+\rho_{q}}{\norm{y}^{2}+1},
\end{equation}
where $B=Q^{\Ttran}AQ$, $b=Q^{\Ttran}Aq$ and $\rho_{q} = \Rq(q)$.
It is obvious that $\phi$ is a smooth function of $y$.
An auxiliary problem of the eigenvalue problem~\eqref{oriprob}
is defined by
\begin{equation} \label{auxprob}
	\min_{y\in\mathcal{Y}}\, \phi(y).
\end{equation}
In the rest of this section, we will show that
if $\rho_{q}$ is chosen sufficiently close to $\lambda_1$, the auxiliary function $\phi$ is strongly convex on a convex region $\mathcal{Y}$. 
Consequently, using the theory of convex optimization \cite[Thm~2.4]{Nocedal2006} and the property of $\psi^{\dagger}$ in \Cref{lemPsi}, we can conclude that the auxiliary problem \Cref{auxprob} has a unique solution $y_{*}$, and the eigenvector $u_{1}$ of the eigenvalue problem~\eqref{oriprob}
is given by $u_{1} = \psi^{\dagger}(y_{*})$.

\subsection{Properties of $\psi$ and $\psi^{\dagger}$}

We have the following lemma
on the properties of operators $\psi$ and $\psi^{\dagger}$
defined in \Cref{defpsi}.

\begin{lemma} \label{lemPsi}
For operators $\psi$ and $\psi^{\dagger}$ defined in \Cref{defpsi},
\begin{enumerate}
\item $\psi$ and $\psi^{\dagger}$ are injections,

\item $\psi^{\dagger}\bigl(\psi(x)\bigr)=x$ holds for all $x\in\Sp$,

\item $\psi\bigl(\psi^{\dagger}(y)\bigr)=y$ holds for all $y\in\R^{n - 1}$.
\end{enumerate}
Therefore, $\psi^{\dagger}$ is the inverse of $\psi$, $\mathcal{Y} = \psi(\mathcal{X})$ and $\mathcal{X} = \psi^{\dagger}(\mathcal{Y})$.
\end{lemma}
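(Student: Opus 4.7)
The plan is to verify claims 2 and 3 first by direct computation, and then deduce claim 1 as an immediate corollary. The key identity I will lean on is the $M$-orthogonality of $\widetilde{Q} = [q, Q]$, which gives $q^{\Ttran}Mq = 1$, $Q^{\Ttran}Mq = \zero$, and $Q^{\Ttran}MQ = I_{n-1}$.

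For claim 2, I would start from the identity \eqref{eq:psiq} already established in the excerpt, namely $Q\psi(x) + q = x/(q^{\Ttran}Mx)$ for $x \in \Sp$. Taking the $M$-norm of both sides yields $\norm{Q\psi(x)+q}_{M} = \norm{x}_{M}/\abs{q^{\Ttran}Mx} = 1/(q^{\Ttran}Mx)$, where I drop the absolute value using $\norm{x}_{M} = 1$ and $q^{\Ttran}Mx > 0$ (the two defining conditions of $\Sp$). Plugging this into the definition of $\psi^{\dagger}$ gives $\psi^{\dagger}(\psi(x)) = \bigl(x/(q^{\Ttran}Mx)\bigr)\big/\bigl(1/(q^{\Ttran}Mx)\bigr) = x$.

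For claim 3, I would let $y \in \R^{n-1}$, set $z = Qy + q$, and expand $\psi(\psi^{\dagger}(y)) = \psi(z/\norm{z}_{M})$. Since $\psi$ is invariant under positive scaling (the scalar $\norm{z}_M$ cancels between the numerator and denominator), this reduces to $Q^{\Ttran}Mz \bigm/ q^{\Ttran}Mz$. The $M$-orthogonality relations then give $Q^{\Ttran}Mz = Q^{\Ttran}MQ\,y + Q^{\Ttran}Mq = y$ and $q^{\Ttran}Mz = q^{\Ttran}MQ\,y + q^{\Ttran}Mq = 1$, so the quotient is $y$. One small sanity check: I should verify that $\psi^{\dagger}(y)$ genuinely lands in $\Sp$ (so that $\psi$ can be applied), i.e.\ that $q^{\Ttran}M\psi^{\dagger}(y) > 0$; this follows from $q^{\Ttran}Mz = 1 > 0$ and the positivity of $\norm{z}_M$.

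Claim 1 then drops out for free: if $\psi(x_1) = \psi(x_2)$ with $x_1, x_2 \in \Sp$, applying $\psi^{\dagger}$ to both sides and using claim 2 gives $x_1 = x_2$, and symmetrically claim 3 gives injectivity of $\psi^{\dagger}$. The closing statements $\mathcal{Y} = \psi(\mathcal{X})$ and $\mathcal{X} = \psi^{\dagger}(\mathcal{Y})$ follow from the definition of $\mathcal{Y}$ in \eqref{eq:Ydef} together with claim 2 applied pointwise on $\mathcal{X} \subset \Sp$. Overall this lemma is a routine verification rather than a deep result; the only point requiring some care is making sure the sign condition $q^{\Ttran}Mx > 0$ is used correctly to remove the absolute value in the norm computation of claim 2, which is what makes the construction a well-defined bijection between $\Sp$ and $\R^{n-1}$.
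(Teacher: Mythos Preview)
Your proof is correct. The main difference from the paper is the order of the argument: the paper proves injectivity (item~1) first by direct algebraic manipulation, then proves item~2 by direct computation, and finally derives item~3 from items~1 and~2 via $\psi^{\dagger}\bigl(\psi(\psi^{\dagger}(y))\bigr)=\psi^{\dagger}(y)$ together with injectivity of $\psi^{\dagger}$. You instead verify the two inverse identities (items~2 and~3) first and obtain injectivity for free. Your route is slightly more economical, since the existence of a one-sided inverse immediately yields injectivity without the separate coordinate computations the paper carries out for item~1; conversely, the paper's direct injectivity argument makes no appeal to~\eqref{eq:psiq}. Either way the content is the same elementary use of the $M$-orthogonality relations $q^{\Ttran}Mq=1$, $Q^{\Ttran}Mq=\zero$, $Q^{\Ttran}MQ=I_{n-1}$, and your handling of the sign condition $q^{\Ttran}Mx>0$ is exactly the point that needs care.
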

\begin{proof}
For item 1: For any $x_{1},\,x_{2}\in\Sp$, if $\psi(x_{1})=\psi(x_{2})$, we have
\begin{equation*}
Q^{\Ttran}M\Bigl(\frac{x_{1}}{q^{\Ttran}Mx_{1}}-\frac{x_{2}}{q^{\Ttran}Mx_{2}}\Bigr)=0.
\end{equation*}
By the $M$-orthogonality of $q$ and $Q$, there exists $\alpha\in\R$ such that
\begin{equation*}
\frac{x_{1}}{q^{\Ttran}Mx_{1}}-\frac{x_{2}}{q^{\Ttran}Mx_{2}}=\alpha q.
\end{equation*}
Multiplying $q^{\Ttran}M$ on the left of both sides in this equation, we know that $\alpha=0$, \ie
\begin{equation*}
x_{1}=\frac{q^{\Ttran}Mx_{1}}{q^{\Ttran}Mx_{2}} x_{2}.
\end{equation*}
Then $x_{1}=x_{2}$ is obtained by $q^{\Ttran}Mx>0$ and $\norm{x}_{M}=1$ for all $x\in\Sp$.

	For $\psi^{\dagger}$, if $\psi^{\dagger}(y_{1})=\psi^{\dagger}(y_{2})$, we have
	\begin{equation*}
		Q\Bigl(\frac{y_{1}}{\norm{Qy_{1}+q}}-\frac{y_{2}}{\norm{Qy_{2}+q}}\Bigr)=\Bigl(\frac{1}{\norm{Qy_{2}+q}}-\frac{1}{\norm{Qy_{1}+q}}\Bigr)q.
	\end{equation*}
	Using the $M$-orthogonality of $q$ and $Q$, we know that $y_{1}=y_{2}$.

	For item 2, by direct computation, for any $x\in\Sp$,
	\begin{equation*}
		\psi^{\dagger}\bigl(\psi(x)\bigr)=\dfrac{\dfrac{QQ^{\Ttran}Mx}{q^{\Ttran}Mx}+q}{\Bignorm{\dfrac{QQ^{\Ttran}Mx}{q^{\Ttran}Mx}+q}_{M}}=\dfrac{\dfrac{x-qq^{\Ttran}Mx}{q^{\Ttran}Mx}+q}{\Bignorm{\dfrac{x-qq^{\Ttran}Mx}{q^{\Ttran}Mx}+q}_{M}}=x,
	\end{equation*}
	because of $q^{\Ttran}Mx>0$ and $qq^{\Ttran}M+QQ^{\Ttran}M=I$.

	For item 3, for any $y\in\R^{n - 1}$, by $\psi^{\dagger}\bigl(\psi(x)\bigr)=x$, we know
	\begin{equation*}
		\psi^{\dagger}\Bigl(\psi\bigl(\psi^{\dagger}(y)\bigr)\Bigr)=\psi^{\dagger}(y).
	\end{equation*}
	Then $\psi\bigl(\psi^{\dagger}(y)\bigr)=y$ is obtained by $\psi$ is an injection.
\end{proof}

The following proposition establishes the connection between
the Rayleigh quotient $\Rq(\cdot)$ and the auxiliary function $\phi(\cdot)$.

\begin{proposition} \label{eqRqPhi}
Let $x\in\Sp$ and $y=\psi(x)$. Then
\begin{equation} \label{eq:rqxphiy}
\Rq(x) = \phi(y).
\end{equation}
\end{proposition}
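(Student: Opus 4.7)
The plan is to exploit the homogeneity of the Rayleigh quotient, namely $\Rq(tx)=\Rq(x)$ for every nonzero scalar $t$, combined with the geometric identity already noted in \eqref{eq:psiq}.

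First, I would start from the definition \eqref{defphi}, which reads $\phi(y) = \Rq(Qy+q)$. Setting $y = \psi(x)$, this becomes
\begin{equation*}
\phi\bigl(\psi(x)\bigr) = \Rq\bigl(Q\psi(x)+q\bigr).
\end{equation*}
Next, I would invoke \eqref{eq:psiq}, which (by the $M$-orthogonality of $\widetilde{Q}=[q,Q]$) gives $Q\psi(x)+q = x/(q^{\Ttran}Mx)$. Since $x\in\Sp$ guarantees $q^{\Ttran}Mx>0$, this is a well-defined nonzero scalar multiple of $x$.

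Finally, applying the scale invariance $\Rq(tx)=\Rq(x)$ with $t=1/(q^{\Ttran}Mx)\neq 0$ yields
\begin{equation*}
\phi\bigl(\psi(x)\bigr) = \Rq\!\left(\tfrac{x}{q^{\Ttran}Mx}\right) = \Rq(x),
\end{equation*}
which is the desired identity \eqref{eq:rqxphiy}. There is essentially no obstacle here: the proof is a two-line chain of substitutions, and the only nontrivial ingredient is \eqref{eq:psiq}, which has already been established from the $M$-orthogonality of $[q,Q]$. The role of the conditions $q^{\Ttran}Mx>0$ and $\norm{q}_M=1$ is merely to ensure that the denominator $q^{\Ttran}Mx$ is strictly positive so that the homogeneity of $\Rq$ may be legitimately applied.
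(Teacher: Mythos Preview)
Your proof is correct and essentially the same as the paper's: both use the definition $\phi(y)=\Rq(Qy+q)$ and the homogeneity of the Rayleigh quotient. The only cosmetic difference is that the paper phrases the key step via \Cref{lemPsi} (writing $x=\psi^{\dagger}(y)$ and then using $\Rq(\psi^{\dagger}(y))=\Rq(Qy+q)$), whereas you invoke the identity \eqref{eq:psiq} directly; these are two ways of saying the same thing.
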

\begin{proof} It is a direct result from
\begin{equation*}
\Rq(x) = \Rq\bigl(\psi^{\dagger}(y)\bigr) = \Rq(Qy+q) = \phi(y),
\end{equation*}
where we use \Cref{lemPsi}, the homogeneity
of the Rayleigh quotient, and \Cref{defphi}, respectively.
\end{proof}

\subsection{Convexity of $\mathcal{Y}$}
We now show that $\mathcal{Y}$ is convex.

\begin{theorem} \label{lemaux}
Under condition \Cref{eq:rhoq1},
\begin{enumerate}
\item % for any $y\in\R^{n - 1}$, 
$\phi(y)\leq \rho_{q}$
if and only if $y\in\mathcal{Y}$,

\item the set $\mathcal{Y}$ is convex.
\end{enumerate}
\end{theorem}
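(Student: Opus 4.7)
The equivalence in Part~1 is a direct consequence of \Cref{eqRqPhi} combined with \Cref{lemPsi}. For the forward direction, if $y \in \mathcal{Y}$ then $y = \psi(x)$ for some $x \in \mathcal{X}$, and so $\phi(y) = \Rq(x) \leq \rho_{q}$. For the reverse direction, given $y$ with $\phi(y) \leq \rho_{q}$, set $x := \psi^{\dagger}(y) \in \Sp$; applying \Cref{eqRqPhi} together with $\psi(\psi^{\dagger}(y)) = y$ from \Cref{lemPsi} gives $\Rq(x) = \phi(y) \leq \rho_{q}$, hence $x \in \mathcal{X}$ and $y = \psi(x) \in \mathcal{Y}$.

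\textbf{Reduction for Part~2.} Clearing the positive denominator $\norm{y}^{2}+1$ in \Cref{defphi}, Part~1 rewrites the defining condition of $\mathcal{Y}$ as
\[
\mathcal{Y} = \bigl\{\,y \in \R^{n-1} : g(y) \defi y^{\Ttran}(B - \rho_{q} I) y + 2\, y^{\Ttran} b \leq 0\,\bigr\}.
\]
Using the $M$--orthogonality $Q^{\Ttran} M Q = I_{n-1}$, $Q^{\Ttran} M q = 0$ and $q^{\Ttran} A q = \rho_{q}$, I would identify $B - \rho_{q} I = Q^{\Ttran}(A - \rho_{q} M) Q$. Consequently, if one can establish $B - \rho_{q} I \succeq 0$, then $g$ is convex and its sublevel set $\mathcal{Y} = \{g \leq 0\}$ is convex, which finishes the proof.

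\textbf{Key estimate.} The task becomes showing $z^{\Ttran}(A - \rho_{q} M) z \geq 0$ for every $z$ with $z^{\Ttran} M q = 0$. Expand $q = \sum_{i} \alpha_{i} u_{i}$ and $z = \sum_{i} c_{i} u_{i}$ in the $(A,M)$--eigenbasis, so $\alpha_{1} > 0$, $\sum_{i} \alpha_{i}^{2} = 1$, and $\rho_{q} = \sum_{i} \alpha_{i}^{2} \lambda_{i}$. The inequality $\rho_{q} \geq \lambda_{2} - \alpha_{1}^{2}(\lambda_{2} - \lambda_{1})$ combined with $\rho_{q} < (\lambda_{1}+\lambda_{2})/2$ forces $\alpha_{1}^{2} > 1/2$. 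The constraint $\sum_{i} c_{i} \alpha_{i} = 0$ lets one eliminate $c_{1}$ and compute
\[
z^{\Ttran}(A - \rho_{q} M)z = \sum_{i \geq 2} c_{i}^{2} (\lambda_{i} - \rho_{q}) - \frac{\rho_{q} - \lambda_{1}}{\alpha_{1}^{2}} \Bigl(\sum_{i \geq 2} c_{i} \alpha_{i}\Bigr)^{\!2}.
\]
Bounding the cross term by Cauchy--Schwarz reduces the required inequality to $\alpha_{1}^{2}/(\rho_{q} - \lambda_{1}) \geq \sum_{i \geq 2} \alpha_{i}^{2}/(\lambda_{i} - \rho_{q})$, which closes out because each ratio $(\rho_{q} - \lambda_{1})/(\lambda_{i} - \rho_{q}) < 1$ (from $\rho_{q} < (\lambda_{1}+\lambda_{2})/2$ and $\lambda_{i} \geq \lambda_{2}$), so the right--hand side is below $1 - \alpha_{1}^{2} < \alpha_{1}^{2}$.

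\textbf{Main obstacle.} Part~1 is essentially bookkeeping around \Cref{eqRqPhi} and \Cref{lemPsi}. The substance lies in the semidefiniteness of $B - \rho_{q} I$: plain Cauchy interlacing only yields $\lambda_{\min}(B) \geq \lambda_{1}$, which is not sharp enough to reach $\rho_{q}$, so the strict gap condition $\rho_{q} < (\lambda_{1}+\lambda_{2})/2$ is indispensable and must be channeled through the spectral expansion of $q$ as above. Without this threshold, $\mathcal{Y}$ may lose convexity, so the estimate is genuinely delicate rather than cosmetic.
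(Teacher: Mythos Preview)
Your argument is correct and follows the same overall route as the paper: Part~1 via \Cref{eqRqPhi} and \Cref{lemPsi}, and Part~2 by rewriting $\mathcal{Y}$ as the sublevel set $\{g\leq 0\}$ of the quadratic $g(y)=y^{\Ttran}(B-\rho_{q}I)y+2y^{\Ttran}b$ and then establishing $B-\rho_{q}I\succeq 0$.

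The difference lies only in how the semidefiniteness of $B-\rho_{q}I$ is obtained. The paper invokes an external result, \cite[Lem~3.1]{Notay2002}, which gives the sharper bound $\lambda_{\min}(B)\geq \lambda_{1}+\lambda_{2}-\rho_{q}$; under \Cref{eq:rhoq1} this immediately yields $\lambda_{\min}(B)-\rho_{q}\geq \lambda_{1}+\lambda_{2}-2\rho_{q}>0$. The paper then completes the square to exhibit $\mathcal{Y}$ explicitly as an ellipsoid centered at $-(B-\rho_{q}I)^{-1}b$. Your spectral--expansion argument with Cauchy--Schwarz is a self-contained derivation of $B-\rho_{q}I\succeq 0$ (in fact $\succ 0$), essentially reproving the needed consequence of Notay's lemma from scratch, and then you conclude via convexity of sublevel sets of convex quadratics rather than identifying the ellipsoid. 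Your route is more elementary and avoids the external citation, at the cost of a somewhat longer computation; the paper's route is terser but has the advantage that the quantitative bound $\lambda_{\min}(B)\geq \lambda_{1}+\lambda_{2}-\rho_{q}$ is recorded as \Cref{eigB} and reused later in the proof of \Cref{convexP}, whereas your argument only delivers $\lambda_{\min}(B)\geq \rho_{q}$.
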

\begin{proof}
	For item 1, let $x=\psi^{\dagger}(y)$, by \Cref{eqRqPhi,eq:Xdef,eq:Ydef} we know
	\begin{equation*}
		\phi(y)\leq \rho_{q} \iff \Rq(x)\leq \rho_{q} \iff x\in\mathcal{X} \iff  y\in\mathcal{Y}.
	\end{equation*}
	For item 2, we consider an equivalent definition of $\mathcal{Y}$:
	\begin{equation*}
		\mathcal{Y}=\{y\in\R^{n - 1}\mid \phi(y)\leq \rho_{q}\}.
	\end{equation*}
	According to \cite[Lem~3.1]{Notay2002}, we know
	\begin{equation}
		\label{eigB}
		\lambda_{\min}(B) = \lambda_{\min}(Q^{\Ttran}AQ)\geq\lambda_{1}+\lambda_{2}-\rho_{q}.
	\end{equation}
	Combining \Cref{eigB,eq:rhoq1}, we have
	\begin{equation*}
		\lambda_{\min}(B)-\rho_{q}\geq \lambda_{1}+\lambda_{2}-2\rho_{q}>0,
	\end{equation*}
	which means $B-\rho_{q}I$ is a symmetric positive semi-definite matrix.
	Since
	\begin{equation}
		\label{cvxY}
		\begin{aligned}
			\phi(y)\leq \rho_{q} &\iff \frac{y^{\Ttran}By+2y^{\Ttran}b+\rho_{q}}{y^{\Ttran}y+1}\leq \rho_{q}\\
			&\iff y^{\Ttran}(B-\rho_{q}I)y+2y^{\Ttran}b\leq 0\\
			&\iff y^{\Ttran}(B-\rho_{q}I)y+2y^{\Ttran}(B-\rho_{q}I)z\leq 0\\
			&\iff (y+z)^{\Ttran}(B-\rho_{q}I)(y+z)\leq z^{\Ttran}(B-\rho_{q}I)z,
		\end{aligned}
	\end{equation}
where $z=(B-\rho_{q}I)^{-1}b$, we know that
$\mathcal{Y}$ is a closed ball with center $(-z)$ in
$(B-\rho_{q}I)$-inner-product and radius
$\bigl(z^{\Ttran}(B-\rho_{q}I)z\bigr)^{1/2}$. 
Therefore, for any $y \in \mathcal{Y}$,
\begin{equation*}
\norm{y-(-z)}_{B-\rho_{q}I} \leq \bigl(z^{\Ttran}(B-\rho_{q}I)z\bigr)^{1/2}
\end{equation*}
and $\mathcal{Y}$ is a convex set.
\end{proof}

%---------------

\subsection{The convexity of $\phi$ on $\mathcal{Y}$} \label{sec:convexy}

%\Cref{lemaux} tells us that the auxiliary problem \Cref{auxprob} is
%an optimization problem for a smooth function $\phi(y)$ defined
%on a convex set $\mathcal{Y}$.

Now let us show that the function $\phi(y)$ is
convex on $\mathcal{Y}$ by proving that $\phi(y)$ is 
a strongly convex function satisfying 
the second-order characterization \eqref{cvxmuL3}.

\begin{theorem} \label{convexP}
If the vector $q$ in the auxiliary problem \Cref{auxprob} satisfies
\begin{equation} \label{eq:positivemuP}
\lambda_{1}\leq \rho_{q} = \Rq(q)
< \lambda_{1}+\frac{\lambda_{2}-\lambda_{1}}{2+\chi_{P}},
\end{equation}
where
\begin{equation}\label{eq:chiP}
\chi_{P}=\frac{8\lambda_{2}}{\lambda_{1}}\frac{\xi_{\max}}{\xi_{\min}}
\Bigl(\frac{\lambda_{2}+\lambda_{1}}{2(\lambda_{2}-\lambda_{1})}\Bigr)^{1/2}>0,
\end{equation}
and $\xi_{\min}$ and $\xi_{\max}$ are the smallest and largest eigenvalues
of $(B,P)$, respectively.
	
Then the second-order characterization of 
the convexity of $\phi$ in the auxiliary problem \Cref{auxprob}
	\begin{equation} \label{ieqcvx2P}
	\mu_{P} P \preceq \nabla^{2}\phi(y) \preceq L_{P} P
	\end{equation}
	holds for all $y\in \mathcal{Y}$, where
	\begin{equation} \label{defCP}
	\begin{aligned}
	\mu_{P} & = 2\xi_{\min}\Bigl(1-\frac{4(\rho_{q}-\lambda_{1})}{\lambda_{2}-\lambda_{1}}\Bigr)
	\Bigl(1-\frac{\lambda_{1}}{\lambda_{2}}-\frac{(2+\chi_{P})}{\lambda_{2}}(\rho_{q}-\lambda_{1})\Bigr)>0,                                     \\
	L_{P}   & =2\xi_{\max}\Bigl(1-\frac{\lambda_{1}}{\lambda_{n}}+
 \frac{\chi_{P}}{\lambda_{2}}
 \frac{\xi_{\min}}{\xi_{\max}}(\rho_{q}-\lambda_{1})
 \Bigr).
			\end{aligned}
		\end{equation}
	\end{theorem}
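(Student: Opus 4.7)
The plan is to verify the second-order characterization $\mu_{P}P\preceq\nabla^{2}\phi(y)\preceq L_{P}P$ by explicit computation of $\nabla^{2}\phi$ followed by estimates that exploit three standing facts on $\mathcal{Y}$: $\lambda_{1}\le\phi(y)\le\rho_{q}$, the Notay-type bound $\lambda_{\min}(B)\ge \lambda_{1}+\lambda_{2}-\rho_{q}$ recalled in \eqref{eigB}, and the $P$-sandwich $\xi_{\min}P\preceq B\preceq \xi_{\max}P$ that defines the auxiliary eigenvalues. Starting from the rational form \eqref{defphi}, the quotient rule yields
$$
\nabla\phi(y) = \frac{2}{D(y)}\,Q^{\Ttran}(A-\phi(y)M)(Qy+q),\qquad
\nabla^{2}\phi(y) = \frac{2}{D(y)}\Bigl[B-\phi(y)I - (\nabla\phi)\,y^{\Ttran} - y\,(\nabla\phi)^{\Ttran}\Bigr],
$$
where $D(y)=y^{\Ttran}y+1$. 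Thus $\nabla^{2}\phi$ splits into a symmetric principal part $\tfrac{2}{D}(B-\phi I)$ and a rank-two correction, and these are the two pieces I would treat separately.

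The principal part is the easy piece. The elementary inequality $v^{\Ttran}(B-\phi I)v\ge (1-\phi/\lambda_{\min}(B))\,v^{\Ttran}Bv$, applied with $\phi\le \rho_{q}$, $\lambda_{\min}(B)\ge\lambda_{1}+\lambda_{2}-\rho_{q}$, and $B\succeq \xi_{\min}P$, gives
$$
B-\phi I \succeq \frac{\lambda_{1}+\lambda_{2}-2\rho_{q}}{\lambda_{1}+\lambda_{2}-\rho_{q}}\,\xi_{\min}\,P,
$$
whose leading value as $\rho_{q}\to\lambda_{1}$ is $\xi_{\min}(1-\lambda_{1}/\lambda_{2})P$, matching the dominant term inside $\mu_{P}$. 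Symmetrically, $B-\phi I\preceq (1-\lambda_{1}/\lambda_{n})\,\xi_{\max}P$ using $\phi\ge\lambda_{1}$ and $\lambda_{\max}(B)\le\lambda_{n}$, matching the leading coefficient of $L_{P}$. Next I would sandwich the prefactor $2/D$: expanding $q$ and $x=\psi^{\dagger}(y)$ in the $M$-orthonormal eigenbasis of $(A,M)$, the constraints $\rho_{q}=\Rq(q)$ and $\Rq(x)\le\rho_{q}$ each give $1-\alpha^{2}\le(\rho_{q}-\lambda_{1})/(\lambda_{2}-\lambda_{1})$ for the first expansion coefficient, so the angle identity $q^{\Ttran}Mx\ge \alpha\gamma-\sqrt{(1-\alpha^{2})(1-\gamma^{2})}$ yields $q^{\Ttran}Mx\ge 1-2(\rho_{q}-\lambda_{1})/(\lambda_{2}-\lambda_{1})$. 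Since $D=1/(q^{\Ttran}Mx)^{2}$ by \eqref{eq:psiq}, this produces $2/D\ge 2\bigl(1-4(\rho_{q}-\lambda_{1})/(\lambda_{2}-\lambda_{1})\bigr)$, exactly reproducing the first factor of $\mu_{P}$.

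The hard part will be absorbing the rank-two correction $(\nabla\phi)\,y^{\Ttran}+y\,(\nabla\phi)^{\Ttran}$ into a $P$-relative estimate. The weighted Cauchy--Schwarz bound $|2(v^{\Ttran}y)(v^{\Ttran}\nabla\phi)|\le 2\|y\|_{P^{-1}}\|\nabla\phi\|_{P^{-1}}\,v^{\Ttran}Pv$ reduces everything to showing that both $\|y\|_{P^{-1}}$ and $\|\nabla\phi\|_{P^{-1}}$ are of order $\sqrt{\rho_{q}-\lambda_{1}}$ and that their product matches the constant $\chi_{P}(\rho_{q}-\lambda_{1})/\lambda_{2}$ appearing in $\mu_{P}$ and $L_{P}$. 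For $\|y\|_{P^{-1}}$ I would combine $P^{-1}\preceq \xi_{\max}B^{-1}/1$ with the bound $\|y\|^{2}=D-1$ derived above; for $\|\nabla\phi\|_{P^{-1}}$ I would use the equivalent closed form $\nabla\phi=(2/\sqrt{D})\,Q^{\Ttran}(A-\phi M)x$, split $x=\gamma u_{1}+x^{\perp}$ in the eigenbasis, bound the $u_{1}$ contribution by $|\lambda_{1}-\phi|\,|\gamma|$ and the orthogonal contribution by $(\lambda_{n}-\lambda_{1})\|x^{\perp}\|_{M}$, and convert both into the $P^{-1}$-norm via $\xi_{\max}$ and $\lambda_{\min}(B)$. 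Tracking the constants carefully, the pre-factors $\xi_{\max}/\xi_{\min}$ (from the $P^{-1}$-to-$B^{-1}$ conversion), $\lambda_{2}/\lambda_{1}$ (from the eigenvalue-gap normalization), and the square root $\bigl((\lambda_{1}+\lambda_{2})/(2(\lambda_{2}-\lambda_{1}))\bigr)^{1/2}$ (from Cauchy--Schwarz slack) should combine to reproduce exactly the constant $\chi_{P}$ in \eqref{eq:chiP}. Assumption \eqref{eq:positivemuP} is designed precisely so that the principal-part lower bound dominates the subtracted rank-two contribution and keeps $\mu_{P}>0$; on the upper side, the same rank-two bound, added rather than subtracted, gives the $L_{P}$ expression, completing the verification of \eqref{ieqcvx2P}.
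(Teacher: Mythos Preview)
Your proposal follows the paper's proof almost exactly: the same Hessian formula
\[
\nabla^{2}\phi(y)=\frac{2}{1+\norm{y}^{2}}\Bigl(B-\phi(y)I-(\nabla\phi)y^{\Ttran}-y(\nabla\phi)^{\Ttran}\Bigr),
\]
the same split into principal part and rank-two correction, the same use of the Notay bound $\lambda_{\min}(B)\ge\lambda_{1}+\lambda_{2}-\rho_{q}$ for the lower estimate of $B-\phi I$, and the same angle argument to control $1/(1+\norm{y}^{2})$ (your $\alpha\gamma-\sqrt{(1-\alpha^{2})(1-\gamma^{2})}$ bound is just a variant of the paper's Lemma~\ref{angle}).

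The one place you diverge is the rank-two piece, and there your sketch has a gap. The paper does \emph{not} estimate $\|y\|_{P^{-1}}$ and $\|\nabla\phi\|_{P^{-1}}$ directly; instead it proves a $B$-relative bound first (Lemma~\ref{esterr}),
\[
-\chi_{g}B\preceq(\nabla\phi)y^{\Ttran}+y(\nabla\phi)^{\Ttran}\preceq\chi_{g}B,
\]
via Cauchy--Schwarz in the $A$-inner product on $\R^{n}$: $|z^{\Ttran}\nabla\phi|\le\norm{z}_{B}\norm{\nabla\Rq(x)}_{A^{-1}}/\sqrt{1+\norm{y}^{2}}$ and $|z^{\Ttran}y|\le\norm{z}_{B}\norm{y}/\sqrt{\lambda_{1}}$. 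Only afterwards is $B\preceq\xi_{\max}P$ applied, so $\xi_{\max}/\xi_{\min}$ enters exactly once. The crucial ingredient is the residual estimate
\[
\norm{\nabla\Rq(x)}_{A^{-1}}^{2}=4\sum_{i}\frac{c_{i}^{2}(\lambda_{i}-\rho)^{2}}{\lambda_{i}}\le 4\Bigl(\frac{c_{1}^{2}(\rho-\lambda_{1})^{2}}{\lambda_{1}}+\sum_{i\ge2}c_{i}^{2}(\lambda_{i}-\rho)\Bigr)=\frac{4c_{1}^{2}\rho(\rho-\lambda_{1})}{\lambda_{1}},
\]
where the equality uses $\sum_{i}c_{i}^{2}\lambda_{i}=\rho$ and $\sum_{i}c_{i}^{2}=1$. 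This algebraic cancellation is what keeps $\lambda_{n}$ out of $\chi_{g}$ and hence out of $\chi_{P}$. Your proposed split ``bound the orthogonal contribution by $(\lambda_{n}-\lambda_{1})\norm{x^{\perp}}_{M}$'' misses this trick and would insert a spurious $\lambda_{n}$ factor, so the claim that your constants ``combine to reproduce exactly the constant $\chi_{P}$'' is not correct as written. Replace that step with the $A^{-1}$-norm identity above and the rest of your outline goes through.
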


	\begin{remark}
		Since $\chi_{P}>0$, condition \Cref{eq:rhoq1} holds automatically under condition \Cref{eq:positivemuP}.
	\end{remark}

	Before proving \Cref{convexP}, we first show the following two lemmas.
	The first lemma gives an upper bound for
	the angle between $x_{1}$ and $x_{2}\in\mathcal{X}$.\footnote{
	Let $\theta$ be the angle between $x_{1}$ and $x_{2}$ in $M$-inner-product, then we have
	\begin{equation*}
		\theta = \arccos \frac{\abs{x_{1}^{\Ttran}Mx_{2}}}{\norm{x_{1}}_{M}\norm{x_{2}}_{M}}.
	\end{equation*}
	Moreover, due to $x_{1}^{\Ttran}Mx_{2}>0$ and $\norm{x_{1}}_{M}=\norm{x_{2}}_{M}=1$, we have
	\begin{equation*}
		\theta = \arccos(x_{1}^{\Ttran}Mx_{2}).
	\end{equation*}
	Since $x_{1}^{\Ttran}Mx_{2} = 1-\order(\rho_{q}-\lambda_{1})$, when $\rho_{q}-\lambda_{1}$ is sufficiently small, we know $\theta=\order((\rho_{q}-\lambda_{1})^{1/2})$.
	} % end of footnote
	
\begin{lemma} \label{angle} Under condition \Cref{eq:rhoq1}, for any $x_{1}$ and $x_{2}\in\mathcal{X}$,
\begin{equation*}
x_{1}^{\Ttran}Mx_{2} \geq 1- \delta_{M}>0,
\end{equation*}
where
$\delta_{M} = 2(\rho_{q}-\lambda_{1})/(\lambda_{2}-\lambda_{1})<1$.
\end{lemma}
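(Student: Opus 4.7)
The plan is to expand every vector of $\mathcal{X}$ in the $M$-orthonormal eigenbasis $\{u_{i}\}_{i=1}^{n}$ of $(A,M)$ and exploit the bound $\Rq(x)\le\rho_{q}$ to confine each $x\in\mathcal{X}$ near $u_{1}$. Writing $x=\sum_{i}\alpha_{i}u_{i}$, I have $\norm{x}_{M}^{2}=\sum_{i}\alpha_{i}^{2}=1$ and $\Rq(x)=\sum_{i}\alpha_{i}^{2}\lambda_{i}$. Subtracting $\lambda_{1}$ from both sides of the Rayleigh quotient bound and using the spectral gap inequality $\lambda_{i}-\lambda_{1}\ge\lambda_{2}-\lambda_{1}$ for $i\ge 2$ yields
\[
\sum_{i\ge 2}\alpha_{i}^{2}\le\frac{\rho_{q}-\lambda_{1}}{\lambda_{2}-\lambda_{1}}=\frac{\delta_{M}}{2},
\]
so that $\alpha_{1}^{2}\ge 1-\delta_{M}/2>1/2$, where the strict inequality comes from the hypothesis $\rho_{q}<(\lambda_{1}+\lambda_{2})/2$ built into \Cref{eq:rhoq1}.

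The main subtlety is pinning down the sign of $\alpha_{1}$, since the target lower bound fails if the leading components of $x_{1}$ and $x_{2}$ have opposite signs. To rule this out I would apply the previous estimate to the base point $q\in\mathcal{X}$, writing $q=\sum_{i}\beta_{i}u_{i}$ with $\beta_{1}>0$ by the standing assumption $q^{\Ttran}Mu_{1}>0$. Then for any $x\in\mathcal{X}$, the constraint $q^{\Ttran}Mx>0$ combined with the Cauchy--Schwarz estimate $\bigabs{\sum_{i\ge 2}\beta_{i}\alpha_{i}}\le\delta_{M}/2<1/2$ forces $\beta_{1}\alpha_{1}>-\delta_{M}/2>-1/2$. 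Since $\abs{\beta_{1}\alpha_{1}}\ge 1-\delta_{M}/2>1/2$, this compels $\beta_{1}\alpha_{1}>0$, and hence $\alpha_{1}\ge\sqrt{1-\delta_{M}/2}>0$.

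Finally, for $x_{1},x_{2}\in\mathcal{X}$ with expansions $x_{j}=\sum_{i}\alpha_{i}^{(j)}u_{i}$, I split
\[
x_{1}^{\Ttran}Mx_{2}=\alpha_{1}^{(1)}\alpha_{1}^{(2)}+\sum_{i\ge 2}\alpha_{i}^{(1)}\alpha_{i}^{(2)}.
\]
The first term is at least $1-\delta_{M}/2$ since both factors are positive and bounded below by $\sqrt{1-\delta_{M}/2}$, while Cauchy--Schwarz bounds the tail in absolute value by $\delta_{M}/2$. Summing yields $x_{1}^{\Ttran}Mx_{2}\ge 1-\delta_{M}$, and strict positivity follows from $\delta_{M}<1$. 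The only delicate step is the sign argument above, which is precisely why the threshold $\rho_{q}<(\lambda_{1}+\lambda_{2})/2$ is built into \Cref{eq:rhoq1}; everything else is a routine spectral expansion combined with Cauchy--Schwarz.
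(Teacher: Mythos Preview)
Your proof is correct and follows essentially the same strategy as the paper: expand in the $M$-orthonormal eigenbasis, bound the leading coefficient via the Rayleigh quotient constraint, fix its sign, and control the tail by Cauchy--Schwarz. The one noteworthy difference is the sign step: the paper argues $c_{1,j}>0$ by asserting that $\mathcal{X}$ is connected and contains $u_{1}$, whereas you give a direct quantitative argument using $q^{\Ttran}Mx>0$ and the bound $\bigl|\sum_{i\ge 2}\beta_{i}\alpha_{i}\bigr|<1/2$; your version is more self-contained since the paper does not justify the connectedness claim at this point (it follows only later from the convexity of $\mathcal{Y}$).
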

\begin{proof}
Let us decompose $x_{j}$ on the basis of the $M$-orthonormal eigenvectors $u_{i}$: 
\begin{equation} \label{eigendecompqx}
x_{j}=\sum_{i=1}^{n}c_{i,j}u_{i}, \quad \text{and}\quad  \sum_{i=1}^{n}c_{i,j}^2=1,
\end{equation}
where $j=1$ and $2$.
Since $x_{j}\in\mathcal{X}$, we have
\begin{equation*}
\rho_{q}\geq \Rq(x_{j}) = \sum_{i=1}^{n}c_{i,j}^{2}\lambda_{i}\geq c_{1,j}^{2}\lambda_{1}+(1-c_{1,j}^{2})\lambda_{2}.
\end{equation*}
Combining it with $\rho_{q}<(\lambda_{1}+\lambda_{2})/2$ in \Cref{eq:rhoq1}, we know
\begin{equation}\label{eq:c1j}
c_{1,j}^{2}\geq \frac{\lambda_{2}-\rho_{q}}{\lambda_{2}-\lambda_{1}}>\frac{1}{2},
\end{equation}
which means for any $x\in\mathcal{X}$, $x^{\Ttran}Mu_{1}\neq0$. Since $\mathcal{X}$ is connected on the hemisphere $\Sp$ and $u_{1}\in\mathcal{X}$, we have $c_{1,j}>0$. By the Cauchy-Schwarz inequality,
\begin{equation*}
x_{1}^{\Ttran}Mx_{2}=\sum_{i=1}^{n}c_{i,1}c_{i,2}\geq c_{1,1}c_{1,2}-\frac{1}{2}\sum_{i=2}^{n}(c_{i,1}^{2}+c_{i,2}^{2})=\frac{(c_{1,1}+c_{1,2})^{2}}{2}-1 \geq 1-\frac{2(\rho_{q}-\lambda_{1})}{\lambda_{2}-\lambda_{1}},
\end{equation*}
where \eqref{eq:c1j} is used in the last inequality.
	\end{proof}
	\begin{remark}
		Due to $q\in\mathcal{X}$, we know $q^{\Ttran}Mx\geq 1-\delta_{M}$ holds for any $x\in\mathcal{X}$.
	\end{remark}
	\begin{corollary} \label{corNormY}
		For any $y\in\mathcal{Y}$, let $x=\psi^{\dagger}(y)$,
		\begin{equation*}
			\frac{y^{\Ttran}y}{1+y^{\Ttran}y}=1-(x^{\Ttran}Mq)^{2}\leq 2\delta_{M}
			\quad\text{and}\quad
			1-2\delta_{M}\leq \frac{1}{1+y^{\Ttran}y} \leq 1.
		\end{equation*}
	\end{corollary}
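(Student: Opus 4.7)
The plan is to reduce both statements to a single identity, namely a closed-form expression for $(x^{\Ttran}Mq)^{2}$ in terms of $y^{\Ttran}y$, and then to apply \Cref{angle} with $q$ and $x$ as the two points in $\mathcal{X}$.

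First I would compute $\norm{Qy+q}_{M}^{2}$ explicitly. Since $\widetilde{Q}=[q,Q]$ is $M$-orthogonal, we have $Q^{\Ttran}MQ=I_{n-1}$, $Q^{\Ttran}Mq=0$ and $q^{\Ttran}Mq=1$, so
\begin{equation*}
\norm{Qy+q}_{M}^{2} = y^{\Ttran}Q^{\Ttran}MQ\,y + 2y^{\Ttran}Q^{\Ttran}Mq + q^{\Ttran}Mq = y^{\Ttran}y + 1.
\end{equation*}
From the definition $x = \psi^{\dagger}(y) = (Qy+q)/\norm{Qy+q}_{M}$, taking the $M$-inner product with $q$ and again using $Q^{\Ttran}Mq=0$ yields
\begin{equation*}
x^{\Ttran}Mq = \frac{q^{\Ttran}M(Qy+q)}{\norm{Qy+q}_{M}} = \frac{1}{(1+y^{\Ttran}y)^{1/2}},
\end{equation*}
hence $(x^{\Ttran}Mq)^{2} = 1/(1+y^{\Ttran}y)$, which immediately gives the identity
\begin{equation*}
\frac{y^{\Ttran}y}{1+y^{\Ttran}y} = 1 - \frac{1}{1+y^{\Ttran}y} = 1 - (x^{\Ttran}Mq)^{2}.
\end{equation*}

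Next I would bound $(x^{\Ttran}Mq)^{2}$ from above and below. The upper bound $(x^{\Ttran}Mq)^{2}\leq 1$ is the Cauchy--Schwarz inequality in the $M$-inner product combined with $\norm{x}_{M}=\norm{q}_{M}=1$; this already yields $1/(1+y^{\Ttran}y)\leq 1$. For the lower bound, note that $q\in\mathcal{X}$ since $\Rq(q)=\rho_{q}$, and $x=\psi^{\dagger}(y)\in\mathcal{X}$ by the identification $\mathcal{X}=\psi^{\dagger}(\mathcal{Y})$ from \Cref{lemPsi}. Applying \Cref{angle} to the pair $(q,x)$ gives $x^{\Ttran}Mq\geq 1-\delta_{M}>0$, so
\begin{equation*}
(x^{\Ttran}Mq)^{2} \geq (1-\delta_{M})^{2} = 1-2\delta_{M}+\delta_{M}^{2} \geq 1-2\delta_{M}.
\end{equation*}
Substituting this into the identity above yields $1-(x^{\Ttran}Mq)^{2}\leq 2\delta_{M}$ and $1/(1+y^{\Ttran}y)\geq 1-2\delta_{M}$, which completes both claims.

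There is no real obstacle here: the only nontrivial input is \Cref{angle}, and everything else is a direct calculation exploiting the $M$-orthonormality of $[q,Q]$. The proof is two short computations followed by one invocation of the earlier lemma.
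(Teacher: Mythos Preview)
Your proof is correct and follows exactly the approach the paper intends: the corollary is stated without proof, but the key identity $1+y^{\Ttran}y = 1/(q^{\Ttran}Mx)^{2}$ appears explicitly in the proof of \Cref{esterr}, and the bound on $q^{\Ttran}Mx$ is precisely the remark following \Cref{angle}. Your derivation of the identity via the $M$-orthonormality of $[q,Q]$ and your invocation of \Cref{angle} for the pair $(q,x)$ are both spot on.
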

	
The second lemma shows the extreme eigenvalues of $\nabla\phi(y)y^{\Ttran}+y\bigl(\nabla\phi(y)\bigr)^{\Ttran}$ for any $y \in \mathcal{Y}$ in $B$ inner-product.
	
\begin{lemma} \label{esterr}
Under condition \Cref{eq:rhoq1}, for any $y\in\mathcal{Y}$,
\begin{equation}
	-\chi_{g} B \preceq
	\nabla\phi(y)y^{\Ttran}+y\bigl(\nabla\phi(y)\bigr)^{\Ttran}
	\preceq \chi_{g} B,
\end{equation}
where
\begin{equation}\label{eq:chig}
	\chi_{g} = \frac{8(\rho_{q}-\lambda_{1})}{\lambda_{1}}\Bigl(\frac{\lambda_{1}+\lambda_{2}}{2(\lambda_{2}-\lambda_{1})}\Bigr)^{1/2}.
\end{equation}
\end{lemma}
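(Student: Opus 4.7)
The plan is to reduce the rank-$2$ operator inequality for $g y^{\Ttran} + y g^{\Ttran}$, with $g := \nabla\phi(y)$, to separate scalar estimates on $y^{\Ttran}B^{-1}y$ and $g^{\Ttran}B^{-1}g$, using a weighted Cauchy--Schwarz inequality as the main tool. For any vectors $u, v \in \R^{n-1}$ and any test vector $w$, two applications of Cauchy--Schwarz (one in the $B$-norm, one in the $B^{-1}$-norm) give $2\abs{(w^{\Ttran}u)(w^{\Ttran}v)} \leq 2\sqrt{u^{\Ttran}B^{-1}u \cdot v^{\Ttran}B^{-1}v}\cdot w^{\Ttran}Bw$, which translates into
\begin{equation*}
-2\sqrt{u^{\Ttran}B^{-1}u \cdot v^{\Ttran}B^{-1}v}\,B \preceq uv^{\Ttran} + vu^{\Ttran} \preceq 2\sqrt{u^{\Ttran}B^{-1}u \cdot v^{\Ttran}B^{-1}v}\,B.
\end{equation*}
Taking $u = g$ and $v = y$, the task reduces to showing $4\,(y^{\Ttran}B^{-1}y)(g^{\Ttran}B^{-1}g) \leq \chi_g^2$.

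The next step is to relate $g$ to the eigenvalue residual and estimate the two scalar factors. Differentiating \Cref{defphi} gives $g = \frac{2}{1+\norm{y}^2}(By + b - \phi(y)y)$, and setting $x = \psi^{\dagger}(y)$, $r_x = (A - \phi(y)M)x$, a short calculation using $\norm{Qy+q}_M^2 = 1+\norm{y}^2$ and the $M$-orthogonality of $[q, Q]$ shows $g = 2(1+\norm{y}^2)^{-1/2}Q^{\Ttran}r_x$. For the $y$-factor, Notay's bound \Cref{eigB} combined with \Cref{corNormY} yields $y^{\Ttran}B^{-1}y \leq \norm{y}^2/\lambda_{\min}(B) = \order(\rho_q - \lambda_1)$. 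For the $g$-factor, I would exploit the operator inequality $QB^{-1}Q^{\Ttran} \preceq A^{-1}$---proved by noting that with $Q' = A^{1/2}Q$ the matrix $Q'(Q'^{\Ttran}Q')^{-1}Q'^{\Ttran}$ is an orthogonal projection on $\R^n$ and conjugating by $A^{-1/2}$---to reduce to $g^{\Ttran}B^{-1}g \leq 4\,r_x^{\Ttran}A^{-1}r_x/(1+\norm{y}^2)$. Expanding $x = \sum_i c_i u_i$ in the $M$-orthonormal eigenbasis and using $A^{-1}Mu_i = u_i/\lambda_i$ gives $r_x^{\Ttran}A^{-1}r_x = \sum_i c_i^2(\lambda_i - \phi(y))^2/\lambda_i$; the $i=1$ term is bounded by $(\rho_q - \lambda_1)^2/\lambda_1$, while the tail is controlled by $(\lambda_i - \phi(y))^2 \leq \lambda_i^2$ (since $\phi(y) < \lambda_2 \leq \lambda_i$ by \Cref{eq:rhoq1}), which collapses it to $\sum_{i\geq 2} c_i^2 \lambda_i$, in turn bounded by $\lambda_2(\phi(y) - \lambda_1)/(\lambda_2 - \lambda_1)$ via the identities $\sum_i c_i^2 = 1$, $\sum_i c_i^2 \lambda_i = \phi(y)$, and $\sum_{i\geq 2} c_i^2 \leq (\phi(y) - \lambda_1)/(\lambda_2 - \lambda_1)$.

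Finally, I would multiply the two scalar bounds, take the square root, and verify the stated $\chi_g$ under \Cref{eq:rhoq1}. The hard part will be the constant tracking: $\chi_g$ involves only $\lambda_1$ and $\lambda_2$ (no $\lambda_n$), so it is critical that the $1/\lambda_1$ factor emerges from within $r_x^{\Ttran}A^{-1}r_x$ through the $1/\lambda_i$ weights cancelling the $\lambda_i^2$ numerators (rather than from a crude $\norm{r_x}_{M^{-1}}$ bound that would introduce $\lambda_n$), and that the two appearances of $\lambda_2 - \lambda_1$---one from \Cref{corNormY} via $\norm{y}^2$ and one from the $\sum_{i\geq 2} c_i^2$ tail estimate---combine with Notay's $\lambda_{\min}(B) \geq \lambda_1 + \lambda_2 - \rho_q$ to produce exactly the factor $\sqrt{(\lambda_1+\lambda_2)/(\lambda_2-\lambda_1)}/\lambda_1$ appearing in $\chi_g$.
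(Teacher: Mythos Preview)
Your reduction via Cauchy--Schwarz to the two scalar quantities $y^{\Ttran}B^{-1}y$ and $g^{\Ttran}B^{-1}g$ is exactly the paper's approach (the paper phrases it as bounding $|z^{\Ttran}g|$ and $|z^{\Ttran}y|$ separately by multiples of $\norm{z}_{B}$, which is the dual formulation of what you wrote). Your operator inequality $QB^{-1}Q^{\Ttran}\preceq A^{-1}$ and the identity $g=2(1+\norm{y}^{2})^{-1/2}Q^{\Ttran}r_{x}$ are both correct and match the paper.

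There is a genuine gap, though, in your estimate of $r_{x}^{\Ttran}A^{-1}r_{x}=\sum_{i}c_{i}^{2}(\lambda_{i}-\rho)^{2}/\lambda_{i}$ (with $\rho=\phi(y)$). Your tail bound $(\lambda_{i}-\rho)^{2}\leq\lambda_{i}^{2}$ gives $\sum_{i\geq2}c_{i}^{2}\lambda_{i}\leq\lambda_{2}(\rho-\lambda_{1})/(\lambda_{2}-\lambda_{1})$, and combining this with your $y$-factor estimate produces (at leading order in $\rho_{q}-\lambda_{1}$) the constant $64\lambda_{2}(\rho_{q}-\lambda_{1})^{2}/\bigl((\lambda_{2}-\lambda_{1})^{2}(\lambda_{1}+\lambda_{2}-\rho_{q})\bigr)$. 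Comparing with $\chi_{g}^{2}=32(\lambda_{1}+\lambda_{2})(\rho_{q}-\lambda_{1})^{2}/\bigl(\lambda_{1}^{2}(\lambda_{2}-\lambda_{1})\bigr)$, your bound dominates $\chi_{g}^{2}$ only when $\lambda_{2}\geq\sqrt{3}\,\lambda_{1}$; for small relative gaps the argument fails. The paper instead uses the sharper inequality $(\lambda_{i}-\rho)^{2}/\lambda_{i}\leq\lambda_{i}-\rho$ for $i\geq2$ (valid since $0<\lambda_{i}-\rho<\lambda_{i}$), after which the constraint $\sum_{i}c_{i}^{2}\lambda_{i}=\rho$ gives the exact identity $\sum_{i\geq2}c_{i}^{2}(\lambda_{i}-\rho)=c_{1}^{2}(\rho-\lambda_{1})$; adding back the $i=1$ term yields the clean bound $r_{x}^{\Ttran}A^{-1}r_{x}\leq\rho(\rho-\lambda_{1})/\lambda_{1}$. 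This is what produces one factor of $1/\sqrt{\lambda_{1}}$; the second $1/\sqrt{\lambda_{1}}$ comes from the $y$-factor via $|z^{\Ttran}y|\leq\norm{z}_{B}\norm{y}/\sqrt{\lambda_{1}}$ (the paper does not use Notay's bound here). With these two ingredients and $\rho_{q}<(\lambda_{1}+\lambda_{2})/2$ the stated $\chi_{g}$ falls out directly.
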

\begin{proof}
	For any $z\in\R^{n - 1}$, it is sufficient to show
	\begin{equation*}
	\abs{z^{\Ttran}\nabla\phi(y)y^{\Ttran}z+z^{\Ttran}y\bigl(\nabla\phi(y)\bigr)^{\Ttran}z}\leq
2\abs{z^{\Ttran}\nabla\phi(y)}\abs{z^{\Ttran}y}\leq {\chi_{g}} \norm{z}_{B}^{2}.
	\end{equation*}
We will prove
\begin{align}
\abs{z^{\Ttran}\nabla\phi(y)} & \leq 2\norm{z}_{B}\Bigl(\frac{\rho_{q}(\rho_{q}-\lambda_{1})}{\lambda_{1}(1+\norm{y}^{2})}\Bigr)^{1/2}, \label{esterr1} \\
			\abs{z^{\Ttran}y}             & \leq \frac{\norm{z}_{B}\norm{y}}{\sqrt{\lambda_{1}}}.
			\label{esterr2}
		\end{align}
First, consider the bound \eqref{esterr1}. The gradients of $\Rq(x)$ and $\phi(y)$  are easily computed as follows:
		\begin{equation} \label{diff}
			\begin{aligned}
				\nabla \Rq(x)  & =2\bigl(Ax-\Rq(x)Mx\bigr),                                     \\
				\nabla \phi(y) & =\frac{2}{\norm{y}^{2}+1}\bigl(By-\phi(y)y+Q^{\Ttran}Aq\bigr).
			\end{aligned}
		\end{equation}
		Let $x=\psi^{\dagger}(y)$, note that $B=Q^{\Ttran}AQ$, $\Rq(x)=\phi(y)$, $QQ^{\Ttran}M+qq^{\Ttran}M=I$ and
		\begin{equation*}
			1+\norm{y}^{2}=1+\frac{x^{\Ttran}MQQ^{\Ttran}Mx}{(q^{\Ttran}Mx)^{2}} = \frac{x^{\Ttran}Mx}{(q^{\Ttran}Mx)^{2}} = \frac{1}{(q^{\Ttran}Mx)^{2}},
		\end{equation*}
		we have
		\begin{equation} \label{gradlink}
			\begin{aligned}
				\nabla \phi(y) & = \frac{2}{\norm{y}^{2}+1}\Bigl(\frac{Q^{\Ttran}AQQ^{\Ttran}Mx}{q^{\Ttran}Mx}-\frac{\Rq(x)Q^{\Ttran}Mx}{q^{\Ttran}Mx}+Q^{\Ttran}Aq\Bigr) \\
							   & = \frac{2}{\sqrt{1+\norm{y}^{2}}}\bigl(Q^{\Ttran}A(QQ^{\Ttran}M+qq^{\Ttran}M)x-\Rq(x)Q^{\Ttran}Mx\bigr)                      \\
							   & = \frac{2}{\sqrt{1+\norm{y}^{2}}}\bigl(Q^{\Ttran}Ax-\Rq(x)Q^{\Ttran}Mx\bigr)
				= \frac{Q^{\Ttran}\nabla\Rq(x)}{\sqrt{1+\norm{y}^{2}}}.
			\end{aligned}
		\end{equation}
	Then by the Cauchy-Schwarz inequality,
		\begin{equation}
			\label{estnormy1}
			|z^{\Ttran}\nabla\phi(y)|
			= \frac{\left|(Qz)^{\Ttran}\nabla\Rq(x)\right|}{\sqrt{1+\norm{y}^{2}}}
			\leq \frac{\norm{Qz}_{A}\norm{\nabla\Rq(x)}_{A^{-1}}}{\sqrt{1+\norm{y}^{2}}}
			= \frac{\norm{z}_{B}\norm{\nabla\Rq(x)}_{A^{-1}}}{\sqrt{1+\norm{y}^{2}}}.
		\end{equation}
		Let $\rho=\Rq(x)=\phi(y)$, and assume $x=\sum_{i=1}^{n}c_{i}u_{i}$ like \Cref{eigendecompqx}, we know that
		\begin{equation*}
			\begin{aligned}
				\norm{\nabla\Rq(x)}_{A^{-1}}^{2}=4\sum_{i=1}^{n}\frac{c_{i}^{2}(\lambda_{i}-\rho)^{2}}{\lambda_{i}}.
			\end{aligned}
		\end{equation*}
		Since $x\in\mathcal{X}$, we have $\rho\leq\rho_{q}<\lambda_{2}$, then
		\begin{equation*}
			\sum_{i=1}^{n}\frac{c_{i}^{2}(\lambda_{i}-\rho)^{2}}{\lambda_{i}}
			\leq\frac{c_{1}^{2}(\rho-\lambda_{1})^{2}}{\lambda_{1}}+\sum_{i=2}^{n}c_{i}^{2}(\lambda_{i}-\rho)
			=\frac{c_{1}^{2}\rho(\rho-\lambda_{1})}{\lambda_{1}}
			\leq \frac{\rho(\rho-\lambda_{1})}{\lambda_{1}},
		\end{equation*}
		where the equation is based on the fact $\sum_{i=1}^{n}c_{i}^{2}=1$ and $\sum_{i=1}^{n}c_{i}^{2}\lambda_{i}=\rho$. Combining these two relationships, we know
		\begin{equation} \label{estnormy2}
			\norm{\nabla\Rq(x)}_{A^{-1}}^{2}\leq \frac{4\rho(\rho-\lambda_{1})}{\lambda_{1}}.
		\end{equation}
		Then \Cref{esterr1} is proved by $\rho\leq \rho_{q}$, \Cref{estnormy1,estnormy2}.
	
For the bound \Cref{esterr2}, 
		by the Cauchy-Schwarz inequality and the Courant-Fischer minimax theorem, we have
		\begin{equation*}
			\begin{aligned}
				z^{\Ttran}y
				= (Qz)^{\Ttran}MQy\leq \norm{Qz}_{A}\norm{MQy}_{A^{-1}}
				\leq \frac{\norm{z}_{B}\norm{MQy}_{M^{-1}}}{\sqrt{\lambda_{1}}}
				= \frac{\norm{z}_{B}\norm{y}}{\sqrt{\lambda_{1}}}.
			\end{aligned}
		\end{equation*}
		Then the lemma is proved by $\rho_{q}<(\lambda_{1}+\lambda_{2})/2$, \Cref{esterr1,esterr2,corNormY}.
	\end{proof}
	
	\begin{proof}[Proof for \Cref{convexP}]
	After direct computation and using \eqref{diff} again, we have
		\begin{equation*}
			\nabla^{2}\phi(y)=\frac{2}{\norm{y}^{2}+1}\Bigl(B-\phi(y)I-y\bigl(\nabla\phi(y)\bigr)^{\Ttran}-\nabla\phi(y)y^{\Ttran}\Bigr)
		\end{equation*}
		for any $y\in\R^{n - 1}$. Let $\sigma=\phi(y)\le \rho_q$, by $\lambda_{\max}(B)\le \lambda_n$,
		\begin{equation*}
			\begin{aligned}
				\max_{z\in\R^{n - 1}} & \frac{z^{\Ttran}(B-\sigma I)z}{\norm{z}_{P}^{2}}\leq \bigl(1-\sigma\lambda_{\max}^{-1}(B)\bigr)\xi_{\max}\leq \Bigl(1-\frac{\lambda_{1}}{\lambda_{n}}\Bigr)\xi_{\max},                   \\
				\min_{z\in\R^{n - 1}} & \frac{z^{\Ttran}(B-\sigma I)z}{\norm{z}_{P}^{2}}\geq \bigl(1-\sigma\lambda_{\min}^{-1}(B)\bigr)\xi_{\min}\geq \Bigl(1-\frac{\rho_{q}}{\lambda_{1}+\lambda_{2}-\rho_{q}}\Bigr)\xi_{\min},
			\end{aligned}
		\end{equation*}
		where $\lambda_{\min}^{-1}(B)$ is estimated in \Cref{eigB}.  If $\rho_q\le (\lambda_1+\lambda_2)/2$, then
		\begin{equation*}
			\frac{\rho_{q}}{\lambda_{1}+\lambda_{2}-\rho_{q}} = \frac{\lambda_{1}}{\lambda_{2}}+\frac{(\lambda_{2}+\lambda_{1})(\rho_{q}-\lambda_{1})}{\lambda_{2}(\lambda_{1}+\lambda_{2}-\rho_{q})}\leq \frac{\lambda_{1}}{\lambda_{2}}+\frac{2(\rho_{q}-\lambda_{1})}{\lambda_{2}},
		\end{equation*}
	  and $B-\phi(y)I$ can be bounded by
		\begin{equation} \label{estjP}
			\Bigl(1-\frac{\lambda_{1}}{\lambda_{2}}-\frac{2(\rho_{q}-\lambda_{1})}{\lambda_{2}}\Bigr)\xi_{\min}P\preceq B-\phi(y)I\preceq  \Bigl(1-\frac{\lambda_{1}}{\lambda_{n}}\Bigr)\xi_{\max}P.
		\end{equation}
		Using \Cref{esterr} and the definition of $\xi_{\max}$, we have
		\begin{equation} \label{estJP}
			-\chi_{g}\xi_{\max} P \preceq -\chi_{g}B \preceq
	\nabla\phi(y)y^{\Ttran}+y\bigl(\nabla\phi(y)\bigr)^{\Ttran}
	\preceq \chi_{g} B \preceq \chi_{g}\xi_{\max} P.
		\end{equation}
		Combining \Cref{estjP,estJP}, we know
		\begin{equation*}
			\frac{2\xi_{\min}}{\norm{y}^{2}+1} \Bigl(1-\frac{\lambda_{1}}{\lambda_{2}}-\frac{2(\rho_{q}-\lambda_{1})}{\lambda_{2}}-\frac{\chi_{g}\xi_{\max}}{\xi_{\min}}\Bigr) P
			\preceq \nabla^{2} \phi(y) \preceq \frac{2\xi_{\max}}{\norm{y}^{2}+1} \Bigl(1-\frac{\lambda_{1}}{\lambda_{n}}+\chi_{g}\Bigr) P
\end{equation*}
Note that $\chi_{P} = \frac{\chi_{g}\lambda_{2}\xi_{\max}}{(\rho_{q}-\lambda_{1})\xi_{\min}}$, we have
\begin{equation}\label{eq:LP}
\begin{aligned}
\frac{2\xi_{\max}}{\norm{y}^{2}+1}
\Bigl(1-\frac{\lambda_{1}}{\lambda_{n}}+\chi_{g}\Bigr)
& \leq 2\xi_{\max}\Bigl(1-\frac{\lambda_{1}}{\lambda_{n}}+\chi_{g}\Bigr) \\
& =2\xi_{\max}\Bigl(1-\frac{\lambda_{1}}{\lambda_{n}}+
\frac{\chi_{P}\xi_{\min}}{\lambda_{2}\xi_{\max}}(\rho_{q}-\lambda_{1})\Bigr) 
 \equiv L_P,
\end{aligned}
\end{equation}
and by \Cref{angle,corNormY},
\begin{equation*}
\begin{aligned}
&\frac{2\xi_{\min}}{\norm{y}^{2}+1}
\Bigl(1-\frac{\lambda_{1}}{\lambda_{2}}-
\frac{2(\rho_{q}-\lambda_{1})}{\lambda_{2}}-
\frac{\chi_{g}\xi_{\max}}{\xi_{\min}}\Bigr)\\
& \geq
2\xi_{\min}\Bigl(1-\frac{4(\rho_{q}-\lambda_{1})}{\lambda_{2}-\lambda_{1}}\Bigr)\Bigl(1-\frac{\lambda_{1}}{\lambda_{2}}-\frac{2(\rho_{q}-\lambda_{1})}{\lambda_{2}}-\frac{\chi_{g}\xi_{\max}}{\xi_{\min}}\Bigr) 
\equiv \mu_P.
\end{aligned}
\end{equation*}
which finishes the proof.
\end{proof}

	By \Cref{convexP}, we have the following two corollaries.
	\begin{corollary}
		\label{corCvxP}
		Up to the first order of $\rho_{q}-\lambda_{1}$, $\mu_{P}$ and $L_{P}$ are 
		\begin{equation*}
			\mu_{P} = 2\xi_{\min}\Bigl(1-\frac{\lambda_{1}}{\lambda_{2}}\Bigr)+\order(\rho_{q}-\lambda_{1})
			\quad\text{and}\quad
			L_{P} = 2\xi_{\max}\Bigl(1-\frac{\lambda_{1}}{\lambda_{n}}\Bigr)+\order(\rho_{q}-\lambda_{1}).
		\end{equation*}
		and the condition number $\kappa_{P}=L_{P}/\mu_{P}$ of the auxiliary function is given by 
		\begin{equation*}
			\kappa_{P} = \frac{L_{P}}{\mu_{P}} = \iota_{\xi} \frac{1-\lambda_{1}/\lambda_{n}}{1-\lambda_{1}/\lambda_{2}}+\order(\rho_{q}-\lambda_{1}),
		\end{equation*}
  where $\iota_{\xi} = \xi_{\max}/\xi_{\min}$. 
		When the standard inner--product is applied, \ie $P=I$, by the eigenvalue estimation of $B$ in \Cref{eigB}, we have 
		\begin{equation*}
			\kappa_{I} = \frac{\lambda_{n}-\lambda_{1}}{\lambda_{2}-\lambda_{1}}+\order(\rho_{q}-\lambda_{1}).
		\end{equation*}
	\end{corollary}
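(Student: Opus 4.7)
The plan is to prove the corollary by direct Taylor expansion in the small parameter $\rho_{q}-\lambda_{1}$ of the explicit formulas for $\mu_{P}$ and $L_{P}$ given in \Cref{convexP}. The key structural observation is that $\chi_{P}$ in \Cref{eq:chiP} depends only on $\lambda_{1},\lambda_{2},\xi_{\min},\xi_{\max}$, so it contributes no additional $(\rho_{q}-\lambda_{1})$-dependence and may be treated as a bounded constant throughout the expansion.

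First I would expand $\mu_{P}$ from \Cref{defCP}. It is $2\xi_{\min}$ times a product of two factors, each of the form (constant) $+\,\order(\rho_{q}-\lambda_{1})$, with leading constants $1$ and $1-\lambda_{1}/\lambda_{2}$ respectively. Multiplying the factors and collecting terms to first order gives $\mu_{P}=2\xi_{\min}(1-\lambda_{1}/\lambda_{2})+\order(\rho_{q}-\lambda_{1})$. The expression for $L_{P}$ is already affine in $\rho_{q}-\lambda_{1}$, so its expansion is immediate: $L_{P}=2\xi_{\max}(1-\lambda_{1}/\lambda_{n})+\order(\rho_{q}-\lambda_{1})$.

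Next, I would form the quotient $\kappa_{P}=L_{P}/\mu_{P}$ and apply the elementary identity $(\alpha+\order(\epsilon))/(\beta+\order(\epsilon))=\alpha/\beta+\order(\epsilon)$, valid because the denominator $2\xi_{\min}(1-\lambda_{1}/\lambda_{2})$ is bounded away from zero (since $\lambda_{1}<\lambda_{2}$ and $\xi_{\min}>0$). This produces the stated expansion for $\kappa_{P}$ in terms of $\iota_{\xi}=\xi_{\max}/\xi_{\min}$.

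Finally, for the case $P=I$, I would establish $\xi_{\min}=\lambda_{2}+\order(\rho_{q}-\lambda_{1})$ and $\xi_{\max}=\lambda_{n}+\order(\rho_{q}-\lambda_{1})$. Since $\widetilde{Q}=[q,Q]$ is $M$-orthogonal, the matrix $\widetilde{Q}^{\Ttran}A\widetilde{Q}$ has eigenvalues $\lambda_{1},\dotsc,\lambda_{n}$ and decomposes in block form with $\rho_{q}=q^{\Ttran}Aq$ in the $(1,1)$ entry and $B$ as the trailing principal submatrix. Cauchy interlacing yields $\lambda_{i}\le\xi_{i}\le\lambda_{i+1}$ for the eigenvalues $\xi_{1}\le\dotsb\le\xi_{n-1}$ of $B$, while the trace identity $\sum_{i=1}^{n-1}\xi_{i}=\sum_{i=1}^{n}\lambda_{i}-\rho_{q}$ forces $\sum_{i=1}^{n-1}(\lambda_{i+1}-\xi_{i})=\rho_{q}-\lambda_{1}$; each summand being nonnegative, every gap is bounded by $\rho_{q}-\lambda_{1}$. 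Substituting these into the general formula for $\kappa_{P}$, the factor $\iota_{\xi}=\lambda_{n}/\lambda_{2}+\order(\rho_{q}-\lambda_{1})$ combines with $(1-\lambda_{1}/\lambda_{n})/(1-\lambda_{1}/\lambda_{2})$, and telescoping the $\lambda_{n}$ and $\lambda_{2}$ factors produces $\kappa_{I}=(\lambda_{n}-\lambda_{1})/(\lambda_{2}-\lambda_{1})+\order(\rho_{q}-\lambda_{1})$. The main technical step is the $\xi_{\max}$ estimate, since \Cref{eigB} controls only $\xi_{\min}$; the trace-plus-interlacing argument above is the cleanest way to handle the upper end of the spectrum.
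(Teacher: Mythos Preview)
Your proposal is correct. The paper states this corollary without proof, simply as a consequence of \Cref{convexP}; your Taylor expansions of $\mu_P$, $L_P$, and $\kappa_P$ are exactly what the paper leaves implicit.

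For the $P=I$ case, your trace-plus-interlacing argument actually goes beyond what the paper hints at. The only reference given is \Cref{eigB}, which yields $\xi_{\min}\ge\lambda_2-(\rho_q-\lambda_1)$ and hence (with the interlacing bound $\xi_{\min}\le\lambda_2$) the expansion $\xi_{\min}=\lambda_2+\order(\rho_q-\lambda_1)$. But \Cref{eigB} says nothing about $\xi_{\max}$, and the coarse bound $\lambda_{\max}(B)\le\lambda_n$ used elsewhere in the proof of \Cref{convexP} does not pin $\xi_{\max}$ to within $\order(\rho_q-\lambda_1)$ of $\lambda_n$. Your trace identity $\sum_{i=1}^{n-1}(\lambda_{i+1}-\xi_i)=\rho_q-\lambda_1$ with nonnegative summands is a clean way to close this gap and supplies a step the paper leaves to the reader.
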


\begin{corollary}
	\label{revLip}
	Under condition \eqref{eq:positivemuP}, for any $y\in\mathcal{Y}$,
	\begin{equation}\label{eq:phi}
		\phi(y)-\phi(y_{*}) \leq \frac{L_{P}}{2}\norm{y-y_{*}}_{P}^{2}.
	\end{equation}
	Reversely, for any $y\in\R^{n - 1}$, if
	\begin{equation}\label{eq:y}
		\norm{y-y_{*}}_{P}^{2} \leq \frac{2(\rho_{q}-\lambda_{1})}{L_{P}},
	\end{equation}
	then $y\in\mathcal{Y}$.
\end{corollary}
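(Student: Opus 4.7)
\textbf{Proof plan for Corollary~\ref{revLip}.}
The first inequality \eqref{eq:phi} is a direct consequence of the $L_{P}$-smoothness of $\phi$ on $\mathcal{Y}$. Theorem~\ref{convexP} gives $\nabla^{2}\phi(y)\preceq L_{P}P$ for every $y\in\mathcal{Y}$, and by Proposition~\ref{prop.convexcharac} (the upper bound in \eqref{cvxmuL1} read with the $P$-inner-product) this is equivalent to
\begin{equation*}
\phi(y)-\phi(y_{*})-\dual{\nabla\phi(y_{*}),y-y_{*}}\leq \frac{L_{P}}{2}\norm{y-y_{*}}_{P}^{2},
\end{equation*}
which I can obtain by integrating the Hessian bound along the segment $[y_{*},y]$; this segment stays in $\mathcal{Y}$ by the convexity established in Theorem~\ref{lemaux}. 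I would then observe that $y_{*}=\psi(u_{1})$ satisfies $\phi(y_{*})=\Rq(u_{1})=\lambda_{1}<\rho_{q}$ by Proposition~\ref{eqRqPhi} together with condition \eqref{eq:positivemuP}, so $y_{*}$ lies strictly in the interior of the sublevel set $\mathcal{Y}=\{\phi\leq\rho_{q}\}$ and is an unconstrained critical point; hence $\nabla\phi(y_{*})=0$, the linear term drops, and \eqref{eq:phi} follows.

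For the converse inclusion \eqref{eq:y}, the plan is a boundary-crossing argument by contradiction. Assume $y\in\R^{n-1}$ obeys $\norm{y-y_{*}}_{P}^{2}\leq 2(\rho_{q}-\lambda_{1})/L_{P}$ but $y\notin\mathcal{Y}$, so that $\phi(y)>\rho_{q}$. Parameterize the segment by $z(t)=y_{*}+t(y-y_{*})$ for $t\in[0,1]$; since $\phi(z(0))=\lambda_{1}<\rho_{q}$ and $\phi(z(1))>\rho_{q}$, continuity furnishes a smallest $t_{*}\in(0,1)$ with $\phi(z(t_{*}))=\rho_{q}$, and closedness of the sublevel set forces $z(t_{*})\in\mathcal{Y}$. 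Applying \eqref{eq:phi} at $z(t_{*})$ then yields
\begin{equation*}
\rho_{q}-\lambda_{1}=\phi(z(t_{*}))-\phi(y_{*})\leq \frac{L_{P}}{2}\norm{z(t_{*})-y_{*}}_{P}^{2}=\frac{L_{P}}{2}t_{*}^{2}\norm{y-y_{*}}_{P}^{2}\leq t_{*}^{2}(\rho_{q}-\lambda_{1}),
\end{equation*}
so $t_{*}\geq 1$, contradicting $t_{*}<1$. The degenerate case $\rho_{q}=\lambda_{1}$ collapses both bounds to $y=y_{*}\in\mathcal{Y}$.

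The only genuinely non-routine point is the identification of $y_{*}$ as an \emph{interior} minimizer of $\phi$ on $\mathcal{Y}$, which is what allows $\nabla\phi(y_{*})=0$; this ultimately rests on the strict gap $\lambda_{1}<\rho_{q}$ supplied by \eqref{eq:positivemuP}. Once Part~1 is in place, Part~2 is a clean convex-analytic reversal of the Lipschitz inequality that uses Part~1 as a black box, invoking only continuity of $\phi$ and closedness of $\mathcal{Y}$.
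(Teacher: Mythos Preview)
Your proposal is correct and follows essentially the same route as the paper: both obtain \eqref{eq:phi} from the $L_P$-smoothness in Theorem~\ref{convexP} together with $\nabla\phi(y_*)=0$, and both prove the reverse inclusion by a contradiction that locates a point on the segment from $y_*$ where $\phi$ equals $\rho_q$ and then feeds that point back into \eqref{eq:phi}. Your parameterization by $t_*$ and the paper's choice of $\widehat{y}_2$ are the same object.

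One minor remark: you need not work to establish that $y_*$ is an \emph{interior} point of $\mathcal{Y}$ in order to get $\nabla\phi(y_*)=0$. Since $\phi(y)=\Rq(Qy+q)\geq\lambda_1=\phi(y_*)$ for every $y\in\R^{n-1}$, the point $y_*$ is an unconstrained global minimizer of $\phi$ on all of $\R^{n-1}$, and the gradient vanishes regardless of whether $\rho_q>\lambda_1$. The paper simply asserts $\nabla\phi(y_*)=0$ for this reason.
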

\begin{proof}
	In \Cref{lemaux}, it has been proved that 
	\begin{equation*}
		\mathcal{Y}=\{y\in\R^{n - 1}\mid \lambda_{1}\leq \phi(y)\leq \rho_{q}\}.
	\end{equation*}
	By $\nabla \phi(y_{*})=\zero$, the estimation \Cref{eq:phi} is directly obtained from \Cref{convexP}. 

	Now if the condition \Cref{eq:y} is satisfied, let 
	\begin{equation*}
		\mathcal {D}\defi \biggl\{y \Bigm\vert  \norm{y-y_{*}}_{P}^{2} \leq \frac{2(\rho_{q}-\lambda_{1})}{L_{P}}\biggr\},
	\end{equation*}
	\ie $\mathcal{D}$ is the set for $y$ satisfying \Cref{eq:y}. We will show that $  \mathcal{D} \subset \mathcal{Y}$. When $\rho_{q}=\lambda_{1}$, it is obvious $\mathcal{D}=\mathcal{Y}=\{y_{*}\}$. If $\rho_{q}>\lambda_{1}$ and $\mathcal{D} \not\subset {\mathcal Y}$, then there exists $\widehat{y}_{1}\in\mathcal{D}$ but $\phi(\widehat{y}_{1})> \rho_{q}$. Note that $\phi(y_{*})=\lambda_{1}<\rho_{q}$, by the intermediate value theorem and the convexity of $\mathcal{D}$, there exists $\widehat{y}_{2}\in\mathcal{D}$ such that $\phi(\widehat{y}_{2})=\rho_{q}$ and
	\begin{equation*}
		\norm{\widehat{y}_{2}-y_{*}}_{P}^{2} < \norm{\widehat{y}_{1}-y_{*}}_{P}^{2}\leq \frac{2(\rho_{q}-\lambda_{1})}{L_{P}},
	\end{equation*}
	where the last inequalities use the fact $\widehat{y}_{1}\in\mathcal{D}$. Notice that $\widehat{y}_{2}\in \mathcal{Y}$ due to $\phi(\widehat{y}_{2})=\rho_{q}$, we can obtain 
  	\begin{equation*}
		\phi(\widehat{y}_{2})-\lambda_{1} \leq \frac{L_{P}}{2}\norm{\widehat{y}_{2}-y_{*}}_{P}^{2}<\rho_{q}-\lambda_{1}
	\end{equation*}
by \Cref{eq:phi}, which is contradicted $\phi(\widehat{y}_{2})=\rho_{q}$.
\end{proof}

\subsection{Implicit convexity of the eigenvalue problem}
Here is the main result on the implicit convexity of
the eigenvalue problem \eqref{oriprob}.

\begin{theorem} \label{thmequiv}
Suppose $\rho_{q}=\Rq(q)$ satisfies
\begin{equation} \label{ICrhoq}
\lambda_{1}\leq \rho_{q}
\leq \lambda_{1}+\frac{\lambda_{2}-\lambda_{1}}{2+\chi_{P}},
\end{equation}
where $\chi_P$ is defined in \eqref{eq:chiP}, then
\begin{enumerate}
\item the region $\mathcal{Y}$ is a convex set.
\item the auxiliary function $\phi$ is convex in $\mathcal{Y}$ with $P$ inner-product,
\item the auxiliary problem \Cref{auxprob} has a unique solution $y_{*}$,
\item the eigenvector $u_1$ of the eigenvalue problem~\eqref{oriprob}
is given by $u_{1} = \psi^{\dagger}(y_{*})$.
\end{enumerate}
\end{theorem}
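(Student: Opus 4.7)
The theorem is essentially a consolidation of results developed throughout Section~2: I would prove each of the four items by citing the machinery already built, so the bulk of the work is wiring the pieces together rather than new calculation.

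The plan is to address items 1 and 2 first since they follow directly from previous results. For item 1, I would note that $\chi_P > 0$ in \eqref{eq:chiP}, so condition \eqref{ICrhoq} implies \eqref{eq:rhoq1}, and convexity of $\mathcal{Y}$ is then exactly \Cref{lemaux}(2). For item 2, condition \eqref{ICrhoq} is (up to the boundary case) the hypothesis of \Cref{convexP}, which delivers $\mu_P P \preceq \nabla^{2}\phi(y) \preceq L_P P$ for all $y \in \mathcal{Y}$; since $\mu_P \geq 0$ under \eqref{ICrhoq}, this second-order characterization in the $P$-inner-product gives convexity of $\phi$ on $\mathcal{Y}$.

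For items 3 and 4, I would use the bijection $\psi \colon \Sp \to \R^{n-1}$ of \Cref{lemPsi} together with the Rayleigh quotient identity \eqref{eq:rqxphiy} of \Cref{eqRqPhi}. The key observation is that $u_1 \in \mathcal{X}$: we have $q^{\Ttran} M u_1 > 0$ by the standing assumption on $q$, and $\Rq(u_1) = \lambda_1 \le \rho_q$ by \eqref{ICrhoq}, so $u_1 \in \Sp \cap \{\Rq \le \rho_q\} = \mathcal{X}$. Setting $y_0 \defi \psi(u_1) \in \mathcal{Y}$, \Cref{eqRqPhi} gives $\phi(y_0) = \lambda_1$. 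Because $\phi = \Rq \circ \psi^{\dagger}$ on $\R^{n-1}$ and $u_1$ is the unique minimizer of $\Rq$ on $\Sp$ (by simplicity of $\lambda_1$, from $\lambda_1 < \lambda_2$), $y_0$ is in fact the unique global minimizer of $\phi$ over all of $\R^{n-1}$, hence also the unique minimizer on $\mathcal{Y}$. This gives existence and uniqueness of $y_*$ in item 3 with $y_* = y_0$, and then $\psi^{\dagger}(y_*) = \psi^{\dagger}(\psi(u_1)) = u_1$ by \Cref{lemPsi}(2), which is item 4.

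The main subtlety I expect is handling item 3 at the boundary $\rho_q = \lambda_1 + (\lambda_2-\lambda_1)/(2+\chi_P)$, where $\mu_P$ in \eqref{defCP} degenerates to $0$ and strong convexity from \Cref{convexP} no longer directly yields uniqueness. The argument above sidesteps this by going through the global Rayleigh quotient rather than relying on strong convexity on $\mathcal{Y}$: uniqueness of $u_1$ as eigenvector transfers to uniqueness of $y_*$ through the bijection $\psi$, independent of how large $\mu_P$ is. A secondary cosmetic point is that \eqref{ICrhoq} uses ``$\le$'' while \eqref{eq:positivemuP} uses ``$<$''; since the conclusions of \Cref{convexP} needed here (the Hessian bound with $\mu_P \ge 0$, not $>0$) remain valid at the endpoint, this distinction is harmless.
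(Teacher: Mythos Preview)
Your proof is correct, and for items 1--2 it matches the paper exactly. For items 3--4 you take a genuinely different route. The paper first invokes convex optimization theory (strong convexity of $\phi$ on the convex set $\mathcal{Y}$, via \cite[Thm~2.4]{Nocedal2006}) to obtain existence and uniqueness of a minimizer $y_*$, and only afterward shows $y_* = \psi(u_1)$ by a squeeze argument: with $y_{**}=\psi(u_1)$ and $x_*=\psi^\dagger(y_*)$,
\[
\phi(y_*)\le\phi(y_{**})=\Rq(u_1)\le\Rq(x_*)=\phi(y_*).
\]
You instead exploit the bijection $\psi^\dagger\colon\R^{n-1}\to\Sp$ and the simplicity of $\lambda_1$ to identify $\psi(u_1)$ directly as the unique \emph{global} minimizer of $\phi$, then note it lies in $\mathcal{Y}$. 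Your approach is more elementary in that it does not need strong convexity (or any convexity) for items 3--4, and as you correctly observe, this cleanly resolves the boundary case $\rho_q=\lambda_1+(\lambda_2-\lambda_1)/(2+\chi_P)$ where $\mu_P=0$ and the paper's appeal to strong convexity for uniqueness becomes delicate. The paper's approach, on the other hand, keeps the argument self-contained within the convex-optimization framework it is building toward.
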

\begin{proof}
	The first two items have been proved in \Cref{lemaux,convexP}.
	
	For item 3,
	according to the traditional theory about convex optimization \cite[Thm~2.4]{Nocedal2006}, the auxiliary problem \Cref{auxprob} has a unique solution
	$y_{*}=\psi(x_{*})$.
	
	For item 4, 
	let $y_{**}=\psi(u_{1})$, by $u_{1}\in\mathcal{X}$ and \Cref{eqRqPhi},
	\begin{equation*}
		\phi(y_{*})\leq \phi(y_{**})=\Rq(u_{1})\leq \Rq(x_{*})=\phi(y_{*}),
	\end{equation*}
	which means $y_{*}=y_{**}=\psi(u_{1})$. The theorem is proved by $u_{1}=\psi^{\dagger}(\psi(u_{1}))=\psi^{\dagger}(y_{*})$.
\end{proof}

\begin{remark}
Actually, only the convexity of $\phi$ depends on the inner-product. Other items, including the convexity of $\mathcal{Y}$, the existence and uniqueness of $y_{*}$, and $u_{1}=\psi^{\dagger}(y_{*})$ only require $\lambda_{1}\leq \rho_{q}<(\lambda_{1}+\lambda_{2})/2$.
\end{remark}

% ------------------------------

%\newpage
\section{Locally Optimal Nestrov Accelerated Gradient descent 
methods for convex optimization}
\label{sec:nagflow}

In this section, we will discuss the convex optimization problem:
\begin{equation} \label{cvxopt}
\min_{y\in\mathcal{Y}}\, \phi(y),
\end{equation}
where $\phi(y)$ is a smooth strongly convex function defined on a convex
set $\mathcal{Y}$. To save notations, we reuse $\phi$, $\mathcal{Y}$ and so on 
in \Cref{sec:impconv}. 
Following the presentation in \cite{Luo2021}, we will review some results about NAG methods with a dynamical system analogy first proposed in \cite{Su2014}. Then, we propose a new discretization scheme and analyze its rate of convergence.

%\begin{remark}
%	Throughout this section, we use $\norm{\cdot}$ to denote the norm for simplicity. Keep in mind that it can be some general norm, such as the $P$ norm in the last section.
%\end{remark}

\subsection{NAG methods with a dynamical system analogy}

\paragraph{Dynamical system.} 
Consider the following first-order dynamical system of $\bigl(y(t),s(t)\bigr)$:
\begin{subnumcases}{\label{NAGflow}}
\frac{\de y(t)}{\de t} =s(t)-y(t), \label{NAGflowa}  \\
\frac{\de s(t)}{\de t} =y(t)-s(t)-\frac{1}{\mu}\nabla \phi(y(t)), \label{NAGflowb}
\end{subnumcases}
with initial conditions $y(0)=y_{0}$ and $s(0)=s_{0}$,
where $t>0$, $\phi$ and $\mu$ satisfy \Cref{cvxmuL1}.
To establish the connection between solution of
the optimization~\eqref{cvxopt} and the dynamical system \eqref{NAGflow},
let us consider the following so-called Lyapunov function:
\begin{equation} \label{eq:lyapunov}
\Ly(t) = \phi\bigl(y(t)\bigr)-\phi(y_{*})+\frac{\mu}{2}\norm{s(t)-y_{*}}^{2}\geq 0,
\end{equation}
where $y_{*}$ is the {unique} minimizer of \Cref{cvxopt}.
It is shown in \cite[Lem~2]{Luo2021} that
the Lyapunov function $\Ly(t)$ exponentially decays:
\begin{equation} \label{eq:lyafunbd}
\Ly(t)\leq e^{-t}\Ly(0).
\end{equation}
Note that $\Ly(t)\geq 0$, combining \Cref{eq:lyafunbd,eq:lyapunov}, we know 
\begin{equation*}
	\lim_{t\to\infty}\phi\bigl(y(t)\bigr)-\phi(y_{*})\leq \lim_{t\to\infty}\Ly(t) = 0.
\end{equation*}
Then we know  
$y(t) \to y_{*}$ as $t\to\infty$ since $y_{*}$ is the unique minimizer.\footnote{The algorithm can 
be understood as ODEs, which converge
to an equilibrium in the continuous time domain. This allows
a unique view and understanding of the discrete iterative process.}

\paragraph{Discrete schemes.}
There are a number of discrete schemes for the dynamical
system \eqref{NAGflow}  \cite{Su2014,Odonoghue2015,Shi2021,Muehlebach2021,Luo2021}.
To balance the efficiency and stability,
we focus on the following corrected semi-implicit scheme
\cite[(96--97)]{Luo2021}.
Given the initial $(s_{0}, y_{0}) \in (\mathcal{Y},\mathcal{Y})$,
$\mu$ and $L$ as defined in \Cref{cvxmuL1}, step--size $\tau >0$,
the corrected semi-implicit scheme generates the iterates
\begin{equation*}
(\overline{y}_k, s_{k+1}, y_{k+1})
\end{equation*}
for $k = 0,1,2,\dotsc$, by the recursions
\begin{subnumcases}{\label{GC}}
\frac{\overline{y}_{k}-y_{k}}{\tau }=s_{k}-\overline{y}_{k}, \label{GCa} \\
\frac{s_{k+1}-s_{k}}{\tau }=(\overline{y}_{k}-s_{k})
-\frac{1}{\mu}\nabla \phi(\overline{y}_{k}), \label{GCb} \\
\text{update $y_{k+1}$ satisfying } \phi(y_{k+1})\leq
\phi(\overline{y}_{k})-\frac{1}{2L}\norm{\nabla\phi(\overline{y}_{k})}^{2}.
\label{GCc}
\end{subnumcases}
When the first order characterization \Cref{cvxmuL1} holds globally, a popular choice for $y_{k+1}$ in the step \eqref{GCc} is a gradient step \cite[(2.2.19)]{Nesterov2018}, \ie
\begin{equation*}
y_{k+1} = \overline{y}_{k}-\frac{1}{L}\nabla\phi(\overline{y}_{k}).
\end{equation*}
There are also some other implementations of \eqref{GCc}
under the globally first order characterization \cite{Luo2021}.
However, when \Cref{cvxmuL1} only holds locally, the discussion
is relatively few.
We will give a new implementation
of \eqref{GCc} and analyze its convergence.

\medskip

The following theorem from \cite[Thm~7]{Luo2021} proves the convergence rate 
of the scheme~\eqref{GC}.

\begin{theorem}[\!\!{\cite[Thm~7]{Luo2021}}] \label{GCprop}
Let $y_{0}\in\mathcal{Y}$ and $s_{0}\in\mathcal{Y}$ be the
initials and $\tau >0$ be the step--size. Assume that
\begin{itemize} 
\item the step--size $\tau $ satisfies $0<\tau \leq \kappa^{-1/2}$, where
$\kappa=L/\mu$, and $\mu$ and $L$ as defined in \Cref{cvxmuL1}.
\item all iterates $(\overline{y}_k, y_{k+1}, s_{k+1})$ from
the corrected semi-implicit scheme \eqref{GC} lie in $\mathcal{Y}$.
\end{itemize} 
Then
\begin{equation} \label{expdecay}
\Ly_{k+1}\leq (1-\tau )\Ly_{k},
\end{equation}
where
\begin{equation} \label{Ly}
\Ly_{k}=\phi(y_{k})-\phi(y_{*})+\frac{\mu}{2}\norm{s_{k}-y_{*}}^{2}.
\end{equation}
\end{theorem}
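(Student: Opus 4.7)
The plan is to use the Lyapunov-decrement technique and show directly that $\Delta_k \defi \Ly_{k+1} - (1-\tau)\Ly_k \le 0$. Writing $g = \nabla\phi(\overline{y}_k)$, I would split $\Delta_k$ into an objective piece $\phi(y_{k+1}) - (1-\tau)\phi(y_k) - \tau\phi(y_*)$ and a quadratic potential piece $\tfrac{\mu}{2}\bigl(\|s_{k+1} - y_*\|^2 - (1-\tau)\|s_k - y_*\|^2\bigr)$. The strategy is that the gradient cross-terms appearing in these two pieces cancel exactly, leaving only a quadratic in $\|g\|^2$ whose sign is controlled by the step-size assumption.

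For the objective piece I would first apply the descent guarantee \eqref{GCc} to bound $\phi(y_{k+1})$ by $\phi(\overline{y}_k) - \tfrac{1}{2L}\|g\|^2$, then apply the strong-convexity lower bound \eqref{cvxmuL1} at $\overline{y}_k$ against both $y_k$ and $y_*$, arriving at an upper bound of the form
\[
-\tfrac{1}{2L}\|g\|^2 + (1-\tau)\langle g, \overline{y}_k - y_k\rangle + \tau\langle g, \overline{y}_k - y_*\rangle - \tfrac{(1-\tau)\mu}{2}\|y_k - \overline{y}_k\|^2 - \tfrac{\tau\mu}{2}\|\overline{y}_k - y_*\|^2.
\]
This is precisely where the hypothesis that $\overline{y}_k, y_{k+1}, s_{k+1} \in \mathcal{Y}$ is required, since the characterization \eqref{cvxmuL1} is only assumed to hold on $\mathcal{Y}$.

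For the potential piece I would substitute \eqref{GCb} in the form $s_{k+1} - y_* = (1-\tau)(s_k - y_*) + \tau(\overline{y}_k - y_*) - (\tau/\mu)g$, expand $\|s_{k+1}-y_*\|^2$, and use convexity of $\|\cdot\|^2$ to bound $\|(1-\tau)(s_k - y_*) + \tau(\overline{y}_k - y_*)\|^2$ by $(1-\tau)\|s_k - y_*\|^2 + \tau\|\overline{y}_k - y_*\|^2$. After multiplying by $\mu/2$ and subtracting $(1-\tau)\tfrac{\mu}{2}\|s_k-y_*\|^2$, the potential piece is bounded by
\[
\tfrac{\mu\tau}{2}\|\overline{y}_k - y_*\|^2 + \tfrac{\tau^2}{2\mu}\|g\|^2 - \tau\langle g, (1-\tau)(s_k - y_*) + \tau(\overline{y}_k - y_*)\rangle.
\]
The identity $s_k - y_* = (\overline{y}_k - y_*) + \tau^{-1}(\overline{y}_k - y_k)$, which is a rearrangement of \eqref{GCa}, then rewrites the inner product so that the gradient cross-terms here become $-(1-\tau)\langle g, \overline{y}_k - y_k\rangle - \tau\langle g, \overline{y}_k - y_*\rangle$, cancelling exactly against those from the objective piece; the two copies of $\tfrac{\mu\tau}{2}\|\overline{y}_k - y_*\|^2$ likewise cancel.

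After these cancellations the residual simplifies to $\Delta_k \le \bigl(\tfrac{\tau^2}{2\mu} - \tfrac{1}{2L}\bigr)\|g\|^2 - \tfrac{(1-\tau)\mu}{2}\|y_k - \overline{y}_k\|^2$, and the hypothesis $\tau \le \kappa^{-1/2} = \sqrt{\mu/L}$ makes the coefficient of $\|g\|^2$ nonpositive, while the second term is automatically nonpositive since $\kappa \ge 1$ forces $\tau \le 1$. I expect the main obstacle to be tracking the exact constants in the cross-term cancellation: this is classical but fragile algebra, and getting any one coefficient wrong breaks the cancellation that is the heart of the acceleration analysis.
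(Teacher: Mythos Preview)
Your proof is correct. The paper does not supply its own proof of this theorem; it simply cites \cite[Thm~7]{Luo2021} and uses the result as a black box. Your argument is the standard Lyapunov-decrement computation for this class of accelerated schemes and is exactly what one would expect the cited reference to contain: bound the objective increment via the descent condition \eqref{GCc} together with strong convexity at $\overline{y}_k$ against $y_k$ and $y_*$, expand the potential increment using the update \eqref{GCb}, and use the relation \eqref{GCa} in the form $s_k - y_* = (\overline{y}_k - y_*) + \tau^{-1}(\overline{y}_k - y_k)$ to engineer the cross-term cancellation. The residual $\bigl(\tfrac{\tau^2}{2\mu} - \tfrac{1}{2L}\bigr)\|g\|^2 - \tfrac{(1-\tau)\mu}{2}\|y_k - \overline{y}_k\|^2$ is indeed nonpositive under $\tau \le \kappa^{-1/2} \le 1$, so the argument closes.

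One small point worth making explicit: when you invoke \eqref{cvxmuL1} at $\overline{y}_k$ against $y_k$, you need $y_k \in \mathcal{Y}$, which is not literally listed in the hypothesis ``all iterates $(\overline{y}_k, y_{k+1}, s_{k+1})$ lie in $\mathcal{Y}$''. Of course it follows immediately by induction from $y_0 \in \mathcal{Y}$ and $y_{k+1} \in \mathcal{Y}$, and $y_* \in \mathcal{Y}$ holds since it is the minimizer on $\mathcal{Y}$; but since you flag this as ``precisely where the hypothesis is required'', it is worth spelling out. Likewise, the convexity bound $\|(1-\tau)a + \tau b\|^2 \le (1-\tau)\|a\|^2 + \tau\|b\|^2$ needs $0 \le \tau \le 1$, which you only mention at the end; it would be cleaner to note $\tau \le \kappa^{-1/2} \le 1$ up front.
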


By the inequality \eqref{expdecay}, the convergence of discrete Lyapounov function  implies the convergence of $y_{k}$ to $y_{*}$. Taking the optimal step--size as $\tau =\kappa^{-1/2}$, we can achieve the acceleration by improving the rate of convergence from $1-2(\kappa+1)^{-1}$ of the gradient method \cite[Thm~2.1.15]{Nesterov2018} to $1-\kappa^{-1/2}$.

\subsection{LONAG scheme and convergence analysis}

There are two issues with the corrected semi-implicit scheme \eqref{GC}: (1) no guarantee
for the monotonic declination of $\phi(y_{k})$,
which actually may fluctuate \cite{Park2021}, and (2) 
assumptions about all iterates
$(\overline{y}_{k}, s_{k+1}, y_{k+1})$ in $\mathcal{Y}$ 
are necessary for the convergence.
In this section, we propose a new scheme
to guarantee $\phi(y_{k})$ decreasing monotonically and
analyze its convergence with conditions only about
the initial values $(s_{0},y_{0})$.

\paragraph{LONAG scheme.} 
Given the initial $(s_{0}, y_{0}) \in (\mathcal{Y},\mathcal{Y})$,
$\mu$ and $L$ as defined in \Cref{cvxmuL1}, step--size $\tau >0$,
we propose to replace the update~\eqref{GCc} with
a locally optimal correction, and generate
\[
(\overline{y}_k, s_{k+1}, y_{k+1})
\]
for $k = 0,1,2,\dotsc,$ by the recursions
\begin{subnumcases}{\label{LO}}
\frac{\overline{y}_{k}-y_{k}}{\tau }=s_{k}-\overline{y}_{k}, \label{LOa} \\
\frac{s_{k+1}-s_{k}}{\tau }=(\overline{y}_{k}-s_{k})-\frac{1}{\mu}\nabla
\phi(\overline{y}_{k}), \label{LOb} \\
y_{k+1}=\argmin_{y\in \spa{y_{k},\overline{y}_{k},
\nabla\phi(\overline{y}_{k})}}\phi(y). \label{LOc}
\end{subnumcases}
The update~\eqref{LOc} for $y_{k+1}$ is
inspired by the LOPCG \cite{Knyazev2001}. Since the scheme \eqref{LO} 
is a combination of the locally optimal step and the NAG flow, 
we name it Locally Optimal Nesterov Accelerated Gradient (LONAG).

%\bigskip

\paragraph{Monotonicity of LONAG.} 
Due to the locally optimal step \Cref{LOc}, the monotonically declination
of function values $\phi(y_{k})$ can be obtained directly.

\begin{proposition} \label{Lyaux0}
The iterates $\{y_{k+1}\}$ from the LONAG \eqref{LO} satisfies
\begin{equation*}
\phi(y_{k+1})\leq \phi(y_{k})\leq  \dotsb \leq \phi(y_{0}).
\end{equation*}
\end{proposition}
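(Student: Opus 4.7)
The plan is essentially immediate: the update rule \Cref{LOc} defines $y_{k+1}$ as the minimizer of $\phi$ over the linear subspace $\spa{y_{k},\overline{y}_{k},\nabla\phi(\overline{y}_{k})}$, and the previous iterate $y_{k}$ itself lies in this subspace, via the trivial linear combination with coefficients $1$, $0$, $0$. Hence by the definition of $\argmin$, one immediately obtains $\phi(y_{k+1})\leq \phi(y_{k})$. Iterating this inequality for $k=0,1,2,\dotsc$ yields the claimed monotone chain $\phi(y_{k+1})\leq\phi(y_{k})\leq\dotsb\leq\phi(y_{0})$.

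There is no meaningful obstacle here: the monotonicity is a free byproduct of the locally optimal correction step, and it uses neither the auxiliary iterate $s_{k}$, the step--size $\tau$, nor the convexity constants $\mu$ and $L$. Those ingredients will only be needed later to quantify the \emph{rate} of decrease of $\phi(y_{k})$ and of the Lyapunov function \Cref{Ly} (via an accelerated bound analogous to \Cref{expdecay}), not to obtain the monotonicity itself. The only mild point worth flagging is that $\spa{\cdot}$ must be interpreted as a linear span so that $y_{k}$ is admissible as a candidate minimizer; this is the standard convention in the LOPCG literature that inspired \Cref{LOc}, and moreover it guarantees that a feasible candidate always exists so the $\argmin$ in \Cref{LOc} is never vacuous, even in situations where the strong convexity of $\phi$ only holds locally on $\mathcal{Y}$ (since one can always fall back on $y=y_{k}$).
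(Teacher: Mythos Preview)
Your proof is correct and matches the paper's own argument: the paper simply notes that the result is a direct consequence of the locally optimal step \eqref{LOc}, since $y_{k}$ lies in the search subspace $\spa{y_{k},\overline{y}_{k},\nabla\phi(\overline{y}_{k})}$ over which $y_{k+1}$ is the minimizer. Your additional remarks on what is \emph{not} needed (the convexity constants, the step--size, the auxiliary iterate $s_{k}$) are accurate and helpful context, but the core reasoning is identical.
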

\begin{proof}
It is a direct result the fact
$y_{k+1}\in\spa{y_{k},\overline{y}_{k},\nabla\phi(\overline{y}_{k})}$
from the locally optimal step \eqref{LOc}. 
\end{proof}

\begin{remark}
As a consequence of \Cref{Lyaux0}, when the level set property
\begin{equation*}
\bigl\{y\mid \phi(y)\leq \phi(y_{0})\bigr\}   \subset \mathcal{Y}
\end{equation*}
holds with proper choice of the initial $y_0$, 
the last step \eqref{LOc} is equivalent to
\begin{equation*}
y_{k+1}=\argmin_{y\in \mathcal{Y}\cap \spa{y_{k},\overline{y}_{k},
\nabla\phi(\overline{y}_{k})}}\phi(y).
\end{equation*}
\end{remark}

%\bigskip

\paragraph{Containment and convergence.} 
For the convergence of LONAG, we would like to use the convergence of
the corrected semi-implicit scheme \eqref{GC} shown in \Cref{GCprop}. The main difficulty is that there is no prior assumption the containment of iterates $(\overline{y}_{k}, s_{k+1}, y_{k+1})$. However, once the convergence of the discrete Lyapounov function $\Ly_{k}$ is proved as \cref{expdecay}, we know that $y_{k}$ and $s_{k}$ can not be too far from the minimizer $y_{*}$. 
A lucky fact is that we can prove these two properties, \ie containment and convergence, recursively when initials $(y_{0},s_{0})$ and step--size $\tau $ are properly selected.

\begin{theorem} \label{thmInvarset}
Assume that
\begin{itemize} 
\item the initial $(s_0, y_{0})$ satisfies
\begin{equation} \label{invarset}
s_{0}\in\mathcal{B}_{R_{1}} 
\quad \mbox{and} \quad
\bigl\{y\mid \phi(y)\leq \phi(y_{0})\bigr\} \subset
\mathcal{B}_{R_{1}}\subset  \mathcal{B}_{R_{2}}  \subset \mathcal{Y},
\end{equation}
where $\mathcal{B}_{R}$ is a closed ball with center $y_{*}$,
the unique optimizer of \eqref{cvxopt}, and radius $R$:
\begin{equation*}
\mathcal{B}_{R}\defi \bigl\{ y\mid \norm{y-y_{*}}\leq R  \bigr\},
\end{equation*}
and 
\begin{equation} \label{defR}
R_{1}=\Bigl(\frac{2\Ly_{0}}{\mu}\Bigr)^{1/2}\quad\text{and}\quad
R_{2}=\max\bigg\{2R_{1},(1+\tau\kappa)R_{1}\bigg\},
\end{equation}
and $\Ly_{0}$ is the initial discrete Lyapunov function,
\ie $\Ly_{0}=\phi(y_{0})-\phi(y_{*})+\frac{\mu}{2}\norm{s_{0}-y_{*}}^{2}$.
\item the step--size $\tau $ satisfies $0<\tau \leq \kappa^{-1/2}$, 
where $\kappa=L/\mu$, and $\mu$ and $L$ as defined in \Cref{cvxmuL1}.
\end{itemize} 
Then the iterates $(\overline{y}_k, s_{k+1}, y_{k+1})$  with $k > 0$
generated by the LONAG \eqref{LO} satisfy
\begin{enumerate}[(a)]
\item $\overline{y}_{k}\in\mathcal{B}_{R_{1}}$
$y_{k+1}\in\mathcal{B}_{R_{1}}$.

\item $y_{k+1}$ satisfies the sufficient declination property
\eqref{GCc} of the corrected semi-implicit scheme \eqref{GC}.
\item $\Ly_{k+1}\leq (1-\tau )\Ly_{k}$, where $\Ly_{k}$ is defined as $\Ly_{k}=\phi(y_{k})-\phi(y_{*})+\frac{\mu}{2}\norm{s_{k}-y_{*}}^{2}$.
\item $s_{k+1}\in\mathcal{B}_{R_{1}}$.

\end{enumerate}
\end{theorem}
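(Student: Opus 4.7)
The plan is to prove all four claims jointly by strong induction on $k$, carrying the combined inductive hypothesis that $s_j, y_j \in \mathcal{B}_{R_1}$ for all $j \leq k$. The base case $k = 0$ is immediate from the assumption \eqref{invarset}: we have $s_0 \in \mathcal{B}_{R_1}$ by hypothesis, and $y_0$ lies in the sublevel set $\{\phi \leq \phi(y_0)\} \subset \mathcal{B}_{R_1}$. The inductive step will establish claims (a)--(d) in the following order: first (a) for $\overline{y}_k$, then (b), then (a) for $y_{k+1}$, then verification that $s_{k+1} \in \mathcal{Y}$ (which is needed to invoke \Cref{GCprop}), then (c), and finally (d).

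For claim (a) on $\overline{y}_k$, rewriting \eqref{LOa} as $\overline{y}_k = \frac{1}{1+\tau}y_k + \frac{\tau}{1+\tau}s_k$ exhibits $\overline{y}_k$ as a convex combination of two points in $\mathcal{B}_{R_1}$, so $\overline{y}_k \in \mathcal{B}_{R_1}$ by convexity of the ball. For claim (b), introduce the auxiliary gradient step $\widehat{y}_k \defi \overline{y}_k - \frac{1}{L}\nabla\phi(\overline{y}_k)$; using $\nabla\phi(y_*) = \zero$ and the Lipschitz bound \eqref{cvxmuL2}, we have $\|\nabla\phi(\overline{y}_k)\| \leq L\|\overline{y}_k - y_*\| \leq L R_1$, hence $\|\widehat{y}_k - y_*\| \leq 2R_1 \leq R_2$, so $\widehat{y}_k \in \mathcal{B}_{R_2} \subset \mathcal{Y}$. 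The upper bound in \eqref{cvxmuL1}, applied along the segment from $\overline{y}_k$ to $\widehat{y}_k$ (entirely inside $\mathcal{B}_{R_2} \subset \mathcal{Y}$), gives the standard one-step $L$-smoothness descent $\phi(\widehat{y}_k) \leq \phi(\overline{y}_k) - \frac{1}{2L}\|\nabla\phi(\overline{y}_k)\|^2$. Since $\widehat{y}_k \in \spa{y_k, \overline{y}_k, \nabla\phi(\overline{y}_k)}$, the locally optimal rule \eqref{LOc} yields $\phi(y_{k+1}) \leq \phi(\widehat{y}_k)$, which is exactly the sufficient-decrease property \eqref{GCc}.

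To finish, (a) for $y_{k+1}$ follows from \Cref{Lyaux0}: $\phi(y_{k+1}) \leq \phi(y_0)$ places $y_{k+1}$ in the sublevel set of $y_0$, hence in $\mathcal{B}_{R_1}$. Before invoking \Cref{GCprop}, I need $s_{k+1} \in \mathcal{Y}$: rearranging \eqref{LOb} gives $s_{k+1} = (1-\tau)s_k + \tau \overline{y}_k - \frac{\tau}{\mu}\nabla\phi(\overline{y}_k)$, so by the triangle inequality and $\|\nabla\phi(\overline{y}_k)\| \leq L R_1$,
\begin{equation*}
\|s_{k+1} - y_*\| \leq (1-\tau)R_1 + \tau R_1 + \tau\kappa R_1 = (1+\tau\kappa)R_1 \leq R_2,
\end{equation*}
so $s_{k+1} \in \mathcal{B}_{R_2} \subset \mathcal{Y}$. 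All hypotheses of \Cref{GCprop} are now in force (the LONAG iterate satisfies \eqref{GCc} by step (b), and every iterate lies in $\mathcal{Y}$), yielding (c): $\Ly_{k+1} \leq (1-\tau)\Ly_k$. Then (d) comes from noting that $\frac{\mu}{2}\|s_{k+1} - y_*\|^2 \leq \Ly_{k+1} \leq (1-\tau)\Ly_k \leq \Ly_0 = \frac{\mu}{2}R_1^2$, so $s_{k+1} \in \mathcal{B}_{R_1}$, closing the induction.

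The main subtlety, and likely the place where a careful reader will pause, is the interplay between the two radii $R_1$ and $R_2$. The ``persistent'' iterates $y_k, s_k, \overline{y}_k$ must stay in the smaller ball $\mathcal{B}_{R_1}$ for the induction to propagate, whereas the auxiliary gradient step $\widehat{y}_k$ and the pre-Lyapunov-decay bound on $s_{k+1}$ can only be controlled inside the larger ball $\mathcal{B}_{R_2}$; the definition $R_2 = \max\{2R_1, (1+\tau\kappa)R_1\}$ is chosen precisely to absorb these two excursions. The ordering of the subclaims matters: one cannot bound $\|s_{k+1}-y_*\|$ by $R_1$ until the Lyapunov decay is available, but the decay itself needs $s_{k+1} \in \mathcal{Y}$, which is why the intermediate bound via $\mathcal{B}_{R_2}$ is essential.
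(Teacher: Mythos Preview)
Your proof is correct and follows essentially the same approach as the paper's: induction on $k$ with the hypothesis $y_k,s_k\in\mathcal{B}_{R_1}$, obtaining $\overline{y}_k\in\mathcal{B}_{R_1}$ by convex combination, the auxiliary gradient step $\widehat{y}_k\in\mathcal{B}_{2R_1}\subset\mathcal{B}_{R_2}$ to establish \eqref{GCc}, $y_{k+1}\in\mathcal{B}_{R_1}$ via \Cref{Lyaux0} and the level-set inclusion, the crude triangle-inequality bound $s_{k+1}\in\mathcal{B}_{R_2}$, then \Cref{GCprop} for the Lyapunov decay, and finally $s_{k+1}\in\mathcal{B}_{R_1}$ from $\Ly_{k+1}\leq\Ly_0$. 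The only difference is the order in which you present the subclaims (you prove sufficient declination before bounding $s_{k+1}$, the paper does the reverse), which is immaterial.
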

\begin{proof}
The conclusions will be proved recursively. Let us assume that both $y_{k}$ and $s_{k}$ are in $\mathcal{B}_{R_{1}}$, which are satisfied if $k=0$,
then by \eqref{LOa}, we have
\[
\overline{y}_{k}= \frac{y_k}{1 + \tau } + \frac{\tau s_k}{1 + \tau }
 \in\mathcal{B}_{R_{1}}.
\]
 Rearranging \eqref{LOb}, we have
\begin{equation*}
s_{k+1} = (1-\tau )s_{k}+\tau \overline{y}_{k}
-\frac{\tau }{\mu}\nabla\phi (\overline{y}_{k}) .
\end{equation*}
Then we have
\begin{equation*}
\begin{aligned}
\norm{s_{k+1}-y_{*}}
& = \norm{(1-\tau )s_{k}+\tau \overline{y}_{k}
-\frac{\tau }{\mu}\nabla\phi (\overline{y}_{k}) - y_*} \\
& = \norm{(1-\tau )(s_{k}-y_{*})
           +\tau (\overline{y}_{k}-y_{*})
           -\frac{\tau }{\mu}\nabla\phi (\overline{y}_{k})} \\
&\leq
(1-\tau )\norm{s_{k}-y_{*}}+\tau \norm{\overline{y}_{k}-y_{*}} +
\frac{\tau }{\mu}\norm{\nabla\phi(\overline{y}_{k})}\\
&\leq (1-\tau )R_{1}+\tau R_{1}+\tau \kappa R_{1}\leq  R_{2},
\end{aligned}
\end{equation*}
where for the second inequality, we used the inequality
\Cref{normGradYBar} and $\kappa=L/\mu$, and for
the last inequality we used \Cref{defR}.
Therefore, $s_{k+1}\in\mathcal{B}_{R_{2}} \subset \mathcal{Y}$.

Now according to the monotone declination \Cref{Lyaux0}, we know that
\begin{equation}
\phi(y_{k+1})\leq  \phi(y_{k})\le \phi(y_{0}).
\end{equation}
Therefore by containment property \Cref{invarset},  ${y}_{k+1}\in\mathcal{B}_{R_{1}}$ is true.

We can also
show that $y_{k+1}$ of LONAG also satisfies
the sufficient declination of $\phi(y_{k})$ in
the corrected semi-implicit scheme \eqref{GCc}.
Let
\begin{equation*}
\widetilde{y}_{k}
= \overline{y}_{k}-\frac{1}{L}\nabla\phi(\overline{y}_{k})\in
\spa{y_{k},\overline{y}_{k},\nabla\phi(\overline{y}_{k})}.
\end{equation*}
First, by  \Cref{cvxmuL2} and $\nabla\phi(y_{*})=\zero$, we know
\begin{equation} \label{normGradYBar}
\norm{\nabla\phi(\overline{y}_{k})}
= \norm{\nabla\phi(\overline{y}_{k})-\nabla\phi(y_{*})}
\leq L\norm{\overline{y}_{k}-y_{*}}\leq LR_{1},
\end{equation}
where we used the fact that $\norm{\overline{y}_{k}-y_{*}}\leq R_{1}$ since $\overline{y}_{k} \in\mathcal{B}_{R_{1}}$.
By \Cref{normGradYBar}, we have
\begin{equation*}
\norm{\widetilde{y}_{k}-y_{*}}
\leq \norm{\overline{y}_{k}-y_{*}}+
\frac{1}{L}\norm{\nabla\phi(\overline{y}_{k})}
\leq 2R_{1}\leq R_{2},
\end{equation*}
which means
\begin{equation*}
\widetilde{y}_{k}
\in \mathcal{B}_{2R_{1}}
\subset \mathcal{B}_{R_{2}}\subset \mathcal{Y}.
\end{equation*}
Now using \eqref{cvxmuL1}, we have
\begin{equation} \label{suffY1}
\begin{aligned}
\phi(y_{k+1}) &\leq \phi(\widetilde{y}_{k})  \leq \phi(\overline{y}_{k})+ \dual{\nabla \phi(\overline{y}_{k}), \widetilde{y}_{k}-\overline{y}_{k}}
+\frac{L}{2}\norm{\widetilde{y}_{k}-\overline{y}_{k}}^{2}\\
&=\phi(\overline{y}_{k})-\frac{1}{L}\norm{\nabla\phi(\overline{y}_{k})}^{2}
+\frac{1}{2L}\norm{\nabla\phi(\overline{y}_{k})}^{2}\\
&=\phi(\overline{y}_{k})-\frac{1}{2L}\norm{\nabla\phi(\overline{y}_{k})}^{2},
\end{aligned}
\end{equation}
where the first inequality comes from the locally optimal step \eqref{LOc}
and the second inequality comes from the first-order
characterization \Cref{cvxmuL1}.

Thus we have proved that
$
\overline{y}_k$, $y_{k+1} \in \mathcal{B}_{R_{1}} \subset \mathcal{Y}, $ $
s_{k+1} \in \mathcal{B}_{R_{2}}\subset\mathcal{Y},
$
and
$y_{k+1}$ of LONAG satisfies the sufficient declination of
$\phi(y_{1})$ in \eqref{GCc}.

	Now we are ready to use
the convergence of the corrected semi-implicit scheme
in \Cref{GCprop} to obtain
\begin{equation}
	\label{eq:Lkdecreasing} 
\begin{aligned}
\Ly_{k+1}  = \phi(y_{k+1})-\phi(y_{*})+\frac{\mu}{2}\norm{s_{k+1}-y_{*}}^{2}   \leq (1-\tau ) \Ly_{k}\leq \dotsb \leq (1-\tau )^{k+1}\Ly_{0}.
\end{aligned}
\end{equation}
Note that
$\phi(y_{k+1})-\phi(y_{*})\geq 0$ always holds since $y_{*}$ is the minimizer of $\phi$, therefore, we have
\[
\frac{\mu}{2}\norm{s_{k+1}-y_{*}}^{2}
\leq \phi(y_{k+1})-\phi(y_{*})+\frac{\mu}{2}\norm{s_{k+1}-y_{*}}^{2} =\Ly_{k+1}
\leq \Ly_{0}
\]
which implies that $s_{k+1}\in\mathcal{B}_{R_{1}}.$ 
This completes the proof. 
\end{proof}

\begin{remark}
	Let us point out the major difference between \Cref{GCprop,thmInvarset}. 
	In \Cref{GCprop}, the iterates $(\overline{y}_k, y_{k+1}, s_{k+1})$ 
	are assumed to locate in $\mathcal{Y}$, which is the domain where 
	$\phi$ satisfies the first order characterization \Cref{cvxmuL1}. 
	Such an assumption is very general in convex optimization, 
	since the objective function is globally convex.
	However, for the auxiliary problem \Cref{auxprob}, the objective function is locally convex with respect to the choice of 
	$q$, an approximate eigenvector of $u_1$. There is no prior assumption about the locations of iterates $(\overline{y}_k, y_{k+1}, s_{k+1})$. 
	In this scenario, we need to prove the containment 
	like \Cref{invarset}, which was inspired by 
	the work of Park, Salgado and Wise \cite{Park2021} 
	on preconditioned Nesterov accelerated gradient method for solving semilinear PDEs.
\end{remark}

 By the monotonically declination in \Cref{Lyaux0} and convergence in \Cref{thmInvarset}, we have the following results on the convergence of the LONAG scheme.

\begin{corollary} \label{Lyaux}
With the assumptions of \Cref{thmInvarset},
the sequence $\{y_k\}$ generated by the LONAG~\eqref{LO} satisfies that
\begin{equation} \label{Lyaux1}  
\phi(y_{k})\leq \phi(y_{k-1})\leq \dotsb \leq \phi(y_{0})
\end{equation} 
and 
\begin{equation} \label{Lyaux2}  
\phi(y_{k})-\phi(y_{*})\leq (1-\tau )^{k}\Ly_{0},
\end{equation}
where $0<\tau \leq \kappa^{-1/2}$,
$y_{*}$ is the minimizer of \Cref{cvxopt} and $\Ly_{0}=\phi(y_{0})-\phi(y_{*})+\frac{\mu}{2}\norm{s_{0}-y_{*}}^{2}$. 
\end{corollary}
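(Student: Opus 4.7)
The plan is to derive this corollary as a direct consequence of the two results already established in this section: \Cref{Lyaux0}, which gives the monotone decrease of the sequence $\{\phi(y_k)\}$ from the locally optimal update \eqref{LOc}, and \Cref{thmInvarset}, which establishes both the containment of all iterates in the appropriate balls and the one-step contraction of the discrete Lyapunov function $\Ly_k$. Since \Cref{thmInvarset} already does all the hard work of verifying the hypotheses of \Cref{GCprop} recursively, the corollary reduces to two short applications.

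First, for the monotonicity chain \eqref{Lyaux1}, I would simply apply \Cref{Lyaux0} inductively: at each step, $y_{k+1}$ is the minimizer of $\phi$ over a subspace containing $y_k$, so $\phi(y_{k+1})\leq \phi(y_k)$, and cascading this from $k$ down to $0$ yields the desired chain. Notably, this step does not even use the step--size restriction or any property of $s_k$.

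Second, for the rate bound \eqref{Lyaux2}, I would iterate the contraction $\Ly_{k+1}\leq (1-\tau)\Ly_k$ established in \Cref{thmInvarset} (see \eqref{eq:Lkdecreasing}) to get $\Ly_k\leq (1-\tau)^k\Ly_0$. Then, using the non-negativity of the momentum term $\tfrac{\mu}{2}\norm{s_k-y_*}^2\geq 0$ in the definition \eqref{Ly} of $\Ly_k$, we obtain
\begin{equation*}
\phi(y_k)-\phi(y_*)\leq \phi(y_k)-\phi(y_*)+\frac{\mu}{2}\norm{s_k-y_*}^2 = \Ly_k \leq (1-\tau)^k \Ly_0,
\end{equation*}
which is precisely \eqref{Lyaux2}.

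There is no substantive obstacle here because the delicate arguments, namely establishing that the iterates $(\overline{y}_k,y_{k+1},s_{k+1})$ remain in $\mathcal{B}_{R_1}\cup\mathcal{B}_{R_2}\subset\mathcal{Y}$ and that $y_{k+1}$ satisfies the sufficient descent condition \eqref{GCc} even though it is produced by the locally optimal step rather than by an explicit gradient step, have already been carried out in the proof of \Cref{thmInvarset}. The corollary is therefore essentially a restatement in a form convenient for later use, isolating the monotonicity of the objective values and the geometric rate of convergence of $\phi(y_k)\to \phi(y_*)$ with the accelerated contraction factor $1-\tau=1-\kappa^{-1/2}$.
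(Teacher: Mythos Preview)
Your proposal is correct and matches the paper's approach exactly: the paper does not even give a separate proof for this corollary, simply stating that it follows ``by the monotonically declination in \Cref{Lyaux0} and convergence in \Cref{thmInvarset},'' which is precisely the two-step argument you outline.
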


It is clear that the LONAG, similar to the convergence rate of the corrected semi-implicit scheme in \Cref{GCprop}, can also achieve the acceleration by improving the rate of convergence to $1-\kappa^{-1/2}$.

%--------------------------------------

%\newpage
\section{EIC: a symmetric Eigensolver based on Implicit Convexity}
\label{sec:eic}

In this section, we will propose an algorithm for solving the original eigenvalue problem~\eqref{oriprob} 
by transforming the LONAG \eqref{LO} for the auxiliary function \Cref{auxprob} on $\mathcal{Y}$ into  $\mathcal{X}$. The new algorithm is called Eigensolver based on Implicit Convexity, EIC for short. In addition, we will discuss the convergence of the algorithm and the need of preconditioning. 

\subsection{EIC}
\label{sec:eiconX}

Let us return to the auxiliary problem \Cref{auxprob}
\begin{equation*} \tag{\ref{auxprob}}
	\min_{y\in\mathcal{Y}}\, \phi(y)
	\defi \frac{ y^{\Ttran}{B}y+2y^{\Ttran}b+\rho_{q}}{\norm{y}^{2}+1}.
\end{equation*}
As shown in \Cref{thmequiv}, the auxiliary problem \Cref{auxprob} is a
locally convex optimization problem, and we can apply the LONAG~\eqref{LO} 
for solving \eqref{auxprob}. 

\paragraph{LONAG for the auxiliary problem on $\mathcal{Y}$.} 
With initial $(s_{0},y_{0})\in (\mathcal{Y},\mathcal{Y})$, 
the LONAG scheme~\eqref{LO} generates the iterates 
$(\overline{y}_k, s_{k+1}, y_{k+1})$ by the following recursions:
\begin{subnumcases}{\label{algoY}}
\overline{y}_{k}=\dfrac{y_{k}+\tau s_{k}}{1+\tau }, \label{algoY1} \\
s_{k+1}=(1-\tau )s_{k}+\tau \overline{y}_{k}-
\dfrac{\tau }{\mu} \nabla\phi(\overline{y}_{k}), \label{algoY3} \\
y_{k+1} =
\argmin\limits_{y\in \mathcal{Y}\cap\spa{y_{k},\overline{y}_{k},
\nabla\phi(\overline{y}_{k})}} \phi(y), \label{algoY4}
\end{subnumcases}
where the stepsize $\tau $ satisfies 
$0<\tau \leq \kappa^{-1/2}$, 
$\kappa = L/\mu$, $\mu$ and $L$ are convexity parameters of $\phi(y)$ 
defined in \Cref{convexP} with $P=I$.

\paragraph{EIC = LONAG for the auxiliary problem on $\mathcal{X}$.} 
In \eqref{algoY}, we solve
the auxiliary problem \Cref{auxprob} on
$\mathcal{Y}$, and assume that $\widetilde{Q} =  [q, Q]$ is
explicit available. This is not practical since the use of $Q$ is too expensive.
To circumvent $Q$, 
we propose a scheme by
transforming the computation on $\mathcal{Y}$ into $\mathcal{X}$ without using $Q$.
To do so, for $k \geq 0$, denote
\begin{equation} \label{eq:ytox}
\overline{x}_{k}=\psi^{\dagger}(\overline{y}_{k}),\
z_{k}=\psi^{\dagger}(s_{k}),\
x_{k}=\psi^{\dagger}(y_{k}), \
\end{equation}
%and 
%\begin{equation*}
%z_{0} = \(psi^{\dagger}(s_{0})\in\mathcal{X}
%\quad\text{and}\quad)
%x_{0} = \psi^{\dagger}(y_{0})\in\mathcal{X},
%\end{equation*}
where the operator $\psi^{\dagger}$ is defined as in \eqref{defpsi}.
Since $\psi^{\dagger}\colon \mathcal{Y}\mapsto\mathcal{X}$
(\Cref{lemPsi}), it is clear that
\begin{equation*}
z_{k}, x_{k}, \overline{x}_{k} \in \mathcal{X}
\end{equation*}

%\bigskip 
The following lemma shows that the explicit reference of $Q$ can be avoided after
applying $\psi^{\dagger}$ due to the equation 
$\mathcal{Y}=\psi(\mathcal{X})$ established in \Cref{lemPsi}.

\begin{lemma} \label{linearPsi}
For any $x\in\R^{n}$,
\begin{equation*}
\psi^{\dagger}(Q^{\Ttran}Mx)
%=\frac{(1-q^{\Ttran}Mx)q+x}{\bignorm{(1-q^{\Ttran}Mx)q+x}_{M}}
=\frac{q+(I-qq^{\Ttran}M)x}{\bignorm{q+(I-qq^{\Ttran}M)x}_{M}},
\end{equation*}
where $[q, Q]$ is an $M$-orthogonal matrix.
\end{lemma}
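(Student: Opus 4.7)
The plan is to verify the identity by direct substitution into the definition of $\psi^{\dagger}$ and then eliminate $Q$ using the $M$-orthogonality structure of $\widetilde{Q} = [q, Q]$. First I would recall from \Cref{defpsi} that $\psi^{\dagger}(y) = (Qy + q)/\norm{Qy+q}_{M}$, and apply this directly to the argument $y = Q^{\Ttran}Mx$, which yields
\begin{equation*}
\psi^{\dagger}(Q^{\Ttran}Mx) = \frac{QQ^{\Ttran}Mx + q}{\bignorm{QQ^{\Ttran}Mx + q}_{M}}.
\end{equation*}

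Next I would exploit the assumption that $\widetilde{Q} = [q, Q]$ is $M$-orthogonal. This means $\widetilde{Q}^{\Ttran}M\widetilde{Q} = I$, and combining with $\widetilde{Q}\widetilde{Q}^{\Ttran}M = I$ (already used in \Cref{eq:psiq} and in the proof of \Cref{lemPsi}) gives the resolution of identity
\begin{equation*}
qq^{\Ttran}M + QQ^{\Ttran}M = I,
\end{equation*}
so $QQ^{\Ttran}M = I - qq^{\Ttran}M$. Substituting this equality into both the numerator and the denominator of the expression above immediately produces the claimed formula
\begin{equation*}
\psi^{\dagger}(Q^{\Ttran}Mx) = \frac{q + (I-qq^{\Ttran}M)x}{\bignorm{q + (I-qq^{\Ttran}M)x}_{M}}.
\end{equation*}

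There is no real obstacle here: the lemma is purely a rewriting that removes explicit reference to the complementary basis $Q$, which is the computationally expensive object one wishes to avoid when turning LONAG on $\mathcal{Y}$ into an algorithm acting on $\mathcal{X}$. The only minor point worth noting is that the denominator is nonzero, but this follows either by observing that $q + (I-qq^{\Ttran}M)x$ has nonzero $q$-component (its $M$-inner product with $q$ equals $1$), or equivalently from the well-definedness of $\psi^{\dagger}$ already established in \Cref{lemPsi}.
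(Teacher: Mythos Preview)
Your proof is correct and follows exactly the same approach as the paper: apply the definition of $\psi^{\dagger}$ from \Cref{defpsi} and then use the $M$-orthogonality identity $qq^{\Ttran}M + QQ^{\Ttran}M = I$ to eliminate $Q$. The paper's proof is a one-line remark to this effect, and you have simply written out the substitution in full.
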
 
\begin{proof}
It is a direct result from the $M$-orthogonality of $[q,Q]$ 
and the definition of $\psi^{\dagger}$ in \Cref{defpsi}.
\end{proof}

Now let us reveal the expressions of 
$\overline{x}_{k}, z_{k+1},  x_{k+1}$ without explicit reference of $Q$.
First, for $\overline{x}_{k}$, 
by \Cref{linearPsi} and the definition of $\psi^{\dagger}$ in \Cref{defpsi},
\begin{equation}
	\label{updw} 
	\begin{aligned}
		\overline{x}_{k}
& =\psi^{\dagger}(\overline{y}_{k})  
=\psi^{\dagger}\Bigl(\frac{y_{k}+\tau s_{k}}{1+\tau }\Bigr)  
=\psi^{\dagger}\Bigl(\frac{\psi(x_{k})+\tau \psi(z_{k})}{1+\tau }\Bigr)  \\
& =\psi^{\dagger}\biggl(Q^{\Ttran}M\Bigl(\frac{x_{k}}{(1+\tau )(q^{\Ttran}Mx_{k})}+\frac{\tau z_{k}}{(1+\tau )(q^{\Ttran}Mz_{k})}\Bigr)\biggr) 
=\dfrac{1}{\eta_{1}}\biggl( \dfrac{x_{k}}{q^{\Ttran}Mx_{k}}+
\dfrac{\tau z_{k}}{q^{\Ttran}Mz_{k}} \biggr), 
	\end{aligned}
\end{equation}
where
$\eta_{1}$ is a scaling factor such that $\norm{\overline{x}_{k}}_{M} = 1$.

%\bigskip
Next, consider $z_{k+1}$.
According to the definition of $\psi$ in \Cref{defpsi} and
the gradient of $\phi$ in \Cref{gradlink},
and using \Cref{linearPsi} and $z_{k+1}=\psi^{\dagger}(s_{k+1})$, we have
\begin{equation}
	\label{updz}
	\begin{aligned}
		z_{k+1}
& = \psi^{\dagger}(s_{k+1})  
= \psi^{\dagger}\Bigl((1-\tau )s_{k}+\tau \overline{y}_{k}-(\tau /\mu)\nabla\phi(\overline{y}_{k})  \Bigr)  \\
& = \psi^{\dagger}\Bigl(
(1-\tau )\psi(z_{k})+ \tau \psi(\overline{x}_{k})
-\frac{\tau (q^{\Ttran}M\overline{x}_{k})}{\mu}Q^{\Ttran}r_{k} \Bigr) 
 \\
& = \psi^{\dagger}\biggl(
Q^{\Ttran}M\Bigl(\frac{(1-\tau )z_{k}}{q^{\Ttran}Mz_{k}}
+\frac{\tau \overline{x}_{k}}{q^{\Ttran}M\overline{x}_{k}}
-\frac{\tau (q^{\Ttran}M\overline{x}_{k})M^{-1}r_{k}}
{\mu}\Bigr) \biggr)  \\
& = \dfrac{1}{\eta_{2}} \biggl(
\dfrac{(1-\tau )z_{k}}{q^{\Ttran}Mz_{k}}+
\dfrac{\tau \overline{x}_{k}}{q^{\Ttran}M\overline{x}_{k}}-
\dfrac{\tau (q^{\Ttran}M\overline{x}_{k})(I-qq^{\Ttran}M)M^{-1}r_{k}}{\mu}
\biggr),  
	\end{aligned}
\end{equation}
where $r_{k}=\nabla\Rq(\overline{x}_{k})
=2\bigl(A\overline{x}_{k}-\Rq(\overline{x}_{k})M\overline{x}_{k}\bigr)$,
$\eta_{2}$ is a scaling factor such that $\norm{z_{k+1}}_{M} = 1$.

%\bigskip

Finally for $x_{k+1}$, 
consider the local optimization problem \eqref{algoY4}:
\begin{equation*}
	y_{k+1} = \argmin_{y\in \mathcal{V}_{\mathcal{Y}}}\phi(y),
\end{equation*}
where
$\mathcal{V}_{\mathcal{Y}}
=\mathcal{Y}\cap\spa{y_{k},\overline{y}_{k},\nabla\phi(\overline{y}_{k})}$.
Let
\begin{equation*}
	\mathcal{V}_{\mathcal{X}}
	=\mathcal{X}\cap\spa{q,x_{k},\overline{x}_{k},M^{-1}
	\nabla\Rq(\overline{x}_{k})}.
\end{equation*}
Then from the definition of $\psi$ in \Cref{defpsi} and
the gradient of $\phi$ in \Cref{gradlink}, we know
\begin{equation*}
	\psi(\mathcal{V}_{\mathcal{X}})\subset \mathcal{V}_{\mathcal{Y}}
	\quad\text{and}\quad
	\psi^{\dagger}(\mathcal{V}_{\mathcal{Y}})\subset \mathcal{V}_{\mathcal{X}},
\end{equation*}
which means
$\psi(\mathcal{V}_{\mathcal{X}})=\mathcal{V}_{\mathcal{Y}}.$
Now consider the following local optimization problem
on $\mathcal{X}$:
\begin{equation*}
	x_{*} = \argmin_{x\in \mathcal{V}_{\mathcal{X}}}\Rq(x)
\end{equation*}
By the minimization property of $x_{*}$ and $y_{k+1}$, and \Cref{eqRqPhi}, we have
\begin{equation*}
	\Rq(x_{*})\leq \Rq\bigl(\psi^{\dagger}(y_{k+1})\bigr)
	=\phi(y_{k+1})\leq \phi\bigl(\psi(x_{*})\bigr)=\Rq(x_{*}).
\end{equation*}
Due to the uniqueness of $x_{*}$ and $y_{k+1}$,
we obtain
\begin{equation} \label{updx}
	x_{k+1} = \psi^{\dagger}(y_{k+1}) = x_{*}
	= \argmin_{x\in \mathcal{V}_{\mathcal{X}}}\Rq(x).
\end{equation}

%\bigskip 
\paragraph{EIC.} 
Combining \Cref{updw,updz,updx}, we derive an equivalent
iteration of \eqref{algoY} with all computations are on $\mathcal{X}$.
The recursions \Cref{updw,updz,updx} with initial values 
$x_{0}, z_0\in\mathcal{X}$ are called  Eigensolver based on Implicit Convexity (EIC).

%-----------------------------
\subsection{Convergence of EIC}

\begin{theorem} \label{thmEIC}
Assume that 
\begin{itemize} 
\item the stepsize $0<\tau \leq \kappa^{-1/2}$,
where $\kappa=L/\mu$, $\mu=\mu_{P}$ and $L=L_{P}$ 
are defined in \Cref{convexP} with $P=I$,

\item the initial vector $x_{0} \in \mathcal{X}$ of
the EIC~\Cref{updw,updz,updx} is chosen such that
\begin{equation} \label{defrho0}
0 \leq \Rq(x_0) -\lambda_{1} \leq
\frac{1}{\max\bigl\{8\kappa,2\kappa(1+\tau \kappa)^{2}\bigr\}}
(\rho_{q} - \lambda_{1}),
\end{equation}
where $\rho_{q}=\Rq(q)$ satisfies the condition~\Cref{ICrhoq},
and $q$ is the vector in \Cref{eq:rhoq1} to define
the auxiliary problem \Cref{auxprob}.

\item the initial vector $z_0 = x_{0}$.\footnote{We enforce
$z_{0}=x_{0}$ and $s_{0}=y_{0}$ in LONAG to simplify the proof.}
\end{itemize} 
Then the Rayleigh quotient sequence of $x_{k}$
generated by EIC~\Cref{updw,updz,updx} satisfy
\begin{equation} \label{thmRq} 
\Rq(x_{k})\leq \Rq(x_{k-1})\leq \dotsb \leq \Rq(x_{0}), 
\end{equation} 
and 
\begin{equation} \label{thmEk}
\Rq(x_{k})-\lambda_{1}\leq  
2(1-\tau )^{k} \bigl(\Rq(x_{0})-\lambda_{1}\bigr).
\end{equation} 
\end{theorem}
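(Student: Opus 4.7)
\medskip
\noindent\textbf{Proof proposal.}
The plan is to transport the EIC recursions \eqref{updw}--\eqref{updx} back to the $\mathcal{Y}$-space through $\psi$, recognize them as precisely the LONAG iteration \eqref{algoY} applied to the auxiliary problem \Cref{auxprob}, and then invoke \Cref{Lyaux}. The whole theorem then reduces to verifying that the initial hypothesis \Cref{defrho0} on $\Rq(x_0)-\lambda_1$ is strong enough to imply the containment condition \Cref{invarset} required by \Cref{thmInvarset}.

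First I would set $y_k=\psi(x_k)$, $s_k=\psi(z_k)$, $\overline{y}_k=\psi(\overline{x}_k)$ and observe that the construction in \Cref{sec:eiconX} together with \Cref{lemPsi} forces $(\overline{y}_k,s_{k+1},y_{k+1})$ to satisfy \eqref{algoY1}--\eqref{algoY4}; in particular \Cref{eqRqPhi} gives $\Rq(x_k)=\phi(y_k)$, so \Cref{thmRq} is an immediate corollary of the monotonicity in \Cref{Lyaux0}. The enforced choice $z_0=x_0$ translates to $s_0=y_0=\psi(x_0)$, so the initial discrete Lyapunov function satisfies
\begin{equation*}
\Ly_0 = \phi(y_0)-\phi(y_*)+\tfrac{\mu}{2}\norm{s_0-y_*}^2
      \leq 2\bigl(\phi(y_0)-\phi(y_*)\bigr) = 2\bigl(\Rq(x_0)-\lambda_1\bigr),
\end{equation*}
where the inequality uses $s_0=y_0$ together with the strong convexity bound $\phi(y_0)-\phi(y_*)\geq\tfrac{\mu}{2}\norm{y_0-y_*}^2$ (from \Cref{cvxmuL1} and $\nabla\phi(y_*)=\zero$). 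With this $\Ly_0$ and $R_1=(2\Ly_0/\mu)^{1/2}$, the inclusion $s_0\in\mathcal{B}_{R_1}$ and the level-set inclusion $\{\phi\leq\phi(y_0)\}\subset\mathcal{B}_{R_1}$ in \Cref{invarset} both follow from the same strong convexity lower bound.

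The main obstacle, and the place where \Cref{defrho0} is actually used, is the outer containment $\mathcal{B}_{R_2}\subset\mathcal{Y}$. Here I would apply \Cref{revLip}: it suffices that
\begin{equation*}
R_2^2 \leq \frac{2(\rho_q-\lambda_1)}{L_P}\Big|_{P=I} = \frac{2(\rho_q-\lambda_1)}{L}.
\end{equation*}
Using $R_1^2\leq 4(\Rq(x_0)-\lambda_1)/\mu$ and $R_2=\max\{2R_1,(1+\tau\kappa)R_1\}$, a direct computation gives
\begin{equation*}
R_2^2 \leq \max\bigl\{4,(1+\tau\kappa)^2\bigr\}\cdot\frac{4(\Rq(x_0)-\lambda_1)}{\mu},
\end{equation*}
so the requirement $R_2^2\leq 2(\rho_q-\lambda_1)/L$ reduces exactly to
\begin{equation*}
\Rq(x_0)-\lambda_1\leq \frac{1}{\max\{8\kappa,2\kappa(1+\tau\kappa)^2\}}(\rho_q-\lambda_1),
\end{equation*}
which is \Cref{defrho0}. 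Note also that \Cref{ICrhoq} guarantees $\rho_q$ lies in the regime where \Cref{convexP} with $P=I$ provides valid constants $\mu=\mu_I$, $L=L_I$.

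Once all hypotheses of \Cref{thmInvarset} are in place, \Cref{Lyaux} gives $\phi(y_k)-\phi(y_*)\leq (1-\tau)^k\Ly_0$. Translating back via $\phi(y_k)=\Rq(x_k)$ and $\phi(y_*)=\lambda_1$, and using $\Ly_0\leq 2(\Rq(x_0)-\lambda_1)$, yields \Cref{thmEk}. I expect the bookkeeping of the two radii $R_1,R_2$ against the constants $\mu,L,\kappa$ to be the only subtle point; everything else is a transcription between $\mathcal{X}$ and $\mathcal{Y}$ through $\psi$ and $\psi^\dagger$.
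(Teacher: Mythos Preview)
Your proposal is correct and follows essentially the same route as the paper's proof: transport to $\mathcal{Y}$ via $\psi$, use the local optimal step for monotonicity, bound $\Ly_0\leq 2(\Rq(x_0)-\lambda_1)$ from $s_0=y_0$ and strong convexity, verify \Cref{invarset} by applying \Cref{revLip} to show $\mathcal{B}_{R_2}\subset\mathcal{Y}$ (where the arithmetic with $R_2^2$ reduces precisely to \Cref{defrho0}), and then read off \Cref{thmEk} from \Cref{Lyaux}. The only cosmetic difference is that the paper obtains \Cref{thmRq} directly from \Cref{updx} on $\mathcal{X}$ rather than passing through \Cref{Lyaux0} on $\mathcal{Y}$, which is of course equivalent.
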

\begin{proof}
The monotonicity of the Rayleigh quotient sequence $\Rq(x_{k})$
in \eqref{thmRq} is a direct consequence from the local optimization problem \Cref{updx}.

For the convergence of the Rayleigh quotient
sequence $\Rq(x_k)$ in \Cref{thmEk},
since EIC \Cref{updw,updz,updx} is
equivalent to applying the LONAG \eqref{algoY} for the auxiliary
problem \eqref{algoY}, the convergence of EIC can be concluded 
by verifying that the assumption \eqref{invarset} of \Cref{Lyaux}
is satisfied if the initial vector $x_0$ is chosen to satisfy \eqref{defrho0}.
Therefore, for the rest of the proof, we need to show that
\begin{enumerate}[(i)]
\item
if the initial vector $x_0$ of EIC \Cref{updw,updz,updx} is
chosen to satisfy \eqref{defrho0},
then the assumption \eqref{invarset} of \Cref{thmInvarset} holds, \ie
\begin{equation} \tag{\ref{invarset}}
\bigl\{y\mid \phi(y)\leq \phi(y_{0})\bigr\} \subset
\mathcal{B}_{R_{1}}\subset  \mathcal{B}_{R_{2}} \subset \mathcal{Y}
\end{equation}
where
$R_{1}=(2\Ly_{0}/\mu)^{1/2}$
and
$R_{2}=\max\{2R_{1},(1+\tau \kappa)R_{1}\}$.

\item From the declination \eqref{eq:Lkdecreasing}
of the discrete Lyapounov function $\mathcal{L}_{k}$ of \Cref{thmInvarset},
we show the convergence of the Rayleigh quotient sequence $\{\Rq(x_k)\}$
as in \eqref{thmEk}.
\end{enumerate}

For the item (i), by \Cref{eqRqPhi}, we know that
$\Rq(x_{0}) = \phi(y_{0})$. Therefore we need to show that if
\begin{equation} \label{defrho0Y}
\phi(y_{0})-\lambda_{1} \leq
\frac{1}{\max\bigl\{8\kappa,2\kappa(1+\tau \kappa)^{2}\bigr\}}
\bigl(\rho_{q}-\lambda_{1}\bigr),
\end{equation}
then the assumption \eqref{invarset} of \Cref{thmInvarset} holds.
Let us first show that
\begin{equation} \label{invarset1}
\bigl\{y\mid \phi(y)\leq \phi(y_{0})\bigr\} \subset \mathcal{B}_{R_{1}}.
\end{equation}
In fact, by \Cref{lemaux} and $\Rq(x_{0})\leq \rho_{q}$, we have
\begin{equation*}
\bigl\{y\mid \phi(y)\leq \phi(y_{0})\bigr\} =
\bigl\{y\mid \phi(y)\leq \Rq(x_0) \bigr\} \subset
\bigl\{y\mid \phi(y)\leq \rho_{q} \bigr\} = \mathcal{Y}.
\end{equation*}
Furthermore,  for any $y$ satisfying $\phi(y)\leq \phi(y_{0})$,
by the convexity of $\phi$ on $\mathcal{Y}$,
the first-order characterization \eqref{cvxmuL1} and
$\nabla \phi(y_{*})=\zero$, we have
\begin{equation} \label{normyystar}
\norm{y-y_{*}}^{2}\leq \frac{2}{\mu}\bigl(\phi(y)-\phi(y_{*})\bigr)
\leq \frac{2}{\mu}\bigl(\phi(y_{0})-\phi(y_{*})\bigr)
\leq \frac{2\Ly_{0}}{\mu},
\end{equation}
which means $y\in\mathcal{B}_{R_{1}}$. Therefore, 
\Cref{invarset1} is proved.

For the other two relationships, \ie
\begin{equation} \label{eq:R1inR2}
\mathcal{B}_{R_{1}}\subset  \mathcal{B}_{R_{2}}\quad \text{and}\quad \mathcal{B}_{R_{2}}\subset \mathcal{Y},
\end{equation}
the first one comes from $R_{1}\leq R_{2}$. For the second one, note that 
\begin{equation*}
	\Ly_{0} = \phi(y_{0})-\phi(y_{*})+\frac{\mu}{2}\norm{y_{0}-y_{*}}^{2}
	\leq 2\bigl(\phi(y_{0})-\phi(y_{*})\bigr)
	= 2(\Rq(x_{0})-\lambda_{1}),
\end{equation*}
we can obtain $\mathcal{B}_{R_{2}}\subset \mathcal{Y}$ by 
\begin{equation*}
R_{2}^{2} 
= \max\{8,2(1+\tau \kappa)^{2}\}\frac{\Ly_{0}}{\mu} 
\leq \max\{16,4(1+\tau \kappa)^{2}\}\frac{\Rq(x_{0})-\lambda_{1}}{\mu} 
\leq \frac{2(\rho_{q}-\lambda_{1})}{L}
\end{equation*}
and \Cref{revLip}.
Combining \Cref{invarset1,eq:R1inR2}, we conclude \Cref{invarset}.

%\bigskip

For item (ii).
Since we have proved that $\rho_{0}$ satisfies \Cref{defrho0},
the convergence of LONAG in \Cref{Lyaux2} hold, \ie
\begin{equation*}
	\phi(y_{k})-\lambda_1\leq (1-\tau )^{k} \Ly_{0},
\end{equation*}
Combining it with \Cref{eqRqPhi}, we have
\begin{equation*}
\Rq(x_{k})-\lambda_{1}
=\phi(y_{k})- \phi(y_{*}) 
\leq  2(1-\tau )^{k} (\Rq(x_{0})-\lambda_{1}),
\end{equation*}
which is the result \Cref{thmEk}.
\end{proof}

Combining the convergence analysis of EIC in \Cref{thmEIC} with the estimation for the condition number of the auxiliary function in \Cref{corCvxP}, neglecting the term with $\rho_{q}-\lambda_{1}$, the rate of convergence for EIC is 
\begin{equation}
	\label{rateEIC}
	\Rq(x_{k})-\lambda_{1}\leq 2\biggl(1-\Bigl(\frac{\lambda_{2}-\lambda_{1}}{\lambda_{n}-\lambda_{1}}\Bigr)^{1/2}\biggr)^{k}\bigl(\Rq(x_{0})-\lambda_{1}\bigr). 
\end{equation}
Compared with the convergence rate of the steepest descent method \cite[Thm~2.1]{Knyazev1991}:
\begin{equation*}
	\tan\Theta(x_{k},u_{1})\leq \Bigl(1-\frac{\lambda_{2}-\lambda_{1}}{\lambda_{n}-\lambda_{1}}\Bigr)^{k}\tan\Theta(x_{0},u_{1}),
\end{equation*}
EIC achieves the acceleration by improving the exponent of $\frac{\lambda_{2}-\lambda_{1}}{\lambda_{n}-\lambda_{1}}$ from $1$ to $1/2$.

On the other hand, the bound \Cref{rateEIC} is not satisfactory in practices. When the ratio of the spectral spread $(\lambda_{n}-\lambda_{1})$ and the spectral gap $(\lambda_{2}-\lambda_{1})$ is large, for example the ratio of discrete Laplacian operator $\Delta^{h}$ is $h^{-2}$, where $h$ is mesh size, the rate of convergence is also close to $1$, such as $1-h$ for $\Delta^{h}$, which leads to slow convergence of the EIC.

Meanwhile, we observe that in \Cref{corCvxP}, the condition number $\kappa_{P}$ will be improved to $\lambda_{2}/(\lambda_{2}-\lambda_{1})$ when the matrix $P$ is a good spectral approximation of $B$ such that the ratio $\iota_{\xi}$ defined in \eqref{convexP} is close to $1$, which leads to the fast convergence of EIC.
In the next section, we will plugin the preconditioning technique to EIC to improve the condition number $\kappa_{P}$ by using a proper chosen preconditioner $P$.  
The resulting algorithm is called Eigensolver based on Preconditioning and Implicit Convexity (EPIC).

%-----------------------------------------------------------
%\newpage

\section{EPIC}
\label{sec:epic}

\subsection{EPIC = Preconditioned EIC }
\paragraph{LONAG in $P$ inner--product on $\mathcal{Y}$.} 
Let us again start with the auxiliary problem \Cref{auxprob}
on $\mathcal{Y}$. Instead of the standard inner--product, we use the $P$ inner--product now. Given initials $s_0$ and $y_0$, then the LONAG scheme \eqref{algoY} in $P$ inner--product is as follows:
\begin{subnumcases}{\label{LOP}}
\overline{y}_{k}=\dfrac{y_{k}+\tau_{P} s_{k}}{1+\tau_{P}},
\label{LOPa} \\
s_{k+1}=(1-\tau_{P})s_{k}+\tau_{P}\overline{y}_{k}-
\dfrac{\tau_{P}}{\mu} P^{-1}\nabla\phi(\overline{y}_{k}) , \label{LOPb} \\
y_{k+1}=\argmin_{y\in \mathcal{Y}\cap\spa{y_{k},\overline{y}_{k},
P^{-1}\nabla\phi(\overline{y}_{k})}}\phi(y), \label{LOPc}
\end{subnumcases}
where $P$ is a symmetric positive definite matrix. 
In some literatures \cite{Park2021}, such a strategy is called preconditioning since the level sets of the objective $\phi$ look more circular when some good $P$ is applied. Throughout this section, we will also call $P$ as a preconditioner and the scheme \Cref{LOP} as a preconditioned LONAG.

\paragraph{Preconditioned LONAG on $\mathcal{X}$.} 
Like \Cref{sec:eiconX}, we would like to compute the
preconditioned LONAG flow \eqref{LOP} on $\mathcal{X}$.
To do so, for $k \geq 0$, let
\begin{equation*}
\overline{x}_{k}=\psi^{\dagger}(\overline{y}_{k}), \
z_{k}=\psi^{\dagger}(s_{k}),\
x_{k}=\psi^{\dagger}(y_{k}).
\end{equation*}

%\bigskip 
%\noindent \fbox{$\overline{x}_{k}$}
By \Cref{updw}, $\overline{x}_{k}$ can be updated as
\begin{equation} \label{updxP}
\overline{x}_{k}=\dfrac{1}{\eta_{1}}
\Bigl( \dfrac{x_{k}}{q^{\Ttran}Mx_{k}}+\dfrac{\tau_{P} z_{k}}{q^{\Ttran}Mz_{k}} \Bigr),
\end{equation}
where $\eta_{1}$ is a scaling factor such that
$\norm{\overline{x}_{k}}_{M}=1$.

%\bigskip
%\noindent \fbox{${z}_{k+1}$}
For $z_{k+1}$, similar to \Cref{updz}, we know
\begin{align} 
z_{k+1} & = \dfrac{1}{\eta_{2}} \Bigl(
\dfrac{(1-\tau_{P})z_{k}}{q^{\Ttran}Mz_{k}}+
\dfrac{\tau_{P} \overline{x}_{k}}{q^{\Ttran}M\overline{x}_{k}}-
\dfrac{\tau_{P}(q^{\Ttran}M\overline{x}_{k})(I-qq^{\Ttran}M)QP^{-1}Q^{\Ttran}r_{k}}{\mu} \Bigr) \nonumber \\
& = \dfrac{1}{\eta_{2}} \Bigl(
\dfrac{(1-\tau_{P})z_{k}}{q^{\Ttran}Mz_{k}}+
\dfrac{\tau_{P} \overline{x}_{k}}{q^{\Ttran}M\overline{x}_{k}}-
\dfrac{\tau_{P}(q^{\Ttran}M\overline{x}_{k})QP^{-1}Q^{\Ttran}r_{k}}{\mu} \Bigr), \label{updzP}
\end{align}
where
\begin{equation} \label{defrkP}
r_{k} = \nabla\Rq(\overline{x}_{k}) =  2 \bigl(A\overline{x}_{k}-\Rq(\overline{x}_{k})M\overline{x}_{k}\bigr),
\end{equation}
and $\eta_{2}$ is a scaling factor such that $\norm{z_{k+1}}_{M} = 1$.

Note that the computation for the vector $z_{k+1}$ of \Cref{updzP}
is unattainable due to the vector
$QP^{-1}Q^{\Ttran}r_{k}$ involving the matrix $Q$.
To circumvent $Q$, we introduce a symmetric positive
definite co--preconditioner $T$ of $P$, where $T\in\R^{n \times n}$,
and enforce the form of $P$ as
\begin{equation} \label{eq:PQprecond} 
P=Q^{\Ttran}TQ.	
\end{equation}

The following lemma shows that the vector $QP^{-1}Q^{\Ttran}r_{k}$ can be computed with explicit 
reference of $Q$.

\begin{lemma} \label{defPT}
Suppose $T$ is symmetric positive definite and $P=Q^{\Ttran}TQ$.
Then for any $z\in\R^{n}$,
\begin{equation} \label{eq:qpqz} 
QP^{-1}Q^{\Ttran}z = \proj T^{-1}z,
\end{equation}
where $\proj$ is a complementation of the oblique projector
$\widetilde{q}q^{\Ttran}M/(q^{\Ttran}M\widetilde{q})$ defined as: 
\begin{equation} \label{eq:compobproj} 
\proj = I-\frac{\widetilde{q}q^{\Ttran}M}{q^{\Ttran}M\widetilde{q}},
\end{equation}
and $\widetilde{q} = T^{-1}Mq$. 
\end{lemma}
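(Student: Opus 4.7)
The plan is to establish the identity by characterizing both sides as the unique solution of a two-condition system. Set $v = QP^{-1}Q^{\Ttran}z$; by construction $v = Qw$ with $w = P^{-1}Q^{\Ttran}z$, so $v$ satisfies (i) $v \in \mathrm{range}(Q)$, and (ii) the Galerkin-type condition $Q^{\Ttran}(Tv - z) = 0$, which is just $Pw = Q^{\Ttran}z$ rewritten. Conversely, any $v$ obeying (i) and (ii) must coincide with $QP^{-1}Q^{\Ttran}z$, by invertibility of $P$. Hence it suffices to verify (i) and (ii) for the candidate $\proj T^{-1}z$, or equivalently, to solve (i)--(ii) directly and recognize the answer as $\proj T^{-1}z$.

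Next, rewrite (i) and (ii) using $M$-orthogonality of $[q, Q]$. Since $q^{\Ttran}Mq = 1$ and $Q^{\Ttran}Mq = 0$, condition (i) is equivalent to $q^{\Ttran}Mv = 0$. For condition (ii), the kernel $\ker(Q^{\Ttran})$ (with respect to the standard inner product) is one-dimensional because $Q \in \R^{n \times (n-1)}$ has full column rank; the relation $Q^{\Ttran}Mq = 0$ produces the nonzero element $Mq$, so $\ker(Q^{\Ttran}) = \mathrm{span}(Mq)$. Therefore (ii) is equivalent to the existence of a scalar $\lambda$ with $Tv = z + \lambda Mq$.

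Solving this system is now mechanical. From $Tv = z + \lambda Mq$ we get $v = T^{-1}z + \lambda \widetilde{q}$, since $\widetilde{q} = T^{-1}Mq$. Substituting into $q^{\Ttran}Mv = 0$ gives $\lambda = -\,q^{\Ttran}MT^{-1}z/(q^{\Ttran}M\widetilde{q})$, which is well-defined because $q^{\Ttran}M\widetilde{q} = q^{\Ttran}MT^{-1}Mq > 0$ by positive definiteness of $T^{-1}$. Plugging $\lambda$ back yields
\begin{equation*}
v \;=\; T^{-1}z \;-\; \frac{\widetilde{q}\,q^{\Ttran}MT^{-1}z}{q^{\Ttran}M\widetilde{q}} \;=\; \Bigl(I - \frac{\widetilde{q}q^{\Ttran}M}{q^{\Ttran}M\widetilde{q}}\Bigr) T^{-1}z \;=\; \proj T^{-1}z,
\end{equation*}
which is the desired identity.

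The only nontrivial step is the identification $\ker(Q^{\Ttran}) = \mathrm{span}(Mq)$; it is the place where the $M$-orthogonality of the extended frame $[q,Q]$ is really exploited. Everything else is routine bookkeeping in the $[q,Q]$ basis, and no structural assumption on the co-preconditioner $T$ beyond symmetric positive definiteness is needed.
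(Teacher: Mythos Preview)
Your proof is correct and takes a genuinely different route from the paper's. The paper verifies the identity by testing both sides against $q^{\Ttran}M$ and $Q^{\Ttran}M$ separately: since $[q,Q]$ is $M$-orthogonal, agreement of $q^{\Ttran}M(\cdot)$ and $Q^{\Ttran}M(\cdot)$ on both sides forces equality. The first test is immediate (both sides are zero), and the second is checked after multiplying through by $P$ and using $QQ^{\Ttran}M = I - qq^{\Ttran}M$. Your argument instead \emph{derives} the formula: you characterize $QP^{-1}Q^{\Ttran}z$ as the unique vector satisfying the range constraint $q^{\Ttran}Mv=0$ and the Galerkin condition $Q^{\Ttran}(Tv-z)=0$, then solve this $2$-condition system explicitly. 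The key step in both approaches is the same $M$-orthogonality fact, but your version is more constructive and makes transparent why the oblique projector $\proj$ appears (it is exactly the correction needed to force $T^{-1}z$ into $\mathrm{range}(Q)$ along $\widetilde{q}$), whereas the paper's verification is slightly shorter but less illuminating about the structure.
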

\begin{proof}
Since $[q,Q]$ is an $M$-orthogonal matrix, it is sufficient to prove
\begin{align}
q^{\Ttran}M\bigl(QP^{-1}Q^{\Ttran}z\bigr) & = q^{\Ttran}M\proj T^{-1}z,\label{defPT1} \\
Q^{\Ttran}M\bigl(QP^{-1}Q^{\Ttran}z\bigr) & = Q^{\Ttran}M\proj T^{-1}z.\label{defPT2}
\end{align}

	For \Cref{defPT1}, the left side is zero due to the $M$-orthogonality of $[q,Q]$, and the right side is also zero due to $\proj^{\Ttran}Mq=\zero$.

	For \Cref{defPT2}, multiplying $P$ on both sides, it is sufficient to prove
	\begin{equation*}
		Q^{\Ttran}z = PQ^{\Ttran}M\Bigl(I-\frac{\widetilde{q}q^{\Ttran}M}{q^{\Ttran}M\widetilde{q}}\Bigr)T^{-1}z.
	\end{equation*}
	By $P=Q^{\Ttran}TQ$ and $\proj^{\Ttran}Mq=\zero$, we have
	\begin{equation*}
		\begin{aligned}
			PQ^{\Ttran}M\proj T^{-1}z
			 & =Q^{\Ttran}TQQ^{\Ttran}M\proj T^{-1}z
			=  Q^{\Ttran}T(I-qq^{\Ttran}M)\proj T^{-1}z                                                                 \\
			 & =Q^{\Ttran}z-\frac{z^{\Ttran}\widetilde{q}}{q^{\Ttran}M\widetilde{q}} Q^{\Ttran}TT^{-1}Mq = Q^{\Ttran}z,
		\end{aligned}
	\end{equation*}
	which means \Cref{defPT2} holds. Then the lemma is proved by \Cref{defPT1,defPT2}.
\end{proof}

By \Cref{defPT}, the updating formula \Cref{updzP} can be rewritten as
\begin{equation} \label{eq:EPICz}
z_{k+1} = \frac{1}{\eta_{2}}\Bigl(
	\dfrac{(1-\tau_{P})z_{k}}{q^{\Ttran}Mz_{k}}+
	\dfrac{\tau_{P} \overline{x}_{k}}{q^{\Ttran}M\overline{x}_{k}}-
	\dfrac{\tau_{P}(q^{\Ttran}M\overline{x}_{k})\widetilde{r}_{k}}{\mu}\Bigr),
\end{equation}
where
\begin{equation*}
\widetilde{r}_{k} = QP^{-1}Q^{\Ttran}r_{k} = \proj T^{-1}r_{k}, 
\end{equation*}
$\proj$ is defined in \eqref{eq:compobproj},  
and $\eta_{2}$ is a scaling factor such that $\norm{z_{k+1}}_{M}=1$.

%\bigskip
%\noindent \fbox{${x}_{k+1}$}
Finally, for the vector $x_{k+1}$, 
let us consider the local optimization problem
\begin{equation*}
	y_{k+1} = \argmin_{y\in\mathcal{V}_{\mathcal{Y}}}\phi(y),
\end{equation*}
where
\begin{equation*}
\mathcal{V}_{\mathcal{Y}}
=\mathcal{Y}\cap\spa{y_{k},\overline{y}_{k},P^{-1}\nabla\phi(\overline{y}_{k})}.
\end{equation*}
According \Cref{gradlink,defrkP},
\begin{equation*}
	P^{-1}\nabla\phi(\overline{y}_{k})
	= \frac{P^{-1}Q^{\Ttran}\nabla\Rq(\overline{x}_{k})}{\sqrt{1+\norm{\overline{y}_{k}}^{2}}} = \frac{P^{-1}Q^{\Ttran}r_{k}}{\sqrt{1+\norm{\overline{y}_{k}}^{2}}}.
\end{equation*}
Combining this equation with \Cref{defPT,defpsi}, we have
\begin{equation*}
	\mathrm{span}\bigl\{q,\psi^{\dagger}\bigl(P^{-1}\nabla\phi(\overline{y}_{k})\bigr)\bigr\} = \spa{q,QP^{-1}\nabla\phi(\overline{y}_{k})} = \spa{q,\widetilde{r}_{k}}.
\end{equation*}
With same arguments of \Cref{updx}, let
\begin{equation*}
	\mathcal{V}_{\mathcal{X}}
	= \mathcal{X}\cap\spa{q,x_{k},\overline{x}_{k},\widetilde{r}_{k}}.
\end{equation*}
We know $\psi(\mathcal{V}_{\mathcal{X}})=\mathcal{V}_{\mathcal{Y}}$ and
the expression of $x_{k+1}$ is
\begin{equation} \label{eq:EPICx}
x_{k+1}=\psi^{\dagger}(y_{k+1})
= \argmin_{x\in \mathcal{V}_{\mathcal{X}}} \Rq(x).
\end{equation}

\subsection{EPIC pseudocode}
Combining \Cref{updxP,updzP,eq:EPICx}, we have a preconditioned LONAG on $\mathcal{X}$ as \Cref{algoP}, which is called Eigensolver based on Preconditioning and Implicit Convexity, EPIC in short.

\medskip

\begin{algorithm2e}[H]  %[htbp]
	\caption{EPIC} \label{algoP}
	\KwIn{Matrices $A, M$, a vector $q$, a preconditioner $T$, the initial vector $x_{0}$, and parameters $0<\mu\leq L$.}

	Compute $T\widetilde{q} = Mq$ for $\widetilde{q}$ and $\tau_{P}=\sqrt{\mu/L}$\;

	Set $k=0$, $z_{0}=x_{0}$ and $\alpha_{0}=\gamma_{0}=q^{\Ttran}Mx_{0}$\;

	\Repeat{Convergence}{

	Compute $\overline{x}_{k}=\dfrac{x_{k}}{\alpha_{k}}+\dfrac{\tau_{P} z_{k}}{\gamma_{k}}$\;

	Normalize $\overline{x}_{k}$ by $\overline{x}_{k}=\overline{x}_{k}/\norm{\overline{x}_{k}}_{M}$\;

	Compute $\beta_{k}=q^{\Ttran}M\overline{x}_{k}$, $\rho_{k}=\Rq(\overline{x}_{k})$ and $r_{k}=2(A\overline{x}_{k}-\rho_{k}M\overline{x}_{k})$\;

	Compute $\widetilde{r}_{k} = \proj T^{-1}r_{k}$, where
        $\proj = I-\frac{\widetilde{q}q^{\Ttran}M}{q^{\Ttran}M\widetilde{q}}$\;

	Compute $z_{k+1}=\dfrac{(1-\tau_{P})z_{k}}{\gamma_{k}}+\dfrac{\tau_{P} \overline{x}_{k}}{\beta_{k}}-\dfrac{\tau_{P}\beta_{k}\widetilde{r}_{k}}{\mu}$\;

	Normalize $z_{k+1}$ by $z_{k+1}=z_{k+1}/\norm{z_{k+1}}_{M}$\;

	Compute $\gamma_{k+1} = q^{\Ttran}Mz_{k+1}$\;

	Solve a local optimization problem $x_{k+1} = \argmin\limits_{x\in \mathcal{X}\cap\spa{q,x_{k},\overline{x}_{k},\widetilde{r}_{k}}}\Rq(x)$\;

	Compute $\alpha_{k+1}=q^{\Ttran}Mx_{k+1}$\;

	Set $k=k+1$\;

	}  % end of repeat
\end{algorithm2e}

%\bigskip 

\noindent 

\begin{remark}
	According to Stewart's analysis of oblique projectors
in \cite{Stewart2011}, the cancellation may happen during computing
the complementation $\proj$. A remedy is to repeat the process,
which is called recomplementation.
\end{remark}

%For the input parameters, including $\mu$, $L$, $q$ and $x_{0}$, we can estimate them by solving the eigenvalue problem approximately, such as in low precision or on coarse meshes.

\paragraph{Complexity, EPIC vs LOPCG.} 
In each iteration of the EPIC, one matrix-vector multiplication of $A$ for computing the residual vector $r_{k}=2(A\overline{x}_{k}-\rho_{k}M\overline{x}_{k})$, 
one preconditioned linear system $T^{-1}r_{k}$, and one 
Rayleigh--Ritz procedure are needed. The difference is that LOPCG compute the Rayleigh--Ritz procedure in a three--dimensional subspace while EPIC in a four--dimensional subspace.

If taking the matrix-vector multiplication of $M$ into account, 
since $Mq$ can be computed in advance, we only need to compute 
two $M$-orthogonalization, \ie $\overline{x}_{k}$ and $z_{k+1}$, 
and one $M$ matrix-vector multiplication for residual vector $r_{k}$, where LOPCG only needs one matrix-vector multiplication. Since the major cost comes from the preconditioned linear systems and matrix-vector multiplications of $A$,  the cost of EPIC and LOPCG are the essentially same.

\subsection{Convergence analysis of EPIC}

Like \Cref{thmEIC}, we can establish the convergence of EPIC by applying the preconditioned LONAG for the auxiliary problem.
\begin{theorem} \label{thmEPIC}
Assume that
\begin{itemize} 
\item the step--size $\tau_{P}$ satisfies 
\begin{equation} \label{defAlphaP} 
0<\tau_{P}\leq \kappa_{P}^{-1/2}, 
\end{equation}
where 
$\kappa_{P}=L_{P}/\mu_{P}$, $\mu_{P}$ and $L_{P}$ are defined in 
\Cref{convexP},

\item the initial vector $x_{0} \in \mathcal{X}$ of EPIC is chosen such that
\begin{equation*} % \label{defrho0}
0 \leq \Rq(x_0) -\lambda_{1} \leq
\frac{1}{\max\bigl\{8\kappa_P,2\kappa_P(1+\tau_{P}\kappa_P)^{2}\bigr\}}
(\rho_{q} - \lambda_{1}),
\end{equation*}
where $\rho_{q}=\Rq(q)$ satisfies the condition~\Cref{ICrhoq},
and $q$ is the vector in \Cref{eq:rhoq1} to define
the auxiliary problem \Cref{auxprob}.
\item $z_{0}=x_{0}$ in EPIC (\Cref{algoP}).  
\end{itemize} 
Then the Rayleigh quotient sequence of $x_{k}$
generated by EPIC (\cref{algoP}) satisfy
\begin{equation}
\Rq(x_{k})\leq \Rq(x_{k-1})\leq \dotsb \leq \Rq(x_{0}), \label{eq:Rqprecond}
\end{equation} 
and
\begin{equation} \label{eq:rateprecond}
\Rq(x_{k})-\lambda_{1}\leq  2(1-\tau_{P})^{k} \bigl(\Rq(x_{0})-\lambda_{1}\bigr). 
\end{equation}
\end{theorem}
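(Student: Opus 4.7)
The plan is to reduce the theorem to \Cref{thmInvarset} and \Cref{Lyaux} applied to the preconditioned LONAG~\eqref{LOP} on $\mathcal{Y}$, using the fact that by construction (via \Cref{updxP,eq:EPICz,eq:EPICx} and \Cref{defPT}) EPIC is exactly \eqref{LOP} transported onto $\mathcal{X}$ by the bijection $\psi^{\dagger}$, with $\mu = \mu_P$, $L = L_P$, and all norms understood in the $P$-inner-product. Concretely, the iterates of EPIC satisfy $y_k=\psi(x_k)$, $s_k=\psi(z_k)$, $\overline{y}_k=\psi(\overline{x}_k)$, and by \Cref{eqRqPhi} the Rayleigh quotient along $x_k$ coincides with $\phi$ along $y_k$. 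This reduction is the skeleton of the proof; almost everything else is bookkeeping analogous to \Cref{thmEIC}.

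First, for the monotonicity \eqref{eq:Rqprecond} I would invoke \Cref{Lyaux0}: since $x_k\in\spa{q,x_k,\overline{x}_k,\widetilde r_k}$, the locally optimal Ritz step gives $\Rq(x_{k+1})\le \Rq(x_k)$ directly, without any reference to convergence rates. Second, to obtain \eqref{eq:rateprecond} I would verify the containment hypothesis \eqref{invarset} of \Cref{thmInvarset} in the $P$-norm, i.e.
\begin{equation*}
\{y\mid \phi(y)\le \phi(y_0)\}\subset \mathcal{B}_{R_1}\subset \mathcal{B}_{R_2}\subset \mathcal{Y},
\end{equation*}
with $\mathcal{B}_R=\{y\mid \norm{y-y_*}_P\le R\}$, $R_1=(2\Ly_0/\mu_P)^{1/2}$, $R_2=\max\{2R_1,(1+\tau_P\kappa_P)R_1\}$, and $\Ly_0=\phi(y_0)-\phi(y_*)+\tfrac{\mu_P}{2}\norm{s_0-y_*}_P^2$.

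The first inclusion $\{y\mid \phi(y)\le \phi(y_0)\}\subset\mathcal{B}_{R_1}$ mirrors \eqref{normyystar}: strong $P$-convexity of $\phi$ from \Cref{convexP} combined with $\nabla\phi(y_*)=\zero$ yields $\tfrac{\mu_P}{2}\norm{y-y_*}_P^2\le \phi(y)-\phi(y_*)\le \Ly_0$. For $\mathcal{B}_{R_2}\subset\mathcal{Y}$ I would apply \Cref{revLip}: it suffices to show $R_2^2\le 2(\rho_q-\lambda_1)/L_P$. The initial condition on $\Rq(x_0)$ together with $z_0=x_0$ (so $s_0=y_0$ and thus $\Ly_0\le 2(\Rq(x_0)-\lambda_1)=2(\phi(y_0)-\phi(y_*))$) gives
\begin{equation*}
R_2^2=\max\{8,2(1+\tau_P\kappa_P)^2\}\frac{\Ly_0}{\mu_P}\le \max\{16,4(1+\tau_P\kappa_P)^2\}\frac{\Rq(x_0)-\lambda_1}{\mu_P}\le \frac{2(\rho_q-\lambda_1)}{L_P},
\end{equation*}
where the last inequality uses the assumed bound on $\Rq(x_0)-\lambda_1$ and $\kappa_P=L_P/\mu_P$. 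Note that \Cref{revLip} is applicable because $\rho_q$ satisfies \Cref{ICrhoq}, hence \Cref{convexP} supplies $\mu_P,L_P>0$.

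With the containment verified, \Cref{thmInvarset} applies and gives $\Ly_{k+1}\le (1-\tau_P)\Ly_k$ recursively, so $\Ly_k\le (1-\tau_P)^k\Ly_0$. Using $\phi(y_k)-\phi(y_*)\le \Ly_k$, the identity $\Rq(x_k)=\phi(y_k)$ from \Cref{eqRqPhi}, and $\Ly_0\le 2(\Rq(x_0)-\lambda_1)$, we obtain \eqref{eq:rateprecond}. The only substantive work is the arithmetic check that the prefactor $\max\{8\kappa_P,2\kappa_P(1+\tau_P\kappa_P)^2\}$ is exactly what is needed to absorb both the factor $\max\{16,4(1+\tau_P\kappa_P)^2\}$ and the ratio $L_P/\mu_P=\kappa_P$; beyond that, the main obstacle is really notational --- every ball, norm, and convexity constant must be consistently interpreted in the $P$-inner product, and one must be careful that the equivalence EPIC $\leftrightarrow$ preconditioned LONAG required the specific form $P=Q^{\Ttran}TQ$ used in \Cref{defPT} to eliminate $Q$.
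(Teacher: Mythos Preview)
Your proposal is correct and follows essentially the same route as the paper: the paper's proof consists of the single sentence that \Cref{thmEPIC} is proved exactly as \Cref{thmEIC} with the standard inner product replaced by the $P$-inner product, and your write-up is precisely that replacement carried out in detail --- same reduction of EPIC to the preconditioned LONAG \eqref{LOP} via $\psi,\psi^{\dagger}$, same verification of the containment \eqref{invarset} using strong $P$-convexity and \Cref{revLip}, and same arithmetic $R_2^2\le 2(\rho_q-\lambda_1)/L_P$ from the hypothesis on $\Rq(x_0)$ and $z_0=x_0$.
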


The proof of \Cref{thmEPIC} is analogous to the proof of \Cref{thmEIC}. The only difference is replacing the standard inner--product by $P$ inner--product. 
Similar to the discussion for the convergence of EIC in \Cref{rateEIC}, neglecting the term with $\rho_{q}-\lambda_{1}$, the rate of convergence for EPIC is 
\begin{equation}
	\label{rateEPIC}
	\Rq(x_{k})-\lambda_{1}\leq 2(1-\sqrt{\eta_{\xi}})^{k}\bigl(\Rq(x_{0})-\lambda_{1}\bigr), 
\end{equation}
where 
\begin{equation*}
    \eta_{\xi} = \frac{1-\lambda_{1}/\lambda_{n}}{\iota_{\xi}(1-\lambda_{1}/\lambda_{2})} 
\quad \text{and} \quad 
\iota_{\xi} = \frac{\xi_{\max}}{\xi_{\min}}
= \frac{\lambda_{\max}(B,P)}{\lambda_{\min}(B,P)}.    
\end{equation*}

Clearly, the bound \eqref{rateEPIC} is better than the following sharp estimation for the preconditioned inverse iteration in \cite{Argentati2017}\footnote{The result in \cite{Argentati2017} is slightly different. In their result, there is no $\lambda_{n}$ term in $\eta_{\xi}$.}
\begin{equation*}
	\frac{\Rq(x_{k+1})-\lambda_{1}}{\lambda_{2}-\Rq(x_{k+1})}\leq (1-\eta_{\xi})^{2}\frac{\Rq(x_{k})-\lambda_{1}}{\lambda_{2}-\Rq(x_{k})},
\end{equation*}
since the exponent of $\eta_{\xi}$ is $1/2$ rather than $1$. For LOPCG, Knyazev gave the following expected rate of convergence in \cite{Knyazev2001}:
\begin{equation}
	\label{rateLOPCG}
	\frac{\Rq(x_{k+1})-\lambda_{1}}{\lambda_{2}-\Rq(x_{k+1})}\leq \Bigl(1-\frac{2\sqrt{\eta_{\xi}}}{1+\sqrt{\eta_{\xi}}}\Bigr)^{2}\frac{\Rq(x_{k})-\lambda_{1}}{\lambda_{2}-\Rq(x_{k})}.
\end{equation}

To the best of our knowledge, a complete proof of upper bound~\eqref{rateLOPCG} is elusive so far. Recently, a provable accelerated eigensolver with preconditioning named Riemannian Acceleration with Preconditioning (RAP) is proposed in \cite{Shao2023b}. The RAP achieves an acceleration similar to \Cref{rateEPIC}, but the analysis is different. For RAP, the geodesical convexity is well--studied, but the preconditioning is complicate since the operations are on manifold. Some extra terms, besides $\eta_{\xi}$, about the preconditioner $T$ are involved for the theoretical gaurantee of acceleration. For EPIC, due to the subtle structure of the implicit convexity and transformation between eigenvalue problem and auxiliary problem, the preconditioning is very natural. For the convergence rate, up to the first order of $\rho_{q}-\lambda_{1}$, we only need the traditional term $\eta_{\xi}$. 

To end this section, let us discuss how to quantify the quality of preconditioner $P$, which is equivalent co--preconditioner $T$. 
First, from the practical viewpoint, the linear system $Tx=b$ 
should be easy to solve. From the theoretical viewpoint, based on the rate of convergence for EPIC in \Cref{rateEPIC}, the ratio $\iota_{\xi}$ should be close to $1$. Since 
	\begin{equation}
		\label{compareIota}
		\iota_{\xi} := \frac{\xi_{\max}}{\xi_{\min}} 
= \frac{\lambda_{\max}(B,P)}{\lambda_{\min}(B,P)} 
\leq \frac{\nu_{\max}}{\nu_{\min}} 
= \frac{\lambda_{\max}(A,T)}{\lambda_{\min}(A,T)} := \iota_{\nu},
	\end{equation}
	we can select the co--preconditioner $T$ as a good spectral approximation of $A$, \ie $\iota_{\nu}$ is close to $1$.
Finally, as a by-product, a good preconditioner $P$ (therefore, the co--preconditioner $T$) enlarges the permissible region for 
the choice of initial vector $q$, since the requirement of $\rho_{q}$ in
\Cref{eq:positivemuP} is 
\begin{equation*}
\lambda_{1}\leq \rho_{q} \leq
\lambda_{1}+\frac{\lambda_{2}-\lambda_{1}}{2+\chi_{P}},
\end{equation*}
where
\[ 
\chi_{P}=\frac{8\lambda_{2}}{\lambda_{1}} 
\cdot \iota_{\xi} \cdot
\Bigl(\frac{\lambda_{2}+\lambda_{1}}{2(\lambda_{2}-\lambda_{1})}\Bigr)^{1/2}.
\] 
According to \Cref{compareIota}, we know $\iota_{\xi}\leq \iota_{\nu}$. 
Thus, when $T$ is a good preconditioner for $A$, 
\ie $\iota_{\nu}$ is close to $1$, 
the parameter $\chi_{P}$ will be significantly contracted,
and the permissible region for $\rho_{q}$ is enlarged.

%\newpage
\section{Numerical experiments}
\label{sec:numerics} 

In this section, numerical results are presented to
support our theoretical analysis above.
In the first example, we will look into the sharpness of the exponent $-1/2$ in $\kappa_{P}^{-1/2}$ from \Cref{thmEPIC}.
In the second example, we select some matrices and matrix pencils
from SuiteSparse Matrix Collection to compare the performance and behavior of
EPIC with LOPCG, a popular preconditioned eigensolver with momentum.

\subsection{Tests for sharpness of exponent}

Following the setting in \cite[Sec~6]{Knyazev2001}, let
\begin{equation*}
A = \text{Diag}(\lambda_{1}, \lambda_{2}, \dotsc, \lambda_{n})
\quad \text{and}\quad
M=I,
\end{equation*}
where $\lambda_{i} = \omega^{i-1}$ for some $\omega>1$. Then the ratio
of spectral spread and spectral gap is
\begin{equation*}
\frac{\lambda_{n}-\lambda_{1}}{\lambda_{2}-\lambda_{1}}
= \frac{\omega^{n-1}-1}{\omega-1}\geq \omega^{n-2}.
\end{equation*}
When $\omega>1$, the ratio will grow exponentially
and the eigenvalue problem is ill-conditioned.

The co--preconditioner $T$ is constructed as
\begin{equation*}
T = A^{1/2}S^{-1}D^{-1}SA^{1/2},
\end{equation*}
where $S$ and $S^{-1}$ are the discrete sine transformation matrix 
and its inverse, which can be implemented by Matlab built-in 
function $\texttt{dst}$ and $\texttt{idst}$ respectively, and
\begin{equation*}
D = \mathtt{Diag\bigl(logspace(0,log10(\iota_{\nu}),n)\bigr)},
\end{equation*}
where $\iota_{\nu}>1$ is a parameter. 
The preconditioner $P$ is given by $P=Q^{\Ttran}TQ$, where $[q,Q]$ is an $M$--orthogonal matrix, and $q$ is an approximation of the eigenvector $u_{1}$ to be determined later.

According to the Courant-Fischer minimax theorem, we know
\begin{equation*}
\begin{aligned}
\nu_{\min} &\defi  \lambda_{\min}(A,T) = 1 \leq \lambda_{\min}(Q^{\Ttran}AQ,Q^{\Ttran}TQ) = \lambda_{\min}(B,P) \defi \xi_{\min},\\
\nu_{\max} &\defi  \lambda_{\max}(A,T) = \iota_{\nu}  \geq \lambda_{\max}(Q^{\Ttran}AQ,Q^{\Ttran}TQ) = \lambda_{\max}(B,P)\defi \xi_{\max}.
\end{aligned}
\end{equation*}
As shown \Cref{corCvxP},
up to the first order of $\rho_{q} - \lambda_{1}$, the 
parameters $\mu_{P}$ and $L_{P}$ for the convexity of
the function $\phi$ in \Cref{convexP} are
\begin{equation} \label{defmuLNE}
\begin{aligned}
\mu_{P} & = 2\xi_{\min}\Bigl(1-\frac{\lambda_{1}}{\lambda_{2}}\Bigr)
\geq \frac{2(\omega-1)}{\omega} +\order(\rho_{q}-\lambda_{1}), \\
L_{P} & = 2\xi_{\max}\Bigl(1-\frac{\lambda_{1}}{\lambda_{n}}\Bigr)
	\leq \frac{2\iota_{\nu}(\omega^{n-1}-1)}{\omega^{n-1}}+\order(\rho_{q}-\lambda_{1}).
\end{aligned}
\end{equation}
Then, the condition number of the auxiliary function $\phi$ in the $P$-inner product is
\begin{equation*}
\kappa_{P} = \frac{L_{P}}{\mu_{P}}
\leq  \frac{\omega^{n-1}-1}{\omega^{n-2}(\omega-1)} \iota_{\nu}+\order(\rho_{q}-\lambda_{1}).
\end{equation*}
For fixed $\omega$ and $n$, neglecting the high order terms, the condition number $\kappa_{P}$ is bounded by
\begin{equation} \label{NEkappa1}
\iota_{\nu} \leq \kappa_{P} \leq \Bigl(1+\frac{1}{\omega}\Bigr)\iota_{\nu}.
\end{equation}
Therefore, we can modify $\iota_{\nu}$ to adjust the ratio of the largest and smallest generalized eigenvalue of the matrix pencil $(A,T)$, \ie $\iota_{\nu}$, for different condition number $\kappa_{P}$ of the convex function $\phi$.

Let $\epsilon_{k}=f(x_{k})-\lambda_{1}$, where $\{x_{k}\}$ are iteration points. By the rate of convergence for EPIC in \Cref{eq:rateprecond}, 
with same initial value $x_{0}$ and stopping criteria $\epsilon_{k}\leq \epsilon_{*}$, we have
\begin{equation} \label{NEkappa2}
	\ln\Bigl(\frac{\epsilon_{*}}{2\epsilon_{0}}\Bigr)\leq m_{P}\ln(1-\tau_{P}),
\end{equation}
where $m_{P}$ is the iteration number of EPIC until convergence. 
When the step--size is chosen as $\tau_{P}=\kappa_{P}^{-1/2}$, by the first order expansion of $\ln(1-\tau_{P})$ and \Cref{NEkappa1}, we obtain
\begin{equation} \label{NEkappa3}
-\ln(1-\tau_{P}) \approx \tau_{P}
= \kappa_{P}^{-1/2}\approx \iota_{\nu}^{-1/2}.
\end{equation}
Combining \Cref{NEkappa2,NEkappa3}, we know the relationship between $\iota_{\nu}$ and $m_{P}$ should be
\begin{equation} \label{NEkappa}
m_{P} \leq \dfrac{\ln\bigl(\frac{\epsilon_{*}}{2\epsilon_{0}}\bigr)}{-\ln(1-\tau_{P})} \leq C\iota_{\nu}^{1/2}\ln\Bigl(\frac{\epsilon_{*}}{2\epsilon_{0}}\Bigr),
\end{equation}
where $C$ is an absolute constant from the approximation \Cref{NEkappa3}. With \Cref{NEkappa}, we could expect the iteration number of EPIC
will increase in the order $\iota_{\nu}^{1/2}$.

For numerical examples, we set $n=512$ and $\omega^{n-1}=10^{10}$.
In this case, the eigenvalue problem is highly ill-conditioned
since the ratio of spectral spread and spectral gap is large: 
\begin{equation*}
\frac{\lambda_{n}-\lambda_{1}}{\lambda_{2}-\lambda_{1}}
= \frac{\omega^{n-1}-1}{\omega-1}\geq \omega^{n-2}\geq 10^{9}.
\end{equation*}
The vector $q$ for the auxiliary problem is constructed as
\begin{equation*}
 q = \eta[1,(\omega-1)^{2},\dotsc,(\omega-1)^{2n-2}]^{\Ttran},
\end{equation*}
where $\eta$ is a normalization parameter such that $\norm{q}=1$. In this case, the vector $q$ is super close to $u_{1}$ since $\rho_{q}-\lambda_{1}\approx 2\times 10^{-7}$.
In EPIC, the initial vector is chosen by $x_{0}=q$, the step--size $\tau_{P}$ is set as $\tau_{P}=\kappa_{P}^{-1/2}$, where $\kappa_{P}=L_{P}/\mu_{P}$ and the parameters $\mu_{P}$ and $L_{P}$ are selected by dropping the first order term of $\rho_{q}-\lambda_{1}$ in \Cref{defmuLNE}.
The stopping criteria are set as when the relative errors of approximate
eigenvalue are less than $10^{-14}$. 
The numerical results depicted in
\Cref{tabVarCon} are for
the parameters $\iota_{\nu} = (10k)^{2}$ with $k=1,2,\dotsc,12$.  The theoretical relationship between $\iota_{\nu}^{1/2}$ and iteration numbers $m_{P}$ in \Cref{NEkappa} is validated.

\begin{table}[htbp]
	\caption{Iteration numbers for different preconditioners.}
	\label{tabVarCon}
	\centering
	\begin{tabular}{|c|c|c|c|c|c|c|c|c|c|c|c|c|}
		\hline 
		$\iota_{\nu}^{1/2}$ &10 & 20 & 30 & 40 & 50 & 60 & 70 & 80 & 90 & 100 & 110 & 120 \\ \hline 
		$\#$ iter & 170 & 330 & 476 & 618 & 759 & 929 & 1074 & 1217 & 1351 & 1481 & 1612 & 1744\\ \hline
	\end{tabular}
\end{table}

%\newpage
\subsection{Test matrice from SuiteSparse Matrix Collection}
In this part, we compare the numerical behaviors between EPIC and LOPCG with test matrices $(A,M)$ listed in \Cref{testmat}. These matrices are from SuiteSparse Matrix Collection \cite{Davis2011}.

\begin{table}[htbp]
\centering
\caption{A list of test matrices.} \label{testmat}
\subfloat[standard eigenvalue problem $Au =u \lambda $]{
\begin{tabular}{cccc} \hline
Matrix          & Size   & nnz     & Application                          \\ \hline
\texttt{2cubes\_sphere}  & 101492 & 1647264 & Electromagnetics                     \\
\texttt{boneS01}  & 127224  & 5516602	  & Model Reduction Problem \\
\texttt{Dubcova3} & 146689  & 3636643  & 2D/3D Problem \\ 
\texttt{finan512}  & 74752  & 596992  & Economic \\ 
\texttt{G2\_circuit} & 150102  & 726674  & Circuit Simulation Problem \\ \hline
\end{tabular}} 

\medskip
\subfloat[generalized eigenvalue problem $A u =  Mu\lambda$]{
\begin{tabular}{cccc} \hline
Matrix              & Size & nnz        & Application             \\ \hline
(\texttt{bcsstk09},\texttt{bcsstm09}) & 1083 & (18437,1083)   & Structural Problem  \\
(\texttt{bcsstk21},\texttt{bcsstm21}) & 3600 & (26600,3600)   & Structural Problem  \\
(\texttt{Kuu},\texttt{Muu})           & 7102 & (340200,340200)& Structural Problem \\ \hline
\end{tabular}}
\end{table}

The vector $q$ is chosen as a random Gaussian vector with normalization. For both two methods, the initial vectors are set as $x_{0}=q$. 
Since the choice of $q$ will affect the behavior of EPIC, and the possibility of a random Gaussian vector satisfying the condition in \Cref{convexP} is extremely low, a restart strategy will be applied to EPIC. Specifically, when $\abs{x_{k}^{\Ttran}Mq}<0.5$, we will restart EPIC with $q=x_{k}$.

For the co--preconditioner $T$, we employ the aggregation-based algebraic multigrid preconditioner \cite{Notay2010}.
Different from the previous experiment, less attention will be paid to the choice of $\mu$ and $L$ in EPIC. We just set $\mu=L=6$ for all test matrices.

The stopping criteria of EPIC and LOPCG are chosen as when the relative errors of approximate eigenvalue are less than $10^{-8}$, \ie $\rho_{k}-\lambda_{1}\leq 10^{-8}\lambda_{1}$, where $\lambda_{1}$ is computed from Matlab's built-in function \texttt{eigs}. 

Numerical results are depicted in \Cref{ConHisSS,IterNumSS}.
We can see that the convergence histories of EPIC and LOPCG are very close, for both Rayleigh quotient and the components in $u_{1}$. 
In terms of the elapsed time per iteration, EPIC is slightly longer than LOPCG. 
We observe that the restart of EPIC only happens in the very early stage. 
For some hard example, such as \texttt{boneS01}, EPIC performs much better than LOPCG. 

\begin{figure}[htbp]
\centering
\subfloat[\texttt{2cubes\_sphere}]{
\includegraphics[width=\figsizeF]{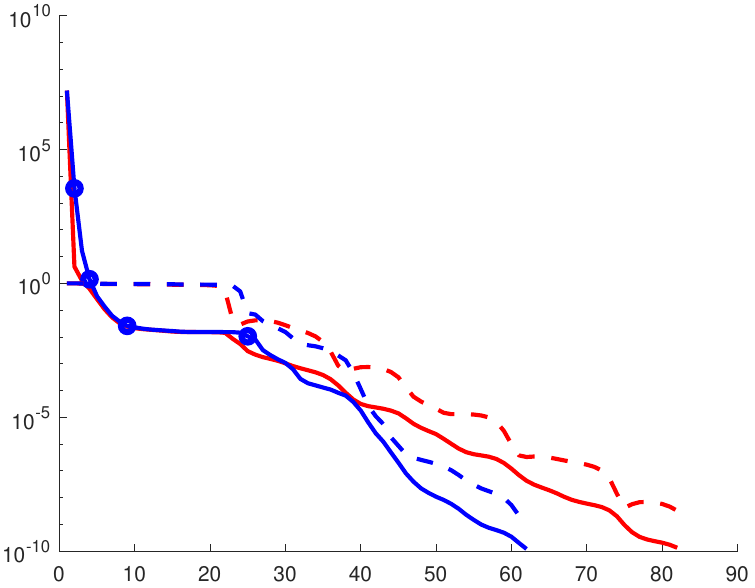}}
\subfloat[\texttt{boneS01}]{
\includegraphics[width=\figsizeF]{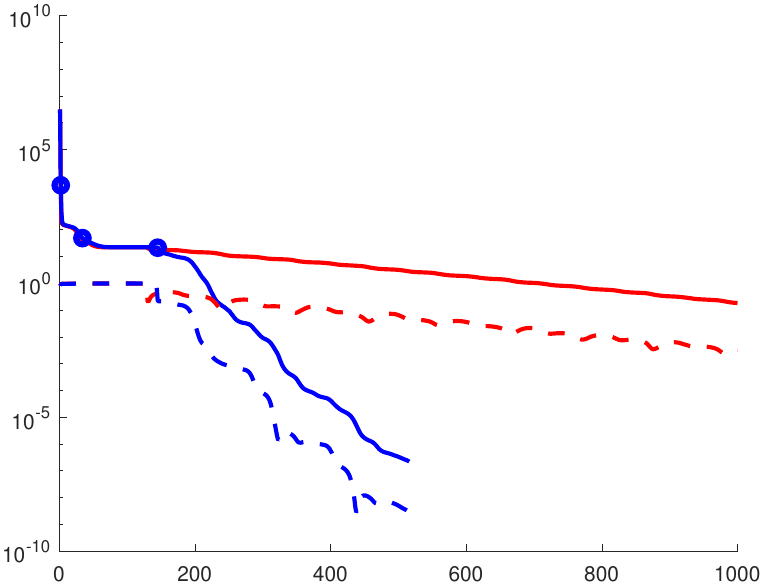}}
\subfloat[\texttt{Dubcova3}]{
\includegraphics[width=\figsizeF]{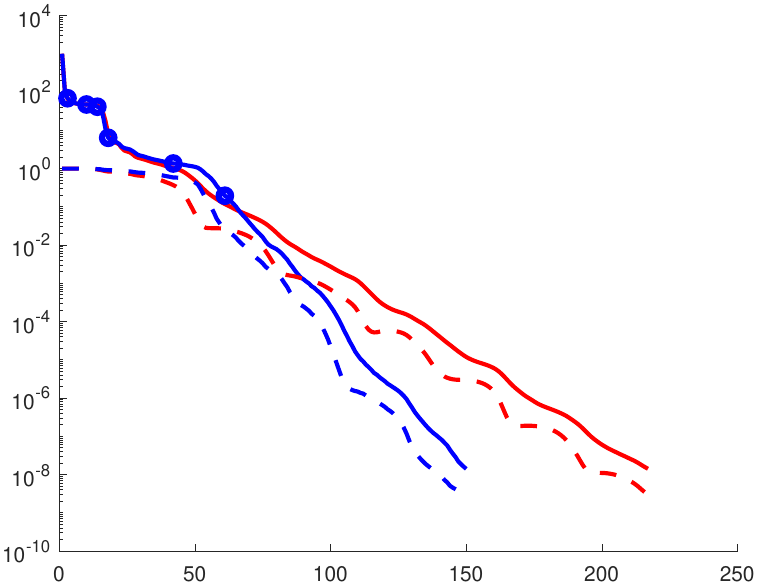}}
\subfloat[\texttt{finan512}]{
\includegraphics[width=\figsizeF]{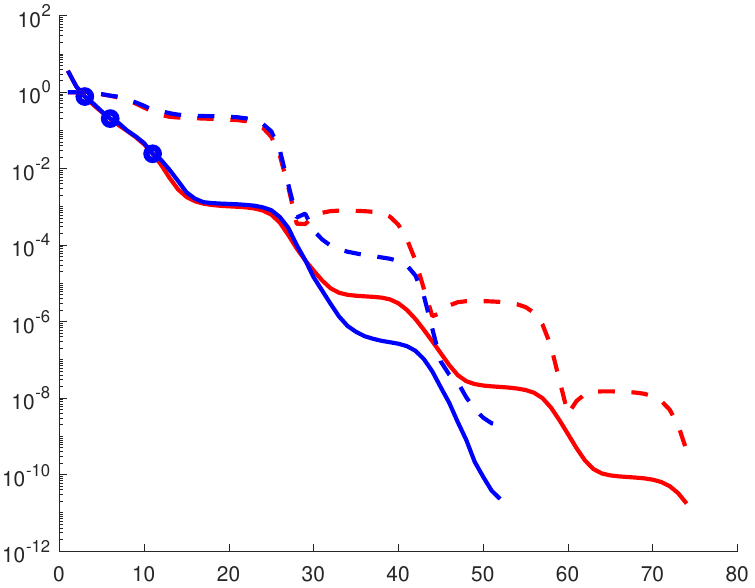}}\\ 

\subfloat[\texttt{G2\_circuit}]{
\includegraphics[width=\figsizeF]{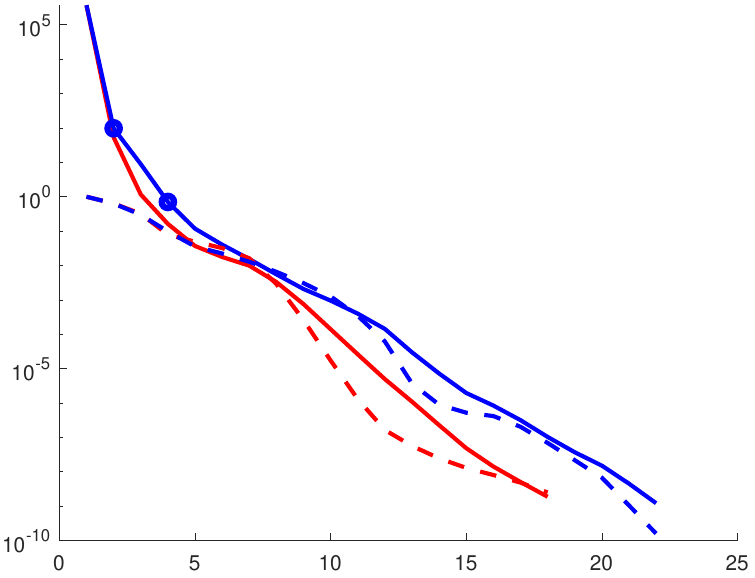}} 
\subfloat[\texttt{bcsst(k/m)09}]{
\includegraphics[width=\figsizeF]{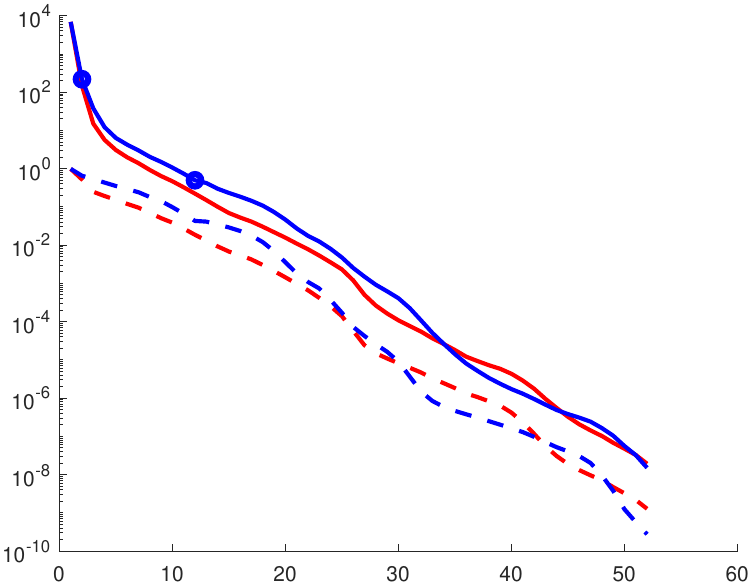}} 
\subfloat[\texttt{bcsst(k/m)21}]{
\includegraphics[width=\figsizeF]{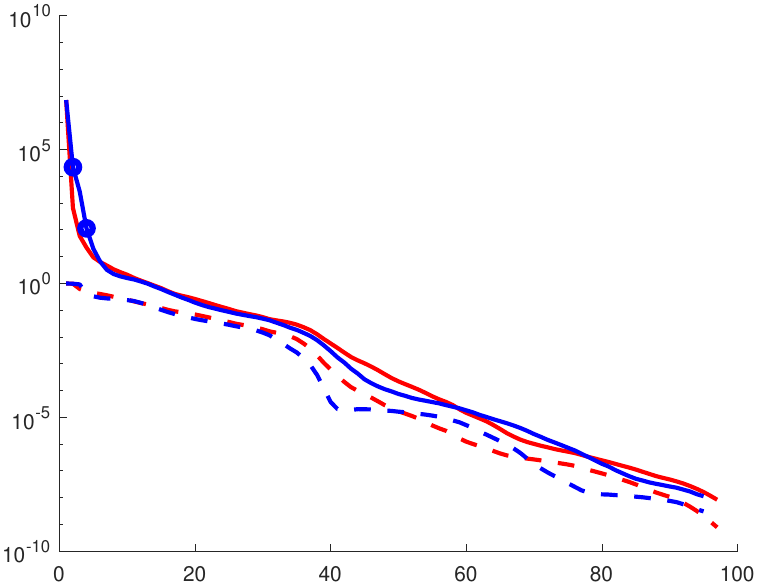}}
\subfloat[(K/M)uu]{
\includegraphics[width=\figsizeF]{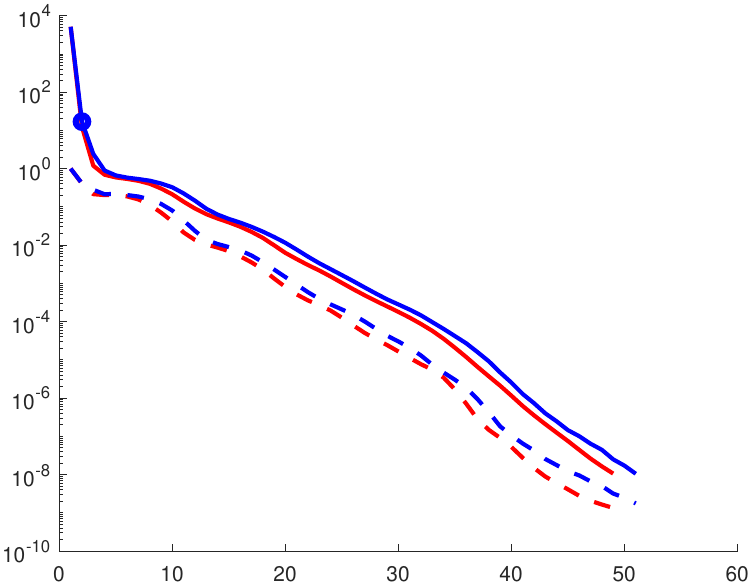}}
\caption{Convergence history of LOPCG (red) and EPIC (blue). The $x$-axis is the iterations number. The solid lines are the relative errors of approximate smallest eigenvalues, and the dashed lines are $1-\abs{x_{k}^{\Ttran}Mu_{1}}$, where $x_{k}$ and $u_{1}$ are both $M$--normalized. The restart points are marked by circle.} 
\label{ConHisSS}
\end{figure}

\begin{table}[htbp]
\caption{Iteration numbers and elapsed times. For the matrix \texttt{boneS01}, LOPCG does not converge in 1000 iterations.}
\label{IterNumSS}
\centering
\begin{tabular}{c|cc|cc} \hline
	Matrix              & LOPCG & time (\texttt{s})       & EPIC  & time (\texttt{s})     \\ \hline
	\texttt{2cubes\_sphere}      & 82 & 1.8985 & 62 & 1.6748 \\
	\texttt{boneS01}          & $\times$ & $\times$ & 516 & 27.0412 \\
	\texttt{Dubcova3}            & 217 & 7.7819 & 150 & 6.2790 \\
	\texttt{finan512}    & 74 & 0.9762 & 52 & 0.7907  \\  
	\texttt{G2\_circuit}  & 18 & 0.4572 & 22 & 0.6808     \\ \hline
	(\texttt{bcsstk09},\texttt{bcsstm09})& 52 & 0.1154 & 52 & 0.1209\\
	(\texttt{bcsstk21},\texttt{bcsstm21}) & 97 & 0.2354 & 95 & 0.2487\\
	(\texttt{Kuu},\texttt{Muu})          & 49 & 0.2667 & 51 & 0.3166 \\ \hline
	\end{tabular}
\end{table}

%-------------------------------------------------------

\section{Concluding remarks}
\label{sec:conclude} 
We introduced the concept of implicit convexity of the symmetric eigenvalue problem \eqref{evp}. A symmetric Eigensolver based on Preconditioning and Implicit Convexity (EPIC) with provable acceleration is proposed. Numerical results verify the theoretical rate of the convergence of the EPIC and show the similar rates of the convergence for EPIC and LOPCG for a set of test matrices from applications.

There are two research directions for future work. One is how to develop a parameter--free variant similar to LOPCG, and the other one is the development of a block version of the EPIC. %Moreover, it would be very interesting to study whether there are similar implicit convexity structures in nonlinear symmetric eigenvalue problems.

\section*{Acknowledgments}
We thank the helpful discussion with Long Chen of UC Irvine. Part of this work was performed when the first author Shao was at School of Mathematical Sciences, Fudan University.

\bibliographystyle{siamplain}
\bibliography{ref}

\begin{thebibliography}{10}

\bibitem{Alimisis2022}
{\sc F.~Alimisis and B.~Vandereycken}, {\em Geodesic convexity of the symmetric eigenvalue problem and convergence of {Riemannian} steepest descent}, arXiv preprint arXiv:2209.03480,  (2022), \url{https://doi.org/10.48550/arXiv.2209.03480}.

\bibitem{Argentati2017}
{\sc M.~E. Argentati, A.~V. Knyazev, K.~Neymeyr, E.~E. Ovtchinnikov, and M.~Zhou}, {\em Convergence theory for preconditioned eigenvalue solvers in a nutshell}, Foundations of Computational Mathematics, 17 (2017), pp.~713--727, \url{https://doi.org/10.1007/s10208-015-9297-1}.

\bibitem{Bai2000}
{\sc Z.~Bai, J.~Demmel, J.~Dongarra, A.~Ruhe, and H.~van~der Vorst}, eds., {\em {Templates for the Solution of Algebraic Eigenvalue Problems: A Practical Guide}}, SIAM, Philadelphia, 2000, \url{https://doi.org/10.1137/1.9780898719581}.

\bibitem{Benzi2002}
{\sc M.~Benzi}, {\em Preconditioning techniques for large linear systems: a survey}, Journal of Computational Physics, 182 (2002), pp.~418--477, \url{https://doi.org/10.1006/jcph.2002.7176}.

\bibitem{Chen2022}
{\sc W.~Chen, N.~Shao, and X.~Xu}, {\em A locally optimal preconditioned {Newton-Schur} method for symmetric elliptic eigenvalue problems}, Mathematics of Computation, 92 (2023), pp.~2655--2684, \url{https://doi.org/10.1090/mcom/3860}.

\bibitem{Davis2011}
{\sc T.~A. Davis and Y.~Hu}, {\em The {University of Florida} sparse matrix collection}, ACM Transactions on Mathematical Software (TOMS), 38 (2011), pp.~1--25, \url{https://doi.org/10.1145/2049662.2049663}.

\bibitem{Duersch2018}
{\sc J.~A. Duersch, M.~Shao, C.~Yang, and M.~Gu}, {\em A robust and efficient implementation of {LOBPCG}}, SIAM Journal on Scientific Computing, 40 (2018), pp.~C655--C676, \url{https://doi.org/10.1137/17M1129830}.

\bibitem{Dyakonov1996}
{\sc E.~G. D’yakonov}, {\em Optimization in solving elliptic problems}, CRC Press, Boca Raton, 1996, \url{https://doi.org/10.1201/9781351075213}.

\bibitem{Edelman1998}
{\sc A.~Edelman, T.~A. Arias, and S.~T. Smith}, {\em The geometry of algorithms with orthogonality constraints}, SIAM journal on Matrix Analysis and Applications, 20 (1998), pp.~303--353, \url{https://doi.org/10.1137/S0895479895290954}.

\bibitem{Golub2013}
{\sc G.~Golub and C.~F. Van~Loan}, {\em Matrix Computations}, The Johns Hopkins University Press, Maryland, 4th~ed., 2013.

\bibitem{Knyazev1991}
{\sc A.~Knyazev and A.~Shorokhodov}, {\em On exact estimates of the convergence rate of the steepest ascent method in the symmetric eigenvalue problem}, Linear algebra and its applications, 154 (1991), pp.~245--257, \url{https://doi.org/10.1016/0024-3795(91)90379-B}.

\bibitem{Knyazev1998}
{\sc A.~V. Knyazev}, {\em Preconditioned eigensolvers - an oxymoron?}, Electronic Transactions on Numerical Analysis, 7 (1998), pp.~104--123.

\bibitem{Knyazev2001}
{\sc A.~V. Knyazev}, {\em Toward the optimal preconditioned eigensolver: locally optimal block preconditioned conjugate gradient method}, SIAM Journal on Scientific Computing, 23 (2001), pp.~517--541, \url{https://doi.org/10.1137/S1064827500366124}.

\bibitem{Knyazev2007}
{\sc A.~V. Knyazev, M.~Argentati, I.~Lashuk, and E.~Ovtchinnikov}, {\em Block locally optimal preconditioned eigenvalue xolvers {(BLOPEX)} in {Hypre} and {PETSc}}, SIAM Journal on Scientific Computing, 29 (2007), pp.~2224--2239, \url{https://doi.org/10.1137/060661624}.

\bibitem{Knyazev2003}
{\sc A.~V. Knyazev and K.~Neymeyr}, {\em A geometric theory for preconditioned inverse iteration {III}: A short and sharp convergence estimate for generalized eigenvalue problems}, Linear Algebra and its Applications, 358 (2003), pp.~95--114, \url{https://doi.org/10.1016/S0024-3795(01)00461-X}.

\bibitem{Knyazev1994}
{\sc A.~V. Knyazev and A.~L. Skorokhodov}, {\em Preconditioned gradient-type iterative methods in a subspace for partial generalized symmetric eigenvalue problems}, SIAM Journal on Numerical Analysis, 31 (1994), pp.~1226--1239, \url{https://doi.org/10.1137/0731064}.

\bibitem{Luo2021}
{\sc H.~Luo and L.~Chen}, {\em From differential equation solvers to accelerated first-order methods for convex optimization}, Mathematical Programming,  (2021), pp.~1--47, \url{https://doi.org/10.1007/s10107-021-01713-3}.

\bibitem{Muehlebach2021}
{\sc M.~Muehlebach and M.~Jordan}, {\em Optimization with momentum: Dynamical, control-theoretic, and symplectic perspectives.}, Journal of Machine Learning Research, 22 (2021), pp.~1--50, \url{http://jmlr.org/papers/v22/20-207.html}.

\bibitem{Nesterov1983}
{\sc Y.~Nesterov}, {\em A method for solving the convex programming problem with convergence rate {$\mathcal{O} (1/k^{2})$}}, Soviet Mathematics Doklady, 269 (1983), pp.~543--547.

\bibitem{Nesterov2018}
{\sc Y.~Nesterov}, {\em Lectures on Convex Optimization}, Springer Nature, Cham, 2018, \url{https://doi.org/10.1007/978-3-319-91578-4}.

\bibitem{Neymeyr2012}
{\sc K.~Neymeyr}, {\em A geometric convergence theory for the preconditioned steepest descent iteration}, SIAM Journal on Numerical Analysis, 50 (2012), pp.~3188--3207, \url{https://doi.org/10.1137/11084488X}.

\bibitem{Nocedal2006}
{\sc J.~Nocedal and S.~J. Wright}, {\em Numerical Optimization}, Springer-Verlag, New York, 2006, \url{https://doi.org/10.1007/978-0-387-40065-5}.

\bibitem{Notay2002}
{\sc Y.~Notay}, {\em Combination of {Jacobi--Davidson} and conjugate gradients for the partial symmetric eigenproblem}, Numerical Linear Algebra with Applications, 9 (2002), pp.~21--44, \url{https://doi.org/10.1002/nla.246}.

\bibitem{Notay2010}
{\sc Y.~Notay}, {\em An aggregation-based algebraic multigrid method}, Electronic Transactions on Numerical Analysis, 37 (2010), pp.~123--146.

\bibitem{Ovtchinnikov2006}
{\sc E.~E. Ovtchinnikov}, {\em Sharp convergence estimates for the preconditioned steepest descent method for {Hermitian} eigenvalue problems}, SIAM Journal on Numerical Analysis, 43 (2006), pp.~2668--2689, \url{https://doi.org/10.1137/040620643}.

\bibitem{Odonoghue2015}
{\sc B.~O’donoghue and E.~Candes}, {\em Adaptive restart for accelerated gradient schemes}, Foundations of Computational Mathematics, 15 (2015), pp.~715--732, \url{https://doi.org/10.1007/s10208-013-9150-3}.

\bibitem{Park2021}
{\sc J.-H. Park, A.~J. Salgado, and S.~M. Wise}, {\em Preconditioned accelerated gradient descent methods for locally lipschitz smooth objectives with applications to the solution of nonlinear pdes}, Journal of Scientific Computing, 89 (2021), pp.~1--37, \url{https://doi.org/10.1007/s10915-021-01615-8}.

\bibitem{Parlett1998}
{\sc B.~N. Parlett}, {\em The Symmetric Eigenvalue Problem}, SIAM, Philadelphia, 1998, \url{https://doi.org/10.1137/1.9781611971163}.

\bibitem{Polyak1964}
{\sc B.~T. Polyak}, {\em Some methods of speeding up the convergence of iteration methods}, Ussr Computational Mathematics and Mathematical Physics, 4 (1964), pp.~1--17, \url{https://doi.org/10.1016/0041-5553(64)90137-5}.

\bibitem{Saad2011}
{\sc Y.~Saad}, {\em Numerical methods for large eigenvalue problems: revised edition}, SIAM, Philadelphia, 2011, \url{https://doi.org/10.1137/1.9781611970739}.

\bibitem{Samokish1958}
{\sc B.~Samokish}, {\em The steepest descent method for an eigenvalue problem with semi-bounded operators}, Izv. Vyssh. Uchebn. Zaved. Mat, 5 (1958), pp.~105--114.

\bibitem{Shao2023b}
{\sc N.~Shao and W.~Chen}, {\em Riemannian acceleration with preconditioning for symmetric eigenvalue problems}, arXiv preprint 2309.05143,  (2023), \url{https://doi.org/10.48550/arXiv.2309.05143}.

\bibitem{Shi2021}
{\sc B.~Shi, S.~S. Du, M.~Jordan, and W.~J. Su}, {\em Understanding the acceleration phenomenon via high-resolution differential equations}, Mathematical Programming,  (2021), pp.~1--70, \url{https://doi.org/10.1007/s10107-021-01681-8}.

\bibitem{Stewart2011}
{\sc G.~Stewart}, {\em On the numerical analysis of oblique projectors}, SIAM Journal on Matrix Analysis and Applications, 32 (2011), pp.~309--348, \url{https://doi.org/10.1137/100792093}.

\bibitem{Su2014}
{\sc W.~Su, S.~Boyd, and E.~Candes}, {\em A differential equation for modeling {Nesterov’s} accelerated gradient method: Theory and insights}, in Advances in Neural Information Processing Systems, vol.~27, 2014, \url{https://proceedings.neurips.cc/paper/2014/file/f09696910bdd874a99cd74c8f05b5c44-Paper.pdf}.

\bibitem{Wathen2015}
{\sc A.~J. Wathen}, {\em Preconditioning}, Acta Numerica, 24 (2015), pp.~329--376, \url{https://doi.org/10.1017/S0962492915000021}.

\bibitem{Wilkinson1965}
{\sc J.~H. Wilkinson}, {\em The Algebraic Eigenvalue Problem}, Clarendon Press, Oxford, 1965, \url{https://doi.org/10.1017/S0013091500012104}.

\bibitem{Zhou2023}
{\sc M.~Zhou, Z.~Bai, Y.~Cai, and K.~Neymeyr}, {\em Convergence analysis of a block preconditioned steepest descent eigensolver with implicit deflation}, Numer. Linear Algebra and Appl., 30 (2023), p.~e2498, \url{https://doi.org/10.1002/nla.2498}.

\bibitem{Zhou2019}
{\sc M.~Zhou and K.~Neymeyr}, {\em Cluster robust estimates for block gradient-type eigensolvers}, Mathematics of Computation, 88 (2019), pp.~2737--2765, \url{https://doi.org/10.1090/mcom/3446}.

\end{thebibliography}

\end{document}